\documentclass[a4paper,12pt,oneside]{article}
\setlength{\topmargin}{-10mm}
\setlength{\oddsidemargin}{5mm}
\setlength{\evensidemargin}{5mm}
\setlength{\textwidth}{160truemm}
\setlength{\textheight}{240truemm}
\setlength{\parindent}{0mm} 
\setlength{\parskip}{0ex} 
\setlength{\baselineskip}{0pt} 
\usepackage[utf8]{inputenc}
\usepackage[english]{babel}
\usepackage{amsmath}
\usepackage{amsthm}
\usepackage{amssymb}
\usepackage{mathtools}
\usepackage{tikz}
\usepackage{verbatim}
\usepackage{fancyvrb}
\usepackage{array}
\usepackage{adjustbox}
\usepackage{changepage}
\usepackage{subcaption}
\usepackage{breqn}
\usepackage{enumerate}
\usepackage{wrapfig}
\usepackage{bbm}
\usepackage{bm}
\usepackage{enumitem}
\usepackage{csquotes}
\usepackage{hyperref}
\usepackage{graphicx}
\theoremstyle{definition}
\newtheorem{defi}{Definition}[section] 
\newtheorem{thm}[defi]{Theorem} 
\newtheorem{lem}[defi]{Lemma}

\newtheorem{prop}[defi]{Proposition}

\newtheorem{cor}[defi]{Corollary}

\newcommand{\N}{\mathbb{N}}
\newcommand{\Z}{\mathbb{Z}}

\newcommand{\C}{\mathbb{C}}
\newcommand{\R}{\mathbb{R}}

\newcommand{\BO}{\mathcal{O}}
\title{Multiple orthogonal polynomials associated with the exponential integral}
\author{Walter Van Assche\footnote{Department of Mathematics, KU Leuven, Celestijnenlaan 200B box 2400, BE-3001 Leuven, Belgium. Supported by FWO project G.0C9819N.
$\{$thomas.wolfs,walter.vanassche$\}$@kuleuven.be}  \and Thomas Wolfs\footnotemark[1]}

\begin{document}
\date{\today}
\maketitle

\begin{abstract}
We introduce a new family of multiple orthogonal polynomials satisfying orthogonality conditions with respect to two weights $(w_1,w_2)$ on the positive real line, with $w_1(x)=x^\alpha e^{-x}$ the gamma density and $w_2(x) = x^\alpha E_{\nu+1}(x)$ a density related to the exponential integral $E_{\nu+1}$. We give explicit formulas for the type I functions and type II polynomials, their Mellin transform, Rodrigues formulas, hypergeometric series and recurrence relations. We determine the asymptotic distribution of the (scaled) zeros of the type II multiple orthogonal polynomials and make a connection to random matrix theory. Finally, we also consider a related family of mixed type multiple orthogonal polynomials.
\medbreak

\textbf{Keywords:} multiple orthogonal polynomials, exponential integral, Mellin transform, random matrices

\end{abstract}

\section{Introduction}

Multiple orthogonal polynomials extend the notion of (regular) orthogonal polynomials by distributing orthogonality conditions over several measures instead of just one measure. Each multiple orthogonal polynomial corresponds to a multi-index $\vec{n}=(n_1,\dots,n_r)$ in $\Z_{\geq 0}^r$ that determines the way in which the associated orthogonality conditions are distributed. There are two main types of multiple orthogonal polynomials, see for example \cite[Chapter~23]{Ismail} and \cite[Chapter 4]{NikiSor} for an introduction.
\medbreak

Let $\mu_1,\dots,\mu_r$ be $r$ positive Borel measures, with finite moments, supported on the real line. We start by introducing the type I multiple orthogonal polynomials. These are vectors of polynomials $(A_{\vec{n},1},\dots,A_{\vec{n},r})$, where the degree of $A_{\vec{n},j}$ is at most $n_j-1$, that satisfy the orthogonality conditions
	$$\sum_{j=1}^r \int_{\R} A_{\vec{n},j}(x) x^k d\mu_j(x)=0,\quad k=0,\dots,\left|\vec{n}\right|-2.$$
The orthogonality conditions result in a system of $\left|\vec{n}\right|-1$ homogeneous linear equations for the $\left|\vec{n}\right|$ coefficients of the $A_{\vec{n},j}$. The existence of non-zero vectors of polynomials is therefore guaranteed. Another way to set up the orthogonality conditions is via the type II multiple orthogonal polynomials. These are polynomials $P_{\vec{n}}$ of degree at most $\left|\vec{n}\right|=n_1+\dots n_r$ that satisfy the orthogonality conditions
		$$\int_{\R} P_{\vec{n}}(x) x^k d\mu_j(x)=0,\quad k=0,\dots,n_j-1, \ j=1,\dots,r.$$
The orthogonality conditions result in a system of $\left|\vec{n}\right|$ homogeneous linear equations for the $\left|\vec{n}\right|+1$ coefficients of $P_{\vec{n}}$. The existence of non-zero polynomials is therefore again guaranteed. 
It is also possible to combine type I and type II orthogonality conditions in order to obtain so-called mixed type multiple orthogonal polynomials. They appear in the work of Sorokin \cite{Sor_mixed} and in the study of non-intersecting Brownian motions with several starting points and endpoints \cite{DaemsKuijl}. Later, they were also studied in \cite{AF-FP-M,FP-LG-LL-S,F-MP-MC}. Other applications of multiple orthogonal polynomials can be found in analytic number theory (Diophantine approximation), for example in Apéry's proof \cite{Apery} of the irrationality of $\zeta(3)$ as explained by Beukers in \cite{Beukers}, or in random matrix theory \cite{Kuijlaars1,Kuijlaars2}.
\medbreak

In \cite{Mahler}, Mahler introduced the notion of a perfect system. A system of measures is said to be perfect if, for all multi-indices, every type I and type II polynomial attains its maximal degree. In that case, the multiple orthogonal polynomials are uniquely determined by the orthogonality conditions up to a non-zero scalar multiplication. The system of equations then has a unique solution if we add a normalization condition. For the type I polynomials, we can impose that additionally
    $$ \sum_{j=1}^r \int_{\R} A_{\vec{n},j}(x) x^{\left|\vec{n}\right|-1} d\mu_j(x)=1. $$
The (unique) type I polynomial $(A_{\vec{n},1},\dots,A_{\vec{n},r})$ for which this holds will be denoted by $(A_{\vec{n},1}^{(I)},\dots,A_{\vec{n},r}^{(I)})$. We can normalize the type II polynomials by demanding that they are monic. The (unique) monic type II polynomial $P_{\vec{n}}$ will be denoted by $P_{\vec{n}}^{(II)}$.

Most of the well-known examples of perfect systems arise via the so-called Angelesco systems and AT-systems. Angelesco systems were introduced in \cite{Ang} and can be produced by considering 
measures supported on intervals that may touch at the endpoints but are otherwise disjoint. For an AT-system, one considers multiple measures supported on the same interval that are sufficiently different. Particular interest goes to the so-called Nikishin systems, see \cite[Chapter 5, \S 7]{NikiSor}, where this is called an MT-system, and \cite{FidLop1,FidLop2} where it is proven that Nikishin systems are perfect.
\medbreak

We will investigate a family of multiple orthogonal polynomials that will be an extension of the Laguerre polynomials, a family of classical orthogonal polynomials. Laguerre polynomials arise by considering orthogonality conditions with respect to the gamma density $x^\alpha e^{-x}$, where $\alpha>-1$, on $[0,\infty)$, see, e.g., \cite[Chapter 5]{Szego} for some properties. Several extensions of these polynomials have been studied before. For example, one can form an Angelesco system by considering the weights $x^\alpha e^{-\left|x\right|}$ or $x^\alpha e^{-x^r}$, where $\alpha>-1$, on $r$ intervals that form an $r$-star, see \cite{Sor} and \cite{LeursVA} respectively. Known extensions to AT-systems are $(x^\alpha e^{-x},x^\beta e^{-x})$, where $\alpha,\beta>-1$ and $\alpha-\beta\not\in\Z$, and $(x^\alpha e^{-c_1 x},x^\alpha e^{-c_2 x})$, where $\alpha>-1$ and $c_1,c_2>0$ are distinct, both on $[0,\infty)$. The resulting multiple orthogonal polynomials are then called multiple Laguerre polynomials of the first and second kind and were introduced in \cite{Sor} and \cite{NikiSor}. Their properties were investigated in \cite{CoussVA} and \cite{AptBranqVA}. The type I and type II multiple orthogonal polynomials in this article (Section 2 and 3 respectively) can be seen as multiple Laguerre polynomials of the third kind.
\medbreak

Observe that for the multiple Laguerre polynomials of the first and second kind the additional weight function is still a gamma density, but with different parameters. We propose to add a different kind of orthogonality weight as the second weight: one that is related to the exponential integral. The (generalized) exponential integral is defined on $\R_{>0}$ as
    $$ E_{\nu}(x) = \int_1^\infty \frac{e^{-xt}}{t^{\nu}} dt, $$
for every $\nu\in\R$. This function is connected to the incomplete gamma function, given by 
$$\Gamma_{\nu}(x) = \int_x^\infty t^{\nu-1} e^{-t} dt,$$
via the relation $ E_{\nu}(x) = x^{\nu-1} \Gamma_{-\nu+1}(x) $. Orthogonal polynomials with respect to the exponential integral have been investigated before via a numerical approach in \cite{Gautschi}. We believe that it is more natural to consider multiple orthogonality and include a related gamma density as a weight in the system as well.
\medbreak


We will consider a system of two weights $(w_1,w_2)$ on $[0,\infty)$ with
    $$ w_1(x)=x^{\alpha} e^{-x},\quad w_2(x)=x^{\alpha}E_{\nu+1}(x) .$$
In order for all the moments of $w_1$ and $w_2$ to exist, we will assume that $\alpha>-1$ and $\alpha+\nu > -1$. The moments are then given by
\begin{equation*}
    \int_0^\infty x^k w_1(x) dx = \Gamma(k+\alpha+1), \quad \int_0^\infty x^k w_2(x) dx = \frac{\Gamma(k+\alpha+1)}{k+\alpha+\nu+1},
\end{equation*}
where 
$$\Gamma(s)=\int_0^\infty x^{s-1} e^{-x} dx$$ 
denotes the gamma function. We will also exclude $\nu\in\Z_{\leq -1}$ because for these values the weights will not be linearly independent over the polynomials and the system won't be perfect.
\medbreak

The described system is perfect if $\nu>-1$, because it can be written as a Nikishin system; the ratio $w_2/w_1$ is given by the Stieltjes transform of a gamma density on the negative real line. Indeed, if $\nu>-1$, we can use \cite[Eq. 3.383.10]{GR} to write
$$\frac{w_2(x)}{w_1(x)} = \frac{1}{\Gamma(\nu+1)} \int_{-\infty}^0 \frac{(-t)^{\nu} e^{t}}{x-t} dt.$$
For $\nu=-n+v\leq -1$ with $n\in\N$ and $-1<v\leq 0$, the system is close to a Nikishin system in the sense that the ratio $w_2/w_1$ will be equal to $x^{-n}$ times the Stieltjes transform of a gamma density on $(-\infty,0]$, plus a rational function with a pole of order $n$ at $0$. To show this, we can apply \cite[Eq. 8.356.5]{GR} to obtain
$$ \frac{w_2(x)}{w_1(x)} = \frac{(-v)_n}{x^n} e^{x} E_{v+1}(x) + \frac{1}{x^n} \sum_{j=0}^{n-1} \frac{x^j}{\Gamma(j-v+1)}, $$
where $(z)_n=z(z+1)\dots(z+n-1)$ denotes the Pochhammer symbol, and we can use the result in the previous case. Note that if $\nu\in\Z_{\leq -1}$ (so $v=0$) the Stieltjes transform disappears and only the rational function remains, which means that the system $(w_1,w_2)$ is not perfect as stated before.

Another property of the system of weights is that it satisfies the matrix Pearson equation
$$ \left[  x \begin{pmatrix} w_1 \\ w_2  \end{pmatrix} \right]' 
= \begin{pmatrix} \alpha+1-x & 0 \\ -1 & \alpha+\nu+1 \end{pmatrix} 
\begin{pmatrix}  w_1 \\ w_2  \end{pmatrix} . $$
\medbreak

In Section 4, we will extend the results about the type I and type II multiple orthogonal polynomials to certain mixed type multiple orthogonal polynomials. First, we will consider mixed type orthogonality with respect to the two pairs of weights $(e^{-x},E_{\nu+1}(x))$ and $(x^{\alpha},x^{\beta})$ on $[0,\infty)$. The resulting mixed type multiple orthogonal polynomials are of interest because they resemble the rational approximants used by Apéry in his proof of irrationality of $\zeta(3)$. We believe this new family of mixed type multiple orthogonal polynomials may be relevant in the rational approximation of Euler's constant $\gamma$. Afterwards, we will investigate the dual setting in which we will reverse the order of the weights. In both settings, we will assume that $\alpha,\beta,\nu,\alpha+\nu,\beta+\nu>-1$ for the same reasons as before. Additionally, we will exclude $\alpha-\beta\in\Z$ in order for the pair of weights $(x^{\alpha},x^{\beta})$ to be linearly independent over the polynomials. It is known that in that case the pair of weights forms an AT-system on $(0,\infty)$. In fact, under the additional restriction that $0<\alpha-\beta<1$, it is even a Nikishin system because of the relation (see \cite[Eq. 5.12.3]{DLMF})
    $$x^{\beta-\alpha} = \frac{1}{\Gamma(1-\alpha+\beta)\Gamma(\alpha-\beta)} \int_{-\infty}^0 \frac{(-t)^{\beta-\alpha}}{x-t} dt.$$

\section{Type I multiple orthogonal polynomials} \label{STI}
A type I multiple orthogonal polynomial for the two weight functions $(x^\alpha e^{-x},x^\alpha E_{\nu+1}(x))$ that corresponds to the multi-index $(n,m)$ is a vector of polynomials $(A_{n,m},B_{n,m})$ with $\deg A_{n,m} = n-1$ and $\deg B_{n,m} = m-1$ for which the type I function 
\begin{equation}  \label{defF}
   F_{n,m}(x) = A_{n,m}(x) e^{-x} + B_{n,m}(x) E_{\nu+1}(x)
\end{equation}  
satisfies the orthogonality conditions
\begin{equation*}
   \int_0^\infty F_{n,m}(x) x^{\alpha+k} dx = 0, \quad k=0,\dots, n+m-2.  
\end{equation*}   
Such polynomials (having maximal degree) exist for every multi-index because of the perfectness of the system and they are unique up to a non-zero scalar multiplication. We may normalize the type I function by considering the normalization condition
\begin{equation}  \label{I_norm}
   \int_0^\infty F_{n,m}(x) x^{\alpha+n+m-1} dx = 1.  
\end{equation}
The type I function for which this holds will be denoted by $F_{n,m}^{(I)}$.

\subsection{Convolution and Rodrigues formula} \label{I_S1}

We will be able to identify the type I function, and hence the underlying type I multiple orthogonal polynomials, for multi-indices with $n+1\geq m$ by careful analysis of its Mellin transform. The Mellin transform of a locally Lebesgue integrable function $f:(0,\infty)\to \C$ is defined by the relation
    $$ \hat{f}(s) = \int_0^\infty f(x) x^{s-1} dx, $$
whenever the integral exists.

\begin{lem} \label{I_Mellin_lem}
Suppose that $n+1\geq m$. The Mellin transform of the normalized type I function $F_{n,m}^{(I)}$ is given by $\mathcal{F}_{n,m} \cdot \hat{F}_{n,m}(s)$, where
\begin{equation} \label{I_Mellin}
    \hat{F}_{n,m}(s) = \frac{\Gamma(s)}{(s+\nu)_m} (\alpha+1-s)_{n+m-1},\quad s>\max\{0,-\nu\},
\end{equation}
and
\begin{equation} \label{I_norm_ct}
    \mathcal{F}_{n,m}  =  (-1)^{n+m-1} \frac{(\alpha+\nu+1)_{n+2m-1}}{(n+m-1)! (\alpha+1)_{n+m-1} (\alpha+\nu+1)_{n+m-1} \Gamma(\alpha+1)}.
\end{equation}
\end{lem}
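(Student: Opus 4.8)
The plan is to compute the Mellin transform of the type I function $F_{n,m}(x)=A_{n,m}(x)e^{-x}+B_{n,m}(x)E_{\nu+1}(x)$ directly and show that the orthogonality conditions force it into the claimed form. First I would record the two elementary Mellin transforms that underlie everything: $\int_0^\infty e^{-x}x^{s-1}dx=\Gamma(s)$ and, using the integral definition $E_{\nu+1}(x)=\int_1^\infty t^{-\nu-1}e^{-xt}dt$ together with Fubini, $\int_0^\infty E_{\nu+1}(x)x^{s-1}dx=\Gamma(s)\int_1^\infty t^{-\nu-1-s}dt=\Gamma(s)/(s+\nu)$ for $s>\max\{0,-\nu\}$. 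Consequently, if $A_{n,m}(x)=\sum_{i} a_i x^i$ and $B_{n,m}(x)=\sum_j b_j x^j$, then
\begin{equation*}
   \hat{F}_{n,m}(s)=\sum_i a_i\,\Gamma(s+i)+\sum_j b_j\,\frac{\Gamma(s+j)}{s+\nu+j}.
\end{equation*}
Writing $\Gamma(s+i)=(s)_i\Gamma(s)$ and clearing the single denominator $(s+\nu)_m$ (note $\deg B_{n,m}=m-1$, so only the factors $s+\nu,\dots,s+\nu+m-1$ occur), we see that $\hat F_{n,m}(s)=\Gamma(s)\,Q(s)/(s+\nu)_m$ for some polynomial $Q$ of degree at most $n+m-1$.

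Next I would pin down $Q$. The orthogonality conditions $\int_0^\infty F_{n,m}(x)x^{\alpha+k}dx=0$ for $k=0,\dots,n+m-2$ say exactly that $\hat F_{n,m}(s)$ vanishes at $s=\alpha+1,\alpha+2,\dots,\alpha+n+m-1$. Since $\Gamma(s)$ has no zeros and $(s+\nu)_m$ has no zeros among these points (here one uses $\alpha+\nu>-1$, and that we have excluded $\nu\in\Z_{\le -1}$, so none of $\alpha+k+\nu+j$ with $1\le k+j$ can be a nonpositive integer — actually they are all $>0$), the polynomial $Q(s)$ must vanish at all $n+m-1$ of these points. As $\deg Q\le n+m-1$, this forces $Q(s)=c\,\prod_{k=1}^{n+m-1}(s-\alpha-k)=c\,(-1)^{n+m-1}(\alpha+1-s)_{n+m-1}$ for a constant $c$. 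This already gives $\hat F_{n,m}(s)=\dfrac{c'\,\Gamma(s)}{(s+\nu)_m}(\alpha+1-s)_{n+m-1}$, proving \eqref{I_Mellin} up to the overall constant; absorbing $c'$ into the normalization constant $\mathcal F_{n,m}$ is exactly the statement of the lemma.

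Finally I would fix $\mathcal F_{n,m}$ using the normalization \eqref{I_norm}, i.e. $\hat F_{n,m}^{(I)}(\alpha+n+m)=1$, equivalently $\mathcal F_{n,m}\cdot\hat F_{n,m}(\alpha+n+m)=1$. Evaluating the right-hand side of \eqref{I_Mellin} at $s=\alpha+n+m$ gives
\begin{equation*}
   \hat F_{n,m}(\alpha+n+m)=\frac{\Gamma(\alpha+n+m)}{(\alpha+n+m+\nu)_m}\,(\alpha+1-(\alpha+n+m))_{n+m-1}
   =\frac{\Gamma(\alpha+n+m)}{(\alpha+\nu+n+m)_m}\,(1-n-m)_{n+m-1}.
\end{equation*}
Here $(1-n-m)_{n+m-1}=(-1)^{n+m-1}(n+m-1)!$, and I would rewrite $\Gamma(\alpha+n+m)=(\alpha+1)_{n+m-1}\Gamma(\alpha+1)$ and $(\alpha+\nu+n+m)_m=(\alpha+\nu+1)_{n+2m-1}/(\alpha+\nu+1)_{n+m-1}$ using the Pochhammer splitting $(a)_{p+q}=(a)_p(a+p)_q$. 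Taking the reciprocal yields \eqref{I_norm_ct} after collecting signs. I do not anticipate a serious obstacle; the one point requiring a little care is the justification that the condition $n+1\ge m$ is what guarantees $(s+\nu)_m$ genuinely divides into the partial-fraction expression with the stated degree bound on $Q$ (so that $Q$ is determined by its $n+m-1$ prescribed roots rather than being forced to vanish identically), together with checking that the excluded parameter values are precisely those making some $s+\nu+j$ hit a zero among the orthogonality nodes; both are bookkeeping with the Pochhammer symbols rather than anything deep.
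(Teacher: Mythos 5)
Your proposal is correct and follows essentially the same route as the paper: expand $A_{n,m}$ and $B_{n,m}$, compute the Mellin transform termwise, factor out $\Gamma(s)$, put everything over the common denominator $(s+\nu)_m$, use the degree bound (where $n+1\geq m$ enters) together with the $n+m-1$ zeros forced by orthogonality to identify the numerator as a multiple of $(\alpha+1-s)_{n+m-1}$, and fix $\mathcal{F}_{n,m}$ by evaluating at $s=\alpha+n+m$. Your explicit Pochhammer bookkeeping for the normalization constant is a detail the paper leaves as ``readily leads to,'' and your check that $(s+\nu)_m$ does not vanish at the orthogonality nodes is a small point the paper omits.
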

\begin{proof}
Take a type I function $F_{n,m}$ and expand the polynomials $A_{n,m}(x)$ and $B_{n,m}(x)$ as
    $$ A_{n,m}(x) = \sum_{k=0}^{n-1} a_k x^k,\quad B_{n,m}(x) = \sum_{l=0}^{m-1} b_l x^l. $$
The Mellin transform $\hat{F}_{n,m}(s)$ of $F_{n,m}(x)=A_{n,m}(x) e^{-x} + B_{n,m}(x) E_{\nu+1}(x)$ then becomes 
    $$ \hat{F}_{n,m}(s) = \sum_{k=0}^{n-1} a_k \int_0^\infty e^{-x} x^{k+s-1} dx + \sum_{l=0}^{m-1} b_l \int_0^\infty E_{\nu+1}(x) x^{l+s-1} dx, $$
which may be written as 
    $$ \sum_{k=0}^{n-1} a_k \Gamma(k+s) + \sum_{l=0}^{m-1} b_l \frac{\Gamma(l+s)}{l+s+\nu}. $$
We can take a common factor out of the sums and get
    $$ \Gamma(s) \left( \sum_{k=0}^{n-1} a_k (s)_k + \sum_{l=0}^{m-1} b_l \frac{(s)_l}{l+s+\nu} \right)$$
Writing the terms of the second sum on a common denominator then gives
$$ \Gamma(s) \left( p_{n-1}(s) + \frac{q_{2m-2}(s)}{(s+\nu)_{m}} \right)$$
in terms of polynomials $p_{n-1}(s)$ and $q_{2m-2}(s)$ with their subscript as maximal degree. Finally, we take out the denominator $(s+\nu)_{m}$ to obtain
$$ \hat{F}_{n,m}(s) = \frac{\Gamma(s)}{(s+\nu)_{m}} \left( p_{n-1}(s) (s+\nu)_{m} + q_{2m-2}(s) \right).$$
Observe that the degree of the polynomial between the parentheses is less than or equal to $\max\{n+m-1,2m-2\}$. The restriction $m-1\leq n$ now implies that this maximum is always equal to $n+m-1$. On the other hand, we know that $\hat{F}_{n,m}(s)$ has zeros at the $n+m-1$ values $s=\alpha+1,\dots,\alpha+n+m-1$ because of the orthogonality conditions that the type I function $F_{n,m}$ satisfies. The polynomial between the brackets is therefore necessarily a non-zero scalar multiple of $(\alpha+1-s)_{n+m-1}$. Note that $\hat{F}_{n,m}(\alpha+n+m) \neq 0$. The normalization condition \eqref{I_norm} corresponds to taking the scalar $\mathcal{F}_{n,m}=1/\hat{F}_{n,m}(\alpha+n+m)$, which readily leads to the stated expression in \eqref{I_norm_ct}.
\end{proof}

We will compare the above to the Mellin transform of a Laguerre polynomial multiplied by a gamma density, which will be computed below. We will denote by $L_n^{(\alpha)}(x)$ the Laguerre polynomial of degree $n$ with leading coefficient $(-1)^n/n!$ (as in \cite[Chapter 5]{Szego}). It satisfies the orthogonality conditions
$$\int_0^\infty L_n^{(\alpha)}(x) e^{-x} x^{k+\alpha}  dx = 0,\quad k=0,\dots,n-1,$$
which will also be implied by its Mellin transform.

\begin{lem}  \label{Laguerre_transf}
The Mellin transform of $L_n^{(\alpha)}(x)e^{-x}$ is given by
    $$ \int_0^\infty L_n^{(\alpha)}(x) e^{-x} x^{s-1}\, dx = \Gamma(s) \frac{(\alpha+1-s)_n}{n!},\quad s > 0.$$
\end{lem}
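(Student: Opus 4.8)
The plan is to exploit the Rodrigues formula for the Laguerre polynomials,
$$ L_n^{(\alpha)}(x)\, e^{-x} = \frac{x^{-\alpha}}{n!}\, \frac{d^n}{dx^n}\!\left( x^{n+\alpha} e^{-x} \right), $$
which turns the Mellin transform into
$$ \int_0^\infty L_n^{(\alpha)}(x)\, e^{-x}\, x^{s-1}\, dx = \frac{1}{n!} \int_0^\infty \left[ \frac{d^n}{dx^n}\!\left( x^{n+\alpha} e^{-x} \right) \right] x^{s-\alpha-1}\, dx . $$
First I would note that, for $s>0$ and under the standing assumption $\alpha>-1$, the integral on the left converges absolutely: near the origin the integrand behaves like $L_n^{(\alpha)}(0)\,x^{s-1}$, and it decays exponentially at infinity.

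Next I would integrate by parts $n$ times, transferring all the derivatives onto the factor $x^{s-\alpha-1}$. At the $j$-th step the boundary term involves $\frac{d^{\,n-1-j}}{dx^{n-1-j}}\big(x^{n+\alpha} e^{-x}\big)$ times $\frac{d^{\,j}}{dx^{j}} x^{s-\alpha-1}$; near $x=0$ the first factor is $O(x^{\alpha+1+j})$ and the second is $O(x^{s-\alpha-1-j})$, so their product is $O(x^{s}) \to 0$, while at $x=\infty$ the exponential decay annihilates everything. Hence all boundary contributions vanish and one is left with
$$ \frac{(-1)^n}{n!} \int_0^\infty x^{n+\alpha} e^{-x} \left[ \frac{d^n}{dx^n} x^{s-\alpha-1} \right] dx . $$

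Finally I would compute $\frac{d^n}{dx^n} x^{s-\alpha-1} = (s-\alpha-1)(s-\alpha-2)\cdots(s-\alpha-n)\, x^{s-\alpha-1-n}$, so the integrand collapses to $x^{s-1} e^{-x}$ and the integral evaluates to $\Gamma(s)$. Recognising $(-1)^n (s-\alpha-1)(s-\alpha-2)\cdots(s-\alpha-n) = (\alpha+1-s)_n$ then yields precisely $\Gamma(s)\,(\alpha+1-s)_n/n!$, which is the claim.

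The only delicate point is the justification of the vanishing boundary terms and the convergence of the intermediate integrals for every $s>0$; this is exactly where $\alpha>-1$ enters, and in the range $-1<\alpha<0$ one may prefer to first prove the identity for large $s$ and then extend it by analyticity in $s$ of both sides. An alternative, purely algebraic route would be to expand $L_n^{(\alpha)}$ in its explicit terminating hypergeometric form, integrate term by term using $\int_0^\infty e^{-x} x^{s+k-1}\,dx = \Gamma(s)(s)_k$, and evaluate the resulting ${}_2F_1(-n,s;\alpha+1;1)$ by the Chu--Vandermonde identity; this bypasses the analytic subtleties but obscures the structural reason for the formula.
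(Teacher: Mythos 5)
Your proof is correct and is essentially identical to the paper's: both use the Rodrigues formula, integrate by parts $n$ times to transfer the derivatives onto $x^{s-\alpha-1}$, and identify $(-1)^n(s-\alpha-1)\cdots(s-\alpha-n)=(\alpha+1-s)_n$ to reduce the integral to $\Gamma(s)$. Your explicit verification that the boundary terms vanish (the $O(x^s)$ estimate near the origin) is a detail the paper leaves implicit, but it is not a different method.
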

\begin{proof}
We may use the Rodrigues formula for Laguerre polynomials (see \cite[Eq. (5.1.5)]{Szego})
\begin{equation} \label{Laguerre_Rodr}
    L_n^{(\alpha)}(x) = \frac{x^{-\alpha} e^{x}}{n!} \frac{d^n}{dx^n}\left[x^{n+\alpha} e^{-x}\right]
\end{equation}
and perform integration by parts $n$ times to get
$$ \int_0^\infty L_n^{(\alpha)}(x) e^{-x} x^{s-1} dx  = \frac{(-1)^n}{n!} \int_0^\infty x^{\alpha+n} e^{-x} \left( \frac{d^n}{dx^n} x^{s-\alpha-1}\right) dx .$$
After working out the derivatives, this becomes
$$\int_0^\infty L_n^{(\alpha)}(x) e^{-x} x^{s-1} dx = \frac{(\alpha+1-s)_n}{n!} \int_0^\infty x^{s-1} e^{-x} dx. $$
The remaining integral is the gamma function $\Gamma(s)$.
\end{proof}

The type I function can now be determined explicitly; it arises as the Mellin convolution of a Laguerre polynomial multiplied by a gamma density and a beta density. The Mellin convolution of two appropriate functions $f$ and $g$ is given by
    $$ (f\ast g)(x) = \int_0^\infty f(t) g(x/t) \frac{dt}{t}.  $$
Its key property is that its Mellin transform is multiplicative in terms of the Mellin transforms of the input functions
    $$ \int_0^\infty (f\ast g)(x) x^{s-1} dx = \left(\int_0^\infty f(x) x^{s-1} dx \right) \cdot \left(\int_0^\infty g(x) x^{s-1} dx \right).$$

\begin{thm}  \label{I_conv}
Suppose that $n+1\geq m$. The normalized type I function $F_{n,m}^{(I)}$ is given by $(n+m-1)!\mathcal{F}_{n,m}F_{n,m}/(m-1)!$, where $\mathcal{F}_{n,m}$ is the constant in \eqref{I_norm_ct} and
\begin{equation} \label{F}
   F_{n,m}(x) = \int_x^{\infty}  L_{n+m-1}^{(\alpha)}(t) e^{-t} \Bigl(1- \frac{x}{t} \Bigr)^{m-1} \Bigl(\frac{x}{t} \Bigr)^{\nu} \, \frac{dt}{t}.
\end{equation}
\end{thm}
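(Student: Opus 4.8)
The strategy is to verify the claimed formula by computing the Mellin transform of the integral $F_{n,m}(x)$ in \eqref{F} and checking that it agrees, up to the stated constant, with the expression $\hat{F}_{n,m}(s)$ from Lemma~\ref{I_Mellin_lem}. Since the system is perfect, the normalized type I function is unique, so matching Mellin transforms (which are injective on a suitable class of functions) will suffice to identify $F_{n,m}^{(I)}$.

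First I would recognize the integral in \eqref{F} as a Mellin convolution. Writing the integrand as a function of $t$ and $x/t$, we have $F_{n,m}(x) = (f \ast g)(x)$ where $f(t) = L_{n+m-1}^{(\alpha)}(t) e^{-t}$ and $g(u) = (1-u)^{m-1} u^{\nu} \mathbbm{1}_{[0,1]}(u)$; the indicator appears because the factor $(1 - x/t)^{m-1}$ combined with the lower limit $x$ in the integral restricts to $x/t \in [0,1]$. Then I would invoke the multiplicativity of the Mellin transform under convolution: $\hat{F}_{n,m}(s) = \hat{f}(s)\,\hat{g}(s)$. By Lemma~\ref{Laguerre_transf}, $\hat{f}(s) = \Gamma(s)(\alpha+1-s)_{n+m-1}/(n+m-1)!$. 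For $\hat{g}(s)$ I would compute $\int_0^1 (1-u)^{m-1} u^{\nu+s-1}\,du = B(\nu+s, m) = \Gamma(\nu+s)\Gamma(m)/\Gamma(\nu+s+m) = (m-1)!/(s+\nu)_m$, valid for $s > \max\{0,-\nu\}$. Multiplying gives
\begin{equation*}
   \hat{F}_{n,m}(s) = \frac{(m-1)!}{(n+m-1)!}\cdot\frac{\Gamma(s)}{(s+\nu)_m}(\alpha+1-s)_{n+m-1},
\end{equation*}
which is exactly $\tfrac{(m-1)!}{(n+m-1)!}$ times the function $\hat{F}_{n,m}(s)$ of Lemma~\ref{I_Mellin_lem}. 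Hence $(n+m-1)!\,F_{n,m}/(m-1)!$ has the same Mellin transform as the unnormalized type I function identified there, and multiplying further by $\mathcal{F}_{n,m}$ produces the normalized one, giving the stated identity.

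A few technical points need attention. I must check that $F_{n,m}(x)$ as defined is genuinely a type I function, i.e.\ that it has the form $A_{n,m}(x)e^{-x} + B_{n,m}(x)E_{\nu+1}(x)$ with the correct degree bounds $\deg A_{n,m} = n-1$, $\deg B_{n,m} = m-1$; this should follow either from unfolding the convolution structure and integrating by parts, or a posteriori from the shape of the Mellin transform together with the argument in the proof of Lemma~\ref{I_Mellin_lem} run in reverse (the pole structure at $s=0,-\nu,\dots,-\nu-m+1$ forces the $E_{\nu+1}$ component, the entire part forces the $e^{-x}$ component, and the degree of $(\alpha+1-s)_{n+m-1}$ combined with $m-1 \le n$ pins down the degrees). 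I also need convergence of the defining integral near $t=\infty$ (ensured by $e^{-t}$) and near $t = x$ (ensured by $m \ge 1$ and $\alpha > -1$), and the integrability needed to apply the Mellin convolution theorem on the strip $s > \max\{0,-\nu\}$.

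The main obstacle is the degree/structure claim for $A_{n,m}$ and $B_{n,m}$: showing the convolution integral really collapses into the exponential-times-polynomial plus $E_{\nu+1}$-times-polynomial form with the sharp degrees, rather than merely producing a function with the right Mellin transform. The cleanest route is probably to expand $(1 - x/t)^{m-1}$ by the binomial theorem, split the resulting $t$-integrals, and recognize incomplete-gamma / exponential-integral pieces explicitly — but this is where the hypothesis $n+1 \ge m$ is essential (exactly as in Lemma~\ref{I_Mellin_lem}), and care is needed so that no spurious lower-degree terms or extra singular contributions survive.
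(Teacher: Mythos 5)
Your proposal is correct and follows essentially the same route as the paper: identify the integral as the Mellin convolution of $L_{n+m-1}^{(\alpha)}(t)e^{-t}$ with the beta density $(1-u)^{m-1}u^{\nu}\chi_{[0,1]}(u)$, multiply the Mellin transforms, and match against Lemma \ref{I_Mellin_lem} using uniqueness of the Mellin transform. The structural worry in your last paragraph is not actually needed: since Lemma \ref{I_Mellin_lem} already computed the Mellin transform of the (existing, unique) normalized type I function, agreement of Mellin transforms identifies the convolution with $F_{n,m}^{(I)}$ directly, with no need to re-verify the form $A_{n,m}e^{-x}+B_{n,m}E_{\nu+1}$ or the degree bounds.
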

\begin{proof}
The Mellin transform of the normalized type I function $F_{n,m}^{(I)}$, see Lemma \ref{I_Mellin_lem}, arises as a non-zero scalar multiple of the product of the Mellin transform of a Laguerre polynomial multiplied with a gamma density
$$ \int_0^\infty L_{n+m-1}^{(\alpha)}(x) e^{-x} x^{s-1} dx = \Gamma(s) \frac{(\alpha+1-s)_{n+m-1}}{(n+m-1)!},\quad s > 0 , $$
see Lemma \ref{Laguerre_transf}, and of a beta density
$$ \int_0^\infty (1-x)^{m-1} x^{\nu+s-1} \chi_{[0,1]}(x) dx = B(s+\nu,m) = \frac{(m-1)!}{(s+\nu)_m},\quad s > -\nu. $$
The multiplicative property of the Mellin convolution and the uniqueness of the Mellin transform then imply that $F_{n,m}^{(I)}$ is given by the Mellin convolution of the functions\break $(n+m-1)! L_{n+m-1}^{(\alpha)}(x)e^{-x}$ and $(1-x)^{m-1} x^{\nu} \chi_{[0,1]}(x)/(m-1)!$, which gives the integral in \eqref{F}.
\end{proof}

By working out the convolution formula for the type I function in \eqref{F}, we may obtain explicit expressions for the underlying type I polynomials $A_{n,m}$ and $B_{n,m}$. However, since this is rather tedious and not very insightful, we won't do it here. Later in Proposition \ref{IB_2F2}, we will be able to derive an expression for $B_{n,m}$ in a more elegant way.
\medbreak

We can use the convolution formula for the type I function to derive a Rodrigues formula for it. The relevant weight functions are those of the associated Nikishin system, i.e. $w_1$ and $w_2/w_1$.

\begin{thm}
Suppose that $n+1 \geq m$. The type I function in (\ref{F}) is given by the Rodrigues formula
\begin{equation} \label{I_Rodr}
    F_{n,m}(x) = \frac{x^{-\alpha}}{(n+m-1)!(\nu+1)_{m-1}} \frac{d^{n+m-1}}{dx^{n+m-1}}\left[x^{n+m-1}w_1(x)\frac{d^{m-1}}{dx^{m-1}}\left[x^{m-1} \frac{w_2(x)}{w_1(x)} \right] \right] .
\end{equation}
\end{thm}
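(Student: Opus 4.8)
The plan is to prove \eqref{I_Rodr} by showing that its two sides have the same Mellin transform and then appealing to uniqueness of the Mellin transform; both functions are continuous on $(0,\infty)$, decay exponentially at $+\infty$, and are integrable near $0$, so this is legitimate. For the left-hand side, the computation in the proof of Theorem \ref{I_conv} (via Lemma \ref{Laguerre_transf} and the beta integral $B(s+\nu,m)=(m-1)!/(s+\nu)_m$) already shows that the Mellin transform of $F_{n,m}$ in \eqref{F} equals
\[
\frac{(m-1)!}{(n+m-1)!}\,\frac{\Gamma(s)\,(\alpha+1-s)_{n+m-1}}{(s+\nu)_m}.
\]
It therefore suffices to compute the Mellin transform of the right-hand side of \eqref{I_Rodr} and match it with this.

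Write $h(x)=\frac{d^{m-1}}{dx^{m-1}}\bigl[x^{m-1}w_2(x)/w_1(x)\bigr]$. First I would integrate by parts $n+m-1$ times against $x^{s-1}$: since $x^{-\alpha}x^{s-1}$ gets differentiated $n+m-1$ times while $x^{n+m-1}w_1(x)=x^{n+m-1+\alpha}e^{-x}$ is left alone, the polynomial prefactor collapses to $(\alpha+1-s)_{n+m-1}$, the weight $x^{n+m-1+\alpha}e^{-x}$ is reduced to $x^{s-1}e^{-x}$, and
\[
\int_0^\infty x^{s-1}\bigl(\text{RHS of \eqref{I_Rodr}}\bigr)\,dx
=\frac{(\alpha+1-s)_{n+m-1}}{(n+m-1)!\,(\nu+1)_{m-1}}\int_0^\infty x^{s-1}e^{-x}h(x)\,dx.
\]
The boundary terms vanish for $\operatorname{Re}s$ large — using $E_{\nu}(x)\sim e^{-x}/x$ and its derivatives at $+\infty$ together with the local behaviour of $E_{\nu+1}$ at $0$ — and both sides are analytic in $s$, so matching Mellin transforms on a half-plane suffices.

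Next I would simplify $h$. Iterating the recurrence $xE_p(x)=e^{-x}-p\,E_{p+1}(x)$ (equivalently, the second row of the matrix Pearson equation for $(w_1,w_2)$, or integration by parts in $E_{\nu+1}(x)=\int_1^\infty t^{-\nu-1}e^{-xt}\,dt$) gives, by induction on $m$,
\[
x^{m-1}e^{x}E_{\nu+1}(x)=\pi_{m-2}(x)+(-1)^{m-1}(\nu+1)_{m-1}\,e^{x}E_{\nu+m}(x),
\]
with $\pi_{m-2}$ a polynomial of degree at most $m-2$. Applying $\frac{d^{m-1}}{dx^{m-1}}$ annihilates $\pi_{m-2}$, so $h(x)=(-1)^{m-1}(\nu+1)_{m-1}\frac{d^{m-1}}{dx^{m-1}}\bigl[e^{x}E_{\nu+m}(x)\bigr]$. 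Substituting this into the last integral and integrating by parts $m-1$ more times — now using \eqref{Laguerre_Rodr} in the shifted form $\frac{d^{m-1}}{dx^{m-1}}[x^{s-1}e^{-x}]=(m-1)!\,x^{s-m}e^{-x}L_{m-1}^{(s-m)}(x)$ — turns it into $(\nu+1)_{m-1}(m-1)!\int_0^\infty x^{s-m}L_{m-1}^{(s-m)}(x)E_{\nu+m}(x)\,dx$. Finally, inserting $E_{\nu+m}(x)=\int_1^\infty t^{-\nu-m}e^{-xt}\,dt$, using Fubini together with the Laplace transform $\int_0^\infty x^{\gamma}L_k^{(\gamma)}(x)e^{-px}\,dx=\Gamma(\gamma+k+1)(p-1)^k/(k!\,p^{\gamma+k+1})$ and the beta integral $\int_1^\infty(t-1)^{m-1}t^{-s-\nu-m}\,dt=(m-1)!/(s+\nu)_m$, one gets $\int_0^\infty x^{s-1}e^{-x}h(x)\,dx=(\nu+1)_{m-1}(m-1)!\,\Gamma(s)/(s+\nu)_m$. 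Combining, the Mellin transform of the right-hand side of \eqref{I_Rodr} equals $\frac{(m-1)!}{(n+m-1)!}\Gamma(s)(\alpha+1-s)_{n+m-1}/(s+\nu)_m$, which is exactly that of $F_{n,m}$.

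The repeated integrations by parts are routine; the genuine content is the collapse $h=(-1)^{m-1}(\nu+1)_{m-1}\frac{d^{m-1}}{dx^{m-1}}[e^{x}E_{\nu+m}]$ coming from the $E_p$-recurrence, and the Laplace-transform evaluation of the last Laguerre integral. I expect the main nuisance to be bookkeeping — tracking signs and Pochhammer factors through the nested differentiations — together with checking that every boundary term vanishes, which forces the argument to be carried out first for $\operatorname{Re}s$ large and then extended by analytic continuation. One could instead verify directly that the right-hand side of \eqref{I_Rodr} has the form $A(x)e^{-x}+B(x)E_{\nu+1}(x)$ with $\deg A\le n-1$, $\deg B\le m-1$, satisfies the $n+m-1$ orthogonality conditions, and is normalized as in \eqref{I_norm}; but controlling the polynomial degrees through the two nested Rodrigues differentiations is more delicate than the Mellin computation, so I would not pursue that route.
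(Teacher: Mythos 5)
Your proof is correct, but it follows a genuinely different route from the paper's. The paper works directly at the level of functions: starting from the convolution formula \eqref{F}, it substitutes $t\mapsto xt$, pulls the $(n+m-1)$-fold derivative of the Laguerre Rodrigues formula outside the integral via the Leibniz integral rule, and then identifies the remaining integral $\int_1^\infty e^{-xt}(1-1/t)^{m-1}t^{-\nu-1}\,dt$ as $(m-1)!\,e^{-x}U(m,-\nu+1;x)$, after which the known differentiation formula $\frac{d^{m-1}}{dx^{m-1}}[x^{m-1}U(1,-\nu+1;x)]=(m-1)!(\nu+1)_{m-1}U(m,-\nu+1;x)$ with $U(1,-\nu+1;x)=e^xE_{\nu+1}(x)=w_2/w_1$ produces the inner derivative. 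You instead work entirely on the Mellin-transform side and invoke uniqueness: repeated integration by parts reduces everything to the collapse $h=(-1)^{m-1}(\nu+1)_{m-1}\frac{d^{m-1}}{dx^{m-1}}[e^xE_{\nu+m}]$ (which you establish by an elementary induction on the recurrence $xE_p=e^{-x}-pE_{p+1}$; I checked the induction and the Pochhammer bookkeeping, and they are right), followed by the Laplace transform of $x^\gamma L_k^{(\gamma)}(x)$ and a beta integral, and the final product $\frac{(m-1)!}{(n+m-1)!}\Gamma(s)(\alpha+1-s)_{n+m-1}/(s+\nu)_m$ does match the Mellin transform of \eqref{F}. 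Your key identity is essentially the same differentiation formula the paper quotes from the DLMF, rederived by hand; what your route buys is self-containedness (no confluent hypergeometric machinery) at the cost of having to verify the vanishing of boundary terms and the Fubini step for $\operatorname{Re}s$ large and then continue analytically, whereas the paper's route only needs to justify differentiation under the integral sign and exhibits the structural role of the $U$-function more transparently.
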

\begin{proof}
The result will follow from the Rodrigues formula for the underlying Laguerre polynomial. We first perform the change of variables $t\mapsto xt$ in \eqref{F} to get
    $$ F_{n,m}(x) = \int_1^\infty L_{n+m-1}^{(\alpha)}(xt) e^{-xt} \left(1-\frac{1}{t}\right)^{m-1} \frac{dt}{t^{\nu+1}}. $$
From the Rodrigues formula $\eqref{Laguerre_Rodr}$ for $L_{n+m-1}^{(\alpha)}(x)$ and the chain rule, it follows that 
    $$ L_{n+m-1}^{(\alpha)}(xt) e^{-xt} = \frac{(xt)^{-\alpha}}{(n+m-1)!} \frac{d^{n+m-1}}{dx^{n+m-1}}\left[(xt)^{n+m-1+\alpha}e^{-xt}\right] t^{-n-m+1}, $$
or after rearranging some factors depending on $t$,
$$ L_{n+m-1}^{(\alpha)}(xt) e^{-xt} = \frac{x^{-\alpha}}{(n+m-1)!} \frac{d^{n+m-1}}{dx^{n+m-1}}\left[x^{n+m-1+\alpha}e^{-xt}\right] .$$
We can use this to take $n+m-1$ derivatives outside of the integral
    $$ F_{n,m}(x) = \frac{x^{-\alpha}}{(n+m-1)!} \frac{d^{n+m-1}}{dx^{n+m-1}}\left[ x^{n+m-1+\alpha} \int_{1}^{\infty} e^{-xt} \left(1-\frac{1}{t}\right)^{m-1} \frac{dt}{t^{\nu+1}} \right]$$
(the integrand is smooth for $x\in\R$ and $t\geq 1$, so an application of the Leibniz integral rule is justified). The remaining integral is related to the confluent hypergeometric function
    $$ U(a,b,x) = \frac{1}{\Gamma(a)} \int_{0}^{\infty} e^{-xt} t^{a-1} (1+t)^{b-a-1} \, dt, $$
which is defined for $x,a>0$ and $b\in\R$ (see \cite[Eq. 13.4.4]{DLMF}), via the change of variables $t\mapsto 1+t$
    $$ \int_{1}^{\infty} e^{-xt} \left(1-\frac{1}{t}\right)^{m-1} \frac{dt}{t^{\nu+1}} = (m-1)! e^{-x} U(m,-\nu+1;x). $$
The latter can be written in terms of
    $$ U(1,-\nu+1;x) = e^{x} \int_{1}^{\infty} e^{-xt} t^{-\nu-1} \, dt = e^{x} E_{\nu+1}(x) $$
by making use of the differentiation formula (see \cite[Eq. 13.3.24]{DLMF})
    $$ \frac{d^{n}}{dz^{n}}\left[ x^{n+a-1} U(a,b;z) \right] = (a)_n (a-b+1)_n x^{a-1} U(a+n,b;z). $$
Indeed, the special case 
    $$ \frac{d^{m-1}}{dx^{m-1}}\left[ x^{m-1} U(1,-\nu+1;x) \right] = (m-1)! (\nu+1)_{m-1} U(m,-\nu+1;x)$$
leads to the desired result.
\end{proof}

\subsection{Hypergeometric series}

The Mellin transform of the type I function can be used to derive a representation in terms of certain generalized hypergeometric series. The generalized hypergeometric function ${}_pF_q$ is a function that depends on two tuples $(a_1,\dots,a_p)\in\C^p$ and $(b_1,\dots,b_q)\in(\C\backslash\Z_{\leq 0})^q$ and is defined, for $p\leq q+1$, by the power series
    $$ {}_pF_q \left( \begin{array}{c} a_1,\dots,a_p \\ b_1,\dots,b_q \end{array}; x \right) = \sum_{k=0}^{\infty} \frac{(a_1)_k \dots (a_p)_k}{(b_1)_k \dots (b_q)_k} \frac{x^k}{k!},  $$
see, e.g., \cite[Chapter 16]{DLMF} for some basic properties.

\begin{thm} \label{I_2F2}
Suppose that $n+1\geq m$ and $\nu\not\in\Z$. The type I function with Mellin transform (\ref{I_Mellin}) arises as the following combination of ${}_2F_2$ hypergeometric series
$$\begin{aligned}
    F_{n,m}(x) = &\frac{(\alpha+1)_{n+m-1}}{(\nu)_m} {}_2F_2 \left( \begin{array}{c} n+m+\alpha,-m-\nu+1 \\ \alpha+1, -\nu+1 \end{array}; -x \right) \\
    &+ \frac{(\alpha+\nu+1)_{n+m-1} \Gamma(-\nu)}{(m-1)!} {}_2F_2 \left( \begin{array}{c} -m+1,n+m+\alpha+\nu \\ \nu+1,\alpha+\nu+1 \end{array}; -x \right) x^\nu .
\end{aligned}$$
\end{thm}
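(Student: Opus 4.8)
The plan is to recover $F_{n,m}$ from its Mellin transform \eqref{I_Mellin} by an inverse Mellin transform / residue computation, reading off the hypergeometric series from the poles. Concretely, I would write
$$ F_{n,m}(x) = \frac{1}{2\pi i} \int_{c-i\infty}^{c+i\infty} \frac{\Gamma(s)}{(s+\nu)_m}(\alpha+1-s)_{n+m-1}\, x^{-s}\, ds, $$
for a vertical line with $c>\max\{0,-\nu\}$, and close the contour to the left, picking up residues at the poles of the integrand. Since $n+1\ge m$, the polynomial factor $(\alpha+1-s)_{n+m-1}$ ensures the integrand decays appropriately and the Mellin–Barnes integral converges; the poles to the left come from $\Gamma(s)$ at $s=-k$, $k\ge 0$, and from $1/(s+\nu)_m=1/\bigl((s+\nu)(s+\nu+1)\cdots(s+\nu+m-1)\bigr)$ at $s=-\nu-j$, $j=0,\dots,m-1$. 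The hypothesis $\nu\notin\Z$ guarantees these two families of poles are disjoint and all simple, so no logarithmic ($\log x$) terms appear — this is exactly why $\nu\notin\Z$ is assumed.

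The two residue sums then produce the two ${}_2F_2$ series. For the $\Gamma(s)$-poles, $\operatorname{Res}_{s=-k}\Gamma(s)=(-1)^k/k!$, and evaluating $(s+\nu)_m$ and $(\alpha+1-s)_{n+m-1}$ at $s=-k$ and simplifying the Pochhammer ratios (using $(\alpha+1+k)_{n+m-1}=(\alpha+1)_{n+m-1}(\alpha+n+m)_k/(\alpha+1)_k$ and similar identities for $(-\nu)_m(-\nu+1)_k$ type factors) yields, up to the prefactor $(\alpha+1)_{n+m-1}/(\nu)_m$, the series ${}_2F_2\!\left(\begin{array}{c} n+m+\alpha,\,-m-\nu+1\\ \alpha+1,\,-\nu+1\end{array};-x\right)$ after collecting the $x^k$ coefficients and a sign bookkeeping that accounts for $x^{-s}=x^{k}$. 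For the $s=-\nu-j$ poles, the residue of $1/(s+\nu)_m$ at $s=-\nu-j$ is $(-1)^j/\bigl(j!\,(m-1-j)!\bigr)$; each term carries a factor $x^{\nu}x^{j}$, and after the same kind of Pochhammer rearrangement one gets $x^\nu$ times the second series ${}_2F_2\!\left(\begin{array}{c} -m+1,\,n+m+\alpha+\nu\\ \nu+1,\,\alpha+\nu+1\end{array};-x\right)$ with prefactor $(\alpha+\nu+1)_{n+m-1}\Gamma(-\nu)/(m-1)!$, where $\Gamma(-\nu)$ emerges naturally from writing $\prod$ of the $(s+\nu+i)$ with $i\ne j$ factors in terms of gamma functions evaluated at $-\nu$.

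Alternatively — and perhaps more cleanly — I could avoid the contour integral by starting from the convolution formula \eqref{F} (or the equivalent $U$-function integral in the Rodrigues proof), expanding $L_{n+m-1}^{(\alpha)}$ in its explicit hypergeometric form and the remaining $U(m,-\nu+1;x)$ via its known ${}_1F_1$-plus-$x^\nu{}_1F_1$ connection formula (valid precisely when $-\nu+1\notin\Z$), then multiplying the two ${}_1F_1$'s against $e^{-x}$-type factors and resumming; but this route requires a Cauchy-product identity to collapse the resulting double sums into single ${}_2F_2$'s, which is more error-prone. I expect the main obstacle in either approach to be the Pochhammer-symbol bookkeeping: verifying that the coefficients $(a_1)_k(a_2)_k/\bigl((b_1)_k(b_2)_k k!\bigr)$ come out with exactly the claimed upper/lower parameters and that the two prefactors are correct, including all signs coming from $(\alpha+1-s)_{n+m-1}=(-1)^{n+m-1}(s-\alpha-1)_{\,\cdots}$ reversals. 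The convergence/contour-closing justification (controlling $\Gamma(s)$ on vertical lines via Stirling, and checking the arc contributions vanish, which uses $n+1\ge m$) is routine once set up, so I would state it briefly and move on.
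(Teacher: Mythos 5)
Your primary route --- inverting the Mellin transform \eqref{I_Mellin} by closing the contour to the left and summing residues at the two disjoint families of simple poles (from $\Gamma(s)$ and from $1/(s+\nu)_m$, the disjointness being exactly where $\nu\notin\Z$ is used) --- is precisely the paper's proof, which performs the same residue computation for $x^\alpha F_{n,m}$ over a rectangle with vertices $c\pm iN$, $(c-M)\pm iN$ and lets $M,N\to\infty$. One minor bookkeeping correction: the factor $\Gamma(-\nu)$ in the second series actually emerges from evaluating the gamma factor at the pole, via $\Gamma(-\nu-j)=\Gamma(-\nu)/(-\nu-j)_j$, rather than from rewriting the remaining Pochhammer factors of $(s+\nu)_m$; this does not affect the validity of your approach.
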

\begin{proof}
It will be more convenient to work with the Mellin transform of the complete type I function $x^{\alpha} F_{n,m}(x)$, which is
    $$ \int_0^\infty F_{n,m}(x) x^{\alpha+s-1} dx = \frac{\Gamma(s+\alpha)}{(s+\alpha+\nu)_m} (1-s)_{n+m-1},\quad s>-\min\{\alpha,\alpha+\nu\}. $$
Recall that the gamma function has an analytic continuation to $\C\backslash\Z_{\leq 0}$ that has simple poles at all $k\in\Z_{\leq 0}$ with residue $(-1)^k/k!$. We can therefore extend the above function in $s$ analytically to the complex plane except for the simple poles $-\alpha,-\alpha-1,\dots$ and $-\alpha-\nu,\dots,-\alpha-\nu-m+1$ (here we use that $\nu\not\in\Z$). The described poles all lie in an interval $(-\infty,c)$ for some $c\in\R$ that satisfies $-\min\{\alpha,\alpha+\nu,0\}<c<1$ (here we use that $\alpha,\alpha+\nu>-1$). The original function can then be recovered via the inverse Mellin transform
    $$ x^\alpha F_{n,m}(x) = \int_{c-i\infty}^{c+i\infty} \frac{\Gamma(s+\alpha)}{(s+\alpha+\nu)_m} (1-s)_{n+m-1} x^{-s} ds$$
by making use of the residue theorem. For this we can consider a counterclockwise oriented rectangle that connects the points $c\pm iN$ and $(c-M)\pm iN$, add the residues of the poles in this region and let $N,M\to\infty$. The residue of the integrand at $s=-\alpha-k$ with $k\in\Z_{\geq 0}$ is
    $$ \frac{(-1)^k}{k!} \frac{(k+\alpha+1)_{n+m-1}}{(-k+\nu)_m} x^{\alpha+k} = \frac{(-1)^k}{k!} \frac{(n+m+\alpha)_k (\alpha+1)_{n+m-1} (-m-\nu+1)_k }{(\alpha+1)_k (-\nu+1)_k (\nu)_m} x^{\alpha+k} $$
and the residue at $s=-\alpha-\nu-k$ with $k\in\{0,\dots,m-1\}$ is
$$ \begin{aligned} \frac{\Gamma(-k-\nu)}{(-k)_k (1)_{m-k-1}} & (k+\alpha+\nu+1)_{n+m-1} x^{\alpha+\nu+k} \\
    &= \frac{(-1)^k}{k!} \frac{\Gamma(-\nu) (n+m+\alpha+\nu)_k (\alpha+\nu+1)_{n+m-1} (-m+1)_k}{(\nu+1)_k (\alpha+\nu+1)_k (m-1)!} x^{\alpha+\nu+k}.
\end{aligned} $$
These are exactly the terms in the stated expression for $F_{n,m}$.
\end{proof}

The fact that part of the above expression is a polynomial times $x^{\nu}$ is to be expected, because of the following expansions for the weight functions (see \cite[Eq. 8.19.10]{DLMF}) of which $F_{n,m}$ is a polynomial combination:
    $$ e^{-x} = {}_0F_0 \left( \begin{array}{c} - \\ - \end{array}; -x \right), \quad E_{\nu+1}(x) = \Gamma(-\nu) x^\nu + \frac{1}{\nu} \, {}_1F_1 \left( \begin{array}{c} -\nu \\ -\nu+1 \end{array}; -x \right). $$
It also suggests that this polynomial will be related to $B_{n,m}(x)$. We will make this rigorous in what follows.
\medbreak

The connection to certain ${}_2F_2$ hypergeometric series allows us to obtain many properties of the type I function. For example, we can use it to determine a differential equation. 

\begin{prop} \label{I_diff_eq}
Suppose that $n+1\geq m$ and $\nu\not\in\Z$. Every type I function $y=F_{n,m}(x)$ satisfies the differential equation
$$ x^2 y''' + x (\alpha-\nu+3+x) y'' + ((\alpha+1)(-\nu+1)+x(n+\alpha-\nu+2))y' + (n+m+\alpha)(-m-\nu+1)y = 0. $$
\end{prop}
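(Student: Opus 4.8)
The plan is to exploit the ${}_2F_2$ representation from Theorem~\ref{I_2F2} together with the standard hypergeometric differential equation. Recall that $w = {}_pF_q(a_1,\dots,a_p;b_1,\dots,b_q;z)$ satisfies
$$\Bigl[\theta(\theta+b_1-1)\cdots(\theta+b_q-1) - z(\theta+a_1)\cdots(\theta+a_p)\Bigr] w = 0,$$
where $\theta = z\,d/dz$. For ${}_2F_2$ this is a third-order operator in $\theta$ (degree $3$ because $q+1 = 3$), hence a third-order ODE in $z$. So each of the two ${}_2F_2$ pieces of $F_{n,m}$ individually solves a third-order linear ODE; the issue is that the two ODEs are \emph{different} (different parameter tuples), so I cannot simply quote one of them. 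The resolution is to observe that the second piece carries a factor $x^\nu$, which shifts $\theta$: if $u(x)$ satisfies the operator with parameters $(a_i;b_j)$, then $x^\nu u(x)$ satisfies the operator with parameters $(a_i-\nu;\,b_j-\nu)$. One checks that the first tuple $(n+m+\alpha,\,-m-\nu+1;\ \alpha+1,\,-\nu+1)$ becomes, after subtracting $\nu$ from every entry, $(n+m+\alpha-\nu,\,-m+1;\ \alpha-\nu+1,\,-\nu+1-\nu)$ — which is \emph{not} the second tuple, so this naive matching fails too, and one genuinely has to produce a single operator annihilating both.

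The cleanest route, then, is to work directly from the Mellin transform rather than from the series. Write $M(s) = \hat F_{n,m}(s) = \Gamma(s)(\alpha+1-s)_{n+m-1}/(s+\nu)_m$ (up to the irrelevant constant $\mathcal F_{n,m}$, and here I use the transform of $F_{n,m}$ itself, with the $x^\alpha$ absorbed as in the proof of Theorem~\ref{I_2F2}, so that the relevant shift is transparent). A differential operator $\mathcal L = \sum_j c_j(x)\,x^j\,d^j/dx^j$ with polynomial coefficients acts on the Mellin side as a recurrence: $\int_0^\infty (x^j y^{(j)}) x^{s-1}dx = (-1)^j (s)_j\, M(s) \cdot(\text{shift})$, and more usefully, multiplication by $x$ corresponds to $s\mapsto s-1$ on the transform, while $x\,d/dx$ corresponds to multiplication by $-s$. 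So I would look for the first-order (in the "shift" sense) functional equation satisfied by $M(s)$: from the explicit product formula,
$$\frac{M(s+1)}{M(s)} = \frac{s\,(\alpha+1-s-n-m+1)}{(\alpha+1-s)\,(s+\nu+m)} = \frac{s\,(s+n+m-\alpha-1)}{(s-\alpha-1)(s+\nu+m)}\cdot(-1),$$
wait — more precisely $M(s+1) (s-\alpha-1)(s+\nu+m)\cdot(\text{sign}) = M(s)\, s\,(s+n+m-\alpha-1)\cdot(\text{sign})$. Clearing denominators gives a relation of the form $P(s)M(s+1) = Q(s)M(s)$ with $P,Q$ cubic. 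Translating $M(s+1)\leftrightarrow$ (multiplication by $x$ on $y$, i.e. a shift) and $s\leftrightarrow -x\,d/dx$ back to the $x$-side yields a third-order ODE for $y = F_{n,m}(x)$; matching degrees (the cubic in $s$ becomes a third-order operator) produces exactly the stated equation. The bookkeeping is: replace each factor $(s + c)$ acting on $M(s)$ by $-(x\,d/dx) + c$ acting on $y$, replace the shift $M(s+1)$ by the extra factor of $x$, and collect. The main obstacle is purely computational: keeping the signs and the ordering of the $\theta$-factors straight when converting $(\theta+c_1)(\theta+c_2)(\theta+c_3)$ into $x^3 y''' + \dots$, since $\theta$ does not commute with multiplication by $x$ and one must expand $\theta^k$ in terms of $x^j d^j/dx^j$ via Stirling numbers (for $k\le 3$ this is short: $\theta = x\partial$, $\theta^2 = x^2\partial^2 + x\partial$, $\theta^3 = x^3\partial^3 + 3x^2\partial^2 + x\partial$).

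I would therefore organize the write-up as follows. First, record the functional equation $P(s)\,M(s+1) = Q(s)\,M(s)$ from the closed form of $\hat F_{n,m}$, identifying $P(s) = (s-\alpha-1)(s+\nu+m)(\text{linear})$ and $Q(s) = s\,(s+n+m-\alpha-1)(\text{linear})$ — one must pick up one more linear factor from the ratio $\Gamma(s+1)/\Gamma(s) = s$ together with the Pochhammer ratios, so that both sides are genuinely cubic. Second, invoke the dictionary between the Mellin shift/multiplication operators and the differential operators $x\,d/dx$ and multiplication by $x$, valid because $F_{n,m}$ and its derivatives decay (it is a polynomial combination of $e^{-x}$ and $E_{\nu+1}(x)$, with the orthogonality giving vanishing boundary terms — the same justification already used implicitly in Lemma~\ref{I_Mellin_lem}). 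Third, substitute $\theta = x\,d/dx$ and expand, collecting coefficients of $y''', y'', y', y$; the coefficient of $y$ should emerge as $Q(0)$-type data, namely $(n+m+\alpha)(-m-\nu+1)$, which is a good consistency check, as is the leading coefficient $x^2$ of $y'''$ (coming from the $x^3$ in $\theta^3$ minus a cancellation, or directly from the degree count). Finally, note the restriction $\nu\notin\Z$ is inherited from Theorem~\ref{I_2F2} (needed so that $x^\nu$ genuinely appears and the reduction of $(s+\nu)_m$ has no spurious cancellation), and $n+1\ge m$ is inherited so that $\hat F_{n,m}$ has the stated form. An alternative, if one prefers to avoid the Mellin-operator dictionary: apply the known ${}_2F_2$ ODE to the first summand, separately apply it to $x^\nu\times{}_2F_2$ for the second summand, and then check that the \emph{stated} third-order operator annihilates each of the two known third-order solution spaces — equivalently, verify the stated operator has the correct indicial roots $\{0\} \cup \{\nu\}$ at the origin (roots $1-s$ evaluated appropriately) and the correct behavior at $\infty$; since a third-order operator is determined by enough such data this pins it down. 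Either way the content is routine once the functional equation for $\hat F_{n,m}$ is written down; that is the step I would present in full and the rest I would compress.
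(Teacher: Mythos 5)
Your main route --- deriving the first-order functional equation for the Mellin transform and translating it into a $\theta$-operator --- is genuinely different from the paper's proof and does work. The correct functional equation is
\begin{equation*}
(\alpha+n+m-1-s)(s+\nu+m)\,M(s+1) \;=\; s\,(\alpha-s)(s+\nu)\,M(s),
\end{equation*}
and under the dictionary $s\leftrightarrow-\theta$, $M(s+1)\leftrightarrow xy$ (with $\theta\,(xy)=x(\theta+1)y$ handling the ordering) this becomes
\begin{equation*}
\theta(\theta+\alpha)(\theta-\nu)\,y + x\,(\theta+n+m+\alpha)(\theta-m-\nu+1)\,y=0,
\end{equation*}
which expands to exactly the stated equation. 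The paper instead observes that the two ${}_2F_2$ terms in Theorem \ref{I_2F2} are two of the three fundamental solutions (DLMF 16.8.6) of a \emph{single} third-order hypergeometric ODE, so $F_{n,m}$ solves it as a linear combination; your route buys independence from the explicit series decomposition at the cost of the Mellin/ODE dictionary (whose boundary terms you must, and do, justify via the decay of $F_{n,m}$ and its derivatives).

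Two corrections are needed, however. First, your dismissal of the direct route is wrong: with $b_2=1-\nu$, the fundamental solution with exponent $1-b_2=\nu$ at the origin is $x^{\nu}\,{}_2F_2(a_1-b_2+1,a_2-b_2+1;\,b_1-b_2+1,2-b_2;-x) = x^{\nu}\,{}_2F_2(n+m+\alpha+\nu,-m+1;\,\alpha+\nu+1,\nu+1;-x)$, which \emph{is} the second term of Theorem \ref{I_2F2}; your "subtract $\nu$ from every entry" rule mishandles the distinguished factor $\theta$ in $\theta\prod(\theta+b_j-1)-z\prod(\theta+a_i)$ (the correct shift is $a_i\mapsto a_i-b_2+1$, not $a_i\mapsto a_i-\nu$), so the matching you declared to fail in fact succeeds and is the paper's one-line argument. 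Second, your Pochhammer ratio is off: $(\alpha+1-(s+1))_{n+m-1}/(\alpha+1-s)_{n+m-1}=(\alpha-s)/(\alpha+n+m-1-s)$, so $P$ is quadratic and $Q$ is cubic (not both cubic as you assert --- if both were cubic the leading coefficient of the ODE would not be $x^2$), and the constant term $(n+m+\alpha)(-m-\nu+1)$ comes from the $P$-side (the $x$-multiplied factor), not from "$Q(0)$-type data" ($Q(0)=0$). These slips would surface immediately in the bookkeeping you propose to carry out, and fixing them yields the stated equation.
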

\begin{proof}
In general, it is known that a ${}_pF_q$ hypergeometric series satisfies a linear differential equation of order $\max\{p,q+1\}$, see \cite[Eq. 16.8.3]{DLMF} or \cite[\S 16.8 (ii)]{NIST}. For the ${}_2F_2$ hypergeometric series
    $$ {}_2F_2 \left( \begin{array}{c} a_1,a_2 \\ b_1, b_2 \end{array}; -x \right), $$
this differential equation becomes
\begin{equation} \label{2F2_diff_eq}
    x^2 y''' + x (b_1+b_2+1+x) y'' + (b_1b_2+x(a_1+a_2+1))y' + a_1a_2 y = 0.
\end{equation}
There are two other fundamental solutions and they are given by
    $$ x^{1-b_1} {}_2F_2 \left( \begin{array}{c} a_1-b_1+1,a_2-b_1+1 \\ -b_1+2, b_2-b_1+1 \end{array}; -x \right),\quad x^{1-b_2} {}_2F_2 \left( \begin{array}{c} a_1-b_2+1,a_2-b_2+1 \\ b_1-b_2+1, -b_2+2 \end{array}; -x \right),   $$
see \cite[Eq. 16.8.6]{DLMF}. The stated differential equation is the differential equation that corresponds to the hypergeometric series in the first term of the formula in Theorem \ref{I_2F2}. In this case, the two other solutions are of the form
    $$ x^{-\alpha} {}_2F_2 \left( \begin{array}{c} n+m,-m-\alpha-\nu+1 \\ -\alpha+1, -\alpha-\nu+1 \end{array}; -x \right), \quad 
    x^{\nu} {}_2F_2 \left( \begin{array}{c} n+m+\alpha+\nu,-m+1 \\ \alpha+\nu+1, \nu+1 \end{array}; -x \right).$$    
The type I function $F_{n,m}(x)$ thus arises as a linear combination of two solutions of a linear differential equation and is therefore a solution itself as well.
\end{proof}

We can use the differential equation for the type I function to obtain an explicit expression for the type I polynomial $B_{n,m}(x)$.

\begin{prop} \label{IB_2F2}
Suppose that $n+1\geq m$ and $\nu\not\in\Z$. Then
$$B_{n,m}(x) = B_{n,m}(0) \cdot {}_2F_2 \left( \begin{array}{c} -m+1 , n+m+\alpha+\nu \\ \nu+1 , \alpha+\nu+1 \end{array}; -x \right).$$
\end{prop}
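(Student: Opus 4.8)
The plan is to extract $B_{n,m}(x)$ from the structure of the type I function as given in Theorem~\ref{I_2F2}. Recall that $F_{n,m}(x) = A_{n,m}(x)e^{-x} + B_{n,m}(x) E_{\nu+1}(x)$, and that by \cite[Eq.~8.19.10]{DLMF} we have the expansion $E_{\nu+1}(x) = \Gamma(-\nu)x^\nu + \tfrac{1}{\nu}\,{}_1F_1(-\nu;-\nu+1;-x)$. Since $\nu \not\in \Z$, the function $x^\nu$ is not a polynomial (indeed not even analytic at $0$ when $\nu$ is not a nonnegative integer, and when $\nu$ is a positive integer we have excluded that case), so in the decomposition of $F_{n,m}$ into ``analytic part'' plus ``$x^\nu$ times analytic part'' the second piece must come entirely from the $\Gamma(-\nu)x^\nu$ term of $E_{\nu+1}$ multiplied by $B_{n,m}(x)$. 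Comparing with the two-term ${}_2F_2$ expression in Theorem~\ref{I_2F2}, the coefficient of $x^\nu$ there is
$$
\frac{(\alpha+\nu+1)_{n+m-1}\,\Gamma(-\nu)}{(m-1)!}\, {}_2F_2\!\left( \begin{array}{c} -m+1,\,n+m+\alpha+\nu \\ \nu+1,\,\alpha+\nu+1 \end{array}; -x \right),
$$
so matching gives immediately
$$
B_{n,m}(x) = \frac{(\alpha+\nu+1)_{n+m-1}}{(m-1)!}\, {}_2F_2\!\left( \begin{array}{c} -m+1,\,n+m+\alpha+\nu \\ \nu+1,\,\alpha+\nu+1 \end{array}; -x \right),
$$
which, after evaluating at $x=0$ to identify $B_{n,m}(0) = (\alpha+\nu+1)_{n+m-1}/(m-1)!$ (note the ${}_2F_2$ equals $1$ at $x=0$), is exactly the claimed formula. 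Note also that the upper parameter $-m+1$ truncates the series to a polynomial of degree $m-1$, consistent with $\deg B_{n,m} = m-1$.

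An alternative route, closer to the section heading and the remark preceding the proposition, is to use the differential equation from Proposition~\ref{I_diff_eq} directly: one knows that $x^\nu$ times the second ${}_2F_2$ listed in the proof of Proposition~\ref{I_diff_eq} is one of the three fundamental solutions of that third-order ODE. Because $F_{n,m}$ is a concrete solution and the three fundamental solutions have distinct behaviors at the origin — one analytic and nonvanishing (the first ${}_2F_2$), one behaving like $x^{-\alpha}$, and one behaving like $x^\nu$ — the component of $F_{n,m}$ proportional to the $x^\nu$-solution is uniquely pinned down. Then one identifies that $x^\nu$-component with $\Gamma(-\nu)x^\nu B_{n,m}(x)$, using that $E_{\nu+1}(x)$ contributes the only $x^\nu$ singular behavior. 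Either way the essential input is the same: the disambiguation of the polynomials $A_{n,m},B_{n,m}$ from $F_{n,m}$ by separating the $x^\nu$-part.

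The one point needing a little care — and the main (mild) obstacle — is the legitimacy of ``reading off the coefficient of $x^\nu$,'' i.e. that the decomposition $F_{n,m}(x) = g_1(x) + x^\nu g_2(x)$ with $g_1,g_2$ entire is unique. This holds precisely because $\nu\not\in\Z$: if $g_1+x^\nu g_2 = \tilde g_1 + x^\nu \tilde g_2$ with all four functions analytic near $0$, then $(g_1-\tilde g_1) = x^\nu(\tilde g_2 - g_2)$; the left side is analytic at $0$ while the right side, if $\tilde g_2 - g_2 \not\equiv 0$, has a leading term $c\,x^{\nu+k}$ with $\nu+k \notin \Z_{\ge 0}$, a contradiction, so $g_2 = \tilde g_2$ and hence $g_1 = \tilde g_1$. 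Applying this with $g_1 = A_{n,m}(x)e^{-x} + \tfrac{1}{\nu}B_{n,m}(x)\,{}_1F_1(-\nu;-\nu+1;-x)$ and $x^\nu g_2 = \Gamma(-\nu)x^\nu B_{n,m}(x)$ on one side, and the two ${}_2F_2$ terms of Theorem~\ref{I_2F2} (the first being entire, the second being $x^\nu$ times an entire function) on the other, forces $\Gamma(-\nu)B_{n,m}(x)$ to equal the prefactor-times-${}_2F_2$ in the second term of Theorem~\ref{I_2F2}, and the $\Gamma(-\nu)$ cancels. Evaluating at $x=0$ then rewrites the constant in the stated normalized form, completing the proof.
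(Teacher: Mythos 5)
Your proof is correct, but your primary argument takes a genuinely different route from the paper's. The paper substitutes $F_{n,m}=A_{n,m}(x)e^{-x}+(B_{n,m}(x)x^{\nu})\cdot\Gamma_{-\nu}(x)$ into the third-order differential equation of Proposition \ref{I_diff_eq}: since $(\Gamma_{-\nu}(x))'=-x^{-\nu-1}e^{-x}$, every derivative falling on $\Gamma_{-\nu}$ produces an $e^{-x}$-term, and the linear independence of $e^{-x}$ and $\Gamma_{-\nu}(x)$ over the polynomials forces $B_{n,m}(x)x^{\nu}$ to be a solution of the equation on its own; it is then identified with the unique fundamental solution of the form $x^{\nu}$ times a power series. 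You instead bypass the differential equation entirely: you compare the explicit two-term formula of Theorem \ref{I_2F2} with the decomposition obtained by inserting the expansion $E_{\nu+1}(x)=\Gamma(-\nu)x^{\nu}+\nu^{-1}\,{}_1F_1(-\nu;-\nu+1;-x)$ into the definition of $F_{n,m}$, and you justify matching the $x^{\nu}$-parts by the uniqueness of a decomposition $g_1+x^{\nu}g_2$ with $g_1,g_2$ analytic at $0$ and $\nu\notin\Z$ --- a point you correctly identify as the one step needing care, and which you prove correctly (note also that $\Gamma(-\nu)$ is finite and nonzero for $\nu\notin\Z$, so cancelling it is legitimate). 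Your route is more elementary and yields the explicit constant $B_{n,m}(0)=(\alpha+\nu+1)_{n+m-1}/(m-1)!$ for the normalization of Theorem \ref{I_2F2}, whereas the paper's argument additionally records the structurally useful fact that $B_{n,m}(x)x^{\nu}$ itself solves the hypergeometric differential equation; your sketched alternative via the fundamental solutions is essentially the paper's proof. Both arguments ultimately rest on Theorem \ref{I_2F2} (the paper's via Proposition \ref{I_diff_eq}), so neither is logically more economical.
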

\begin{proof}
Consider the differential equation in Proposition \ref{I_diff_eq}, but take the derivatives of $B_{n,m}(x)E_{\nu+1}(x)$ in $F_{n,m}(x)=A_{n,m}(x)e^{-x}+B_{n,m}(x) E_{\nu+1}(x)$ via $(B_{n,m}(x)x^{\nu})\cdot\Gamma_{-\nu}(x)$ and use that $(\Gamma_{-\nu}(x))'=-x^{-\nu-1}e^{-x}$. Since the functions $e^{-x}$ and $\Gamma_{-\nu}(x)$ are linearly independent over the polynomials (compare for example their Mellin transforms), $B_{n,m}(x)x^{\nu}$ itself must be a solution to the differential equation as well. Only one of the fundamental solutions listed in the proof of Proposition \ref{I_diff_eq} is of the form $x^{\nu}$ times a polynomial, hence we must have the stated result.
\end{proof}

\subsection{Recurrence relations}
It follows from the general theory of multiple orthogonal polynomials that type I (and type II) multiple orthogonal polynomials satisfy certain nearest neighbor recurrence relations, see, e.g., \cite{WVA}. For the normalized type I functions here, these are of the form
\begin{eqnarray}
  xF_{n,m}^{(I)}(x) &=& F_{n-1,m}^{(I)}(x)  + c_{n-1,m} F_{n,m}^{(I)}(x) + a_{n,m} F_{n+1,m}^{(I)}(x) + b_{n,m}F_{n,m+1}^{(I)}(x),    \label{NNRR_I_1} \\
  xF_{n,m}^{(I)}(x) &=& F_{n,m-1}^{(I)}(x) + d_{n,m-1} F_{n,m}^{(I)}(x) +  a_{n,m} F_{n+1,m}^{(I)}(x) + b_{n,m}F_{n,m+1}^{(I)}(x).    \label{NNRR_I_2}
\end{eqnarray}

We will determine the coefficients in two ways. Our first proof will be based on the three-term recurrence relation for Laguerre polynomials, which is (see \cite{Szego})
\begin{equation} \label{Laguerre_RR}
    -xL_n^{(\alpha)}(x) = (n+1) L_{n+1}^{(\alpha)}(x) - (2n+\alpha+1)L_n^{(\alpha)}(x) + (n+\alpha) L_{n-1}^{(\alpha)}(x).
\end{equation}
In the second proof, we will only use the Mellin transforms of the type I functions. It will be more straightforward but less constructive.
\medbreak

The first proof requires the following connection formula. A posteriori, it can also be obtained by subtracting the two nearest neighbor recurrence relations \eqref{NNRR_I_1}--\eqref{NNRR_I_2}.

\begin{prop}
One has
 $$ F_{n,m-1}^{(I)}(x)-F_{n-1,m}^{(I)}(x) = \frac{(n+m-1)(n+m+\alpha-1)(n+m+\alpha+\nu-1)}{(n+2m+\alpha+\nu-2)(n+2m+\alpha+\nu-1)} F_{n,m}^{(I)}(x). $$
\end{prop}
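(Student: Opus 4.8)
The plan is to verify the identity by comparing Mellin transforms, which is the most economical route given that Lemma \ref{I_Mellin_lem} already hands us $\hat F_{n,m}(s)$ in closed form. Recall that, by Theorem \ref{I_conv} (or directly from Lemma \ref{I_Mellin_lem}), the normalized type I function $F_{n,m}^{(I)}$ has Mellin transform
$$ \mathcal{F}_{n,m}\,\frac{\Gamma(s)}{(s+\nu)_m}(\alpha+1-s)_{n+m-1}, $$
with $\mathcal{F}_{n,m}$ the explicit constant in \eqref{I_norm_ct}. The claimed relation involves the three multi-indices $(n,m)$, $(n-1,m)$ and $(n,m-1)$, all of which satisfy the hypothesis $n+1\geq m$ whenever $(n,m)$ does (here one should note the mild caveat that $(n-1,m)$ requires $n\geq m$, i.e.\ the relation as stated is for $n\geq m$; the case $n+1=m$ must be checked separately or the statement read with the convention that $F_{-1,\cdot}$ or the corresponding index is handled by the neighbouring recurrence). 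So I would take the Mellin transform of both sides and reduce everything to an identity between rational functions of $s$ times the common factor $\Gamma(s)/(s+\nu)_m$.

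Concretely, the Mellin transform of the left-hand side is
$$ \frac{(m-1)!}{(n+m-2)!}\,\mathcal{F}_{n,m-1}\,\frac{\Gamma(s)}{(s+\nu)_{m-1}}(\alpha+1-s)_{n+m-2} \;-\; \frac{m!}{(n+m-2)!}\,\mathcal{F}_{n-1,m}\,\frac{\Gamma(s)}{(s+\nu)_m}(\alpha+1-s)_{n+m-2}, $$
wait --- more carefully, $F_{n,m}^{(I)} = (n+m-1)!\,\mathcal{F}_{n,m}\,F_{n,m}/(m-1)!$, so it is cleanest to just use the ``$\hat F_{n,m}^{(I)}(s) = \mathcal{F}_{n,m}\hat F_{n,m}(s)$'' normalization directly: the Mellin transform of $F_{n,m-1}^{(I)} - F_{n-1,m}^{(I)}$ equals
$$ \Gamma(s)\left[\frac{\mathcal{F}_{n,m-1}}{(s+\nu)_{m-1}}(\alpha+1-s)_{n+m-2} - \frac{\mathcal{F}_{n-1,m}}{(s+\nu)_m}(\alpha+1-s)_{n+m-2}\right]. $$
Pulling out $(\alpha+1-s)_{n+m-2}\,\Gamma(s)/(s+\nu)_m$, the bracket becomes $\mathcal{F}_{n,m-1}(s+\nu+m-1) - \mathcal{F}_{n-1,m}$, which is an \emph{affine} function of $s$. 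For this to equal a constant times $\hat F_{n,m}^{(I)}(s) = \mathcal{F}_{n,m}\,\Gamma(s)(s+\nu)_m^{-1}(\alpha+1-s)_{n+m-1}$, i.e.\ a constant times $(\alpha+1-s)_{n+m-2}\,\Gamma(s)(s+\nu)_m^{-1}\cdot(\alpha+1-(n+m-1)-s) \cdot(-1)$... one sees the surviving linear factor must match $(\alpha+n+m-1-s)$ up to scalar. So the real content is a two-line check: (i) the coefficient of $s$ forces $\mathcal{F}_{n,m-1} = -\,(\text{proportionality const})\cdot\mathcal{F}_{n,m}$, and (ii) evaluating at the single point $s=-\nu-m+1$ (which kills $(s+\nu+m-1)$) pins down the constant as $-\mathcal{F}_{n-1,m}$ divided by $\mathcal{F}_{n,m}\cdot(\alpha+1-s)_{n+m-2}(\alpha+n+m-1-s)\big/(\alpha+1-s)_{n+m-1}$ evaluated there --- then one checks this constant equals the stated ratio.

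The only genuine obstacle, then, is the bookkeeping with the explicit constants $\mathcal{F}_{n,m}$, $\mathcal{F}_{n-1,m}$, $\mathcal{F}_{n,m-1}$ from \eqref{I_norm_ct}: one has to simplify ratios of products of Pochhammer symbols like $(\alpha+\nu+1)_{n+2m-1}/(\alpha+\nu+1)_{n+2m-3}$, $(\alpha+1)_{n+m-1}/(\alpha+1)_{n+m-2}$, and so on, and confirm that the resulting rational function in $(n,m,\alpha,\nu)$ is exactly
$$ \frac{(n+m-1)(n+m+\alpha-1)(n+m+\alpha+\nu-1)}{(n+2m+\alpha+\nu-2)(n+2m+\alpha+\nu-1)}. $$
I expect the factor $(n+m-1)(n+m+\alpha-1)$ to come from the ``last term'' of the Pochhammer symbols $(\alpha+1)_{n+m-1}$ and the extra unit in $(n+m-1)!$ versus $(n+m-2)!$, the factor $(n+m+\alpha+\nu-1)$ from $(\alpha+\nu+1)_{n+m-1}$, and the denominator $(n+2m+\alpha+\nu-2)(n+2m+\alpha+\nu-1)$ from the two ``new'' factors in $(\alpha+\nu+1)_{n+2m-1}$ compared to the index shifts. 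This is routine but error-prone; the cleanest safeguard is to do the affine-function comparison abstractly first (so that only \emph{one} scalar needs to be identified) and then compute that scalar. Finally I would remark, as the statement itself notes, that the identity is consistent with subtracting \eqref{NNRR_I_1} from \eqref{NNRR_I_2}, which gives an independent sanity check once the recurrence coefficients $c_{n-1,m}$ and $d_{n,m-1}$ are known.
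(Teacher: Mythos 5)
Your proposal is correct and follows essentially the same route as the paper: both compare Mellin transforms via Lemma \ref{I_Mellin_lem}, reduce the identity to matching an affine function of $s$ against the factor $(\alpha+n+m-1-s)$, and pin down the constant from the ratios $\mathcal{F}_{n,m}/\mathcal{F}_{n,m-1}$ and $\mathcal{F}_{n,m}/\mathcal{F}_{n-1,m}$ of \eqref{I_norm_ct}. Your side remark that the index $(n-1,m)$ forces $n\geq m$ for the Mellin formula to apply is a fair observation that the paper glosses over, but it does not change the substance of the argument.
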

\begin{proof}
In terms of the type I functions with Mellin transforms \eqref{I_Mellin}, we have 
$$ \hat{F}_{n-1,m}(s) = \frac{1}{\alpha+n+m-s-1} \hat{F}_{n,m}(s)$$
and 
$$ \hat{F}_{n,m-1}(s) = \frac{s+\nu+m-1}{\alpha+n+m-s-1} \hat{F}_{n,m}(s). $$
Hence
    $$ (\alpha+\nu+n+2m-2) \hat{F}_{n-1,m}(s) - \hat{F}_{n,m-1}(s) = \hat{F}_{n,m}(s) ,$$
which for the normalized functions becomes
    $$ (n+2m+\alpha+\nu-2) \frac{\mathcal{F}_{n,m}}{\mathcal{F}_{n-1,m}} \hat{F}_{n-1,m}^{(I)}(x) - \frac{\mathcal{F}_{n,m}}{\mathcal{F}_{n,m-1}} \hat{F}_{n,m-1}^{(I)}(x) = \hat{F}_{n,m}^{(I)}(x). $$
If we plug in the values of the ratios
\begin{eqnarray*}
    \frac{\mathcal{F}_{n,m}}{\mathcal{F}_{n-1,m}} &=& - \frac{(n + 2m + \alpha + \nu - 1)}{(n + m - 1)(n + m + \alpha - 1)(n + m + \alpha + \nu - 1)} , \label{ratio_norm_ct_1} \\
    \frac{\mathcal{F}_{n,m}}{\mathcal{F}_{n,m-1}} &=& - \frac{(n + 2m + \alpha + \nu - 2)(n + 2m + \alpha + \nu - 1)}{(n + m - 1)(n + m + \alpha - 1)(n + m + \alpha + \nu - 1)}, \label{ratio_norm_ct_2}
\end{eqnarray*}
the desired result follows after applying the inverse Mellin transform.
\end{proof}
\medbreak

We can now derive the nearest neighbor recurrence relations for the normalized type I functions.

\begin{thm} \label{NNRR_coeff}
The coefficients in the nearest neighbor recurrence relations \eqref{NNRR_I_1}--\eqref{NNRR_I_2} are given by  
$$\begin{aligned}
       a_{n,m} &= \frac{(n+m)(n+m+\alpha)(n+m+\alpha+\nu)}{n+2m+\alpha+\nu} , \\
       b_{n,m} &= - \frac{(n+m)(n+m+\alpha)(n+m+\alpha+\nu)m(m+\nu)(m+\alpha+\nu)}{(n+2m+\alpha+\nu-1)(n+2m+\alpha+\nu)^2(n+2m+\alpha+\nu+1)}, \\
       c_{n-1,m} &= a_{n,m}-a_{n-1,m} , \\
       d_{n,m-1} &=  a_{n,m}-a_{n,m-1}.
\end{aligned}$$
\end{thm}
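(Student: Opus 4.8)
The plan is to move everything to the Mellin side, where multiplication by $x$ becomes the shift $s\mapsto s+1$, so that the recurrences \eqref{NNRR_I_1}--\eqref{NNRR_I_2} become identities between explicit rational functions of $s$; matching leading terms and evaluating at two well-chosen points then reads off all four coefficients. A second, more constructive route via the Laguerre three-term recurrence \eqref{Laguerre_RR} inserted into the convolution formula \eqref{F} is also possible, but it is messier because a shift in $m$ also alters the beta-density factor, so I would present the Mellin argument.

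Applying the Mellin transform to \eqref{NNRR_I_1}, using $\int_0^\infty xF_{n,m}^{(I)}(x)x^{s-1}\,dx=\hat F_{n,m}^{(I)}(s+1)$ and writing $\hat F_{n,m}^{(I)}=\mathcal F_{n,m}\hat F_{n,m}$, gives the functional identity
$$\mathcal F_{n,m}\hat F_{n,m}(s+1)=\mathcal F_{n-1,m}\hat F_{n-1,m}(s)+c_{n-1,m}\mathcal F_{n,m}\hat F_{n,m}(s)+a_{n,m}\mathcal F_{n+1,m}\hat F_{n+1,m}(s)+b_{n,m}\mathcal F_{n,m+1}\hat F_{n,m+1}(s),$$
which by injectivity of the Mellin transform is equivalent to \eqref{NNRR_I_1}. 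From the closed form \eqref{I_Mellin} one reads off the elementary ratios
$$\frac{\hat F_{n,m}(s+1)}{\hat F_{n,m}(s)}=\frac{s(s+\nu)(\alpha-s)}{(s+\nu+m)(\alpha+n+m-1-s)},\qquad\frac{\hat F_{n+1,m}(s)}{\hat F_{n,m}(s)}=\alpha+n+m-s,\qquad\frac{\hat F_{n,m+1}(s)}{\hat F_{n,m}(s)}=\frac{\alpha+n+m-s}{s+\nu+m},$$
together with $\hat F_{n-1,m}(s)/\hat F_{n,m}(s)=1/(\alpha+n+m-1-s)$ from the preceding proposition. Substituting and clearing the denominator $(s+\nu+m)(\alpha+n+m-1-s)$ turns this into a polynomial identity in $s$ of degree $3$; since the recurrence is already known to exist (general theory, \cite{WVA}), the identity holds for the true coefficients, so it suffices to impose three independent conditions on it.

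I would extract the coefficients as follows. Comparing the coefficient of $s^3$ gives $a_{n,m}=-\mathcal F_{n,m}/\mathcal F_{n+1,m}$, which is the claimed value once the ratio of normalization constants is taken from \eqref{I_norm_ct} (it is the reciprocal of the already-recorded ratio $\mathcal F_{n,m}/\mathcal F_{n-1,m}$ with $n$ shifted to $n+1$). Evaluating the polynomial identity at $s=-\nu-m$ kills the $\mathcal F_{n-1,m}$-, $c$- and $a$-terms and leaves $\mathcal F_{n,m}\,m(m+\nu)(m+\alpha+\nu)=b_{n,m}\mathcal F_{n,m+1}(n+2m+\alpha+\nu)(n+2m+\alpha+\nu-1)$, yielding $b_{n,m}$ after inserting $\mathcal F_{n,m}/\mathcal F_{n,m+1}$. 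Evaluating at $s=\alpha+n+m$ kills the $a$- and $b$-terms and leaves $c_{n-1,m}=a_{n,m}+\mathcal F_{n-1,m}/\mathcal F_{n,m}$, and since $\mathcal F_{n-1,m}/\mathcal F_{n,m}=-a_{n-1,m}$ this is exactly $c_{n-1,m}=a_{n,m}-a_{n-1,m}$. The parallel computation starting from \eqref{NNRR_I_2}, using $\hat F_{n,m-1}(s)/\hat F_{n,m}(s)=(s+\nu+m-1)/(\alpha+n+m-1-s)$, produces $d_{n,m-1}=a_{n,m}-a_{n,m-1}$; alternatively $d_{n,m-1}$ is forced by $c_{n-1,m}$ together with the connection formula of the preceding proposition.

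The only real difficulty is bookkeeping: correctly simplifying the ratios $\mathcal F_{n,m}/\mathcal F_{n\pm1,m}$ and $\mathcal F_{n,m}/\mathcal F_{n,m\pm1}$ out of the Pochhammer expression \eqref{I_norm_ct}, and carrying out the clear-denominators step without error. No inverse Mellin transform is needed at the end, since the polynomial identity is equivalent to the asserted recurrence.
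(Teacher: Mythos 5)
Your proof is correct and takes essentially the same route as the paper's ``alternative proof'' of this theorem: transfer the recurrence to the Mellin side via \eqref{I_Mellin}, clear denominators to get a cubic polynomial identity in $s$, and solve for the coefficients (the paper's first proof instead works through the Laguerre three-term recurrence inside the convolution formula \eqref{F} plus the connection formula). Your endgame is a little cleaner than the paper's, which compares all powers of $s$ and solves the resulting linear system with a computer algebra system, whereas you read off $a_{n,m}$ from the $s^3$ coefficient and get $b_{n,m}$ and $c_{n-1,m}$ by evaluating at $s=-\nu-m$ and $s=\alpha+n+m$; this is legitimate because, granting the existence and form of \eqref{NNRR_I_1}--\eqref{NNRR_I_2} from the general theory, only three coefficients remain to be determined.
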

\begin{proof} 
Consider the type I functions in \eqref{F}. We can pass the multiplication by $x$ through the convolution as follows
$$xF_{n,m}(x) =  \int_x^\infty \left[t L_{n+m-1}^{(\alpha)}(t)\right] e^{-t} \left[\frac{x}{t} \left( 1-\frac{x}{t} \right)^{m-1}\right] \left( \frac{x}{t} \right)^{\nu} \frac{dt}{t}.$$
If we then combine the three-term recurrence relation \eqref{Laguerre_RR} for Laguerre polynomials and the recurrence relation $x(1-x)^{m-1} = (1-x)^{m-1}-(1-x)^m$, we find
$$ \begin{aligned}
xF_{n,m}(x) =\ & (n+m) [F_{n,m+1}(x) - F_{n+1,m}] \\  
               & -\, (2n+2m+\alpha-1) [F_{n-1,m+1}(x) - F_{n,m}(x)] \\
               & +\, (n+m+\alpha-1)[F_{n-2,m+1} - F_{n-1,m}(x)],
\end{aligned} $$
For the normalized type I functions, this becomes
$$ \begin{aligned}
xF_{n,m}^{(I)}(x) =\ & \left[ \frac{m \mathcal{F}_{n,m}}{\mathcal{F}_{n,m+1}} F_{n,m+1}^{(I)}(x) - \frac{\mathcal{F}_{n,m}}{\mathcal{F}_{n+1,m}} F_{n+1,m}^{(I)}(x) \right] \\  
               & -\ (2n+2m+\alpha-1) \left[ \frac{m \mathcal{F}_{n,m}}{\mathcal{F}_{n-1,m+1}}F_{n-1,m+1}^{(I)}(x)-F_{n,m}^{(I)}(x) \right] \\
               & +\ (n+m-1)(n+m+\alpha-1)\left[ \frac{m \mathcal{F}_{n,m}}{\mathcal{F}_{n-2,m+1}} F_{n-2,m+1}^{(I)}(x)-\frac{\mathcal{F}_{n,m}}{\mathcal{F}_{n-1,m}} F_{n-1,m}^{(I)}(x) \right].
\end{aligned} $$
In order to obtain the nearest neighbor recurrence relations, we need to get rid of the terms $F_{n-1,m+1}^{(I)}(x)$ and $F_{n-2,m+1}^{(I)}(x)$. We can do this by making use of the connection formula. One has
$$F_{n-2,m+1}^{(I)}(x) = F_{n-1,m}^{(I)}(x) - C_{n-1,m+1} F_{n-1,m+1}^{(I)}(x),$$
$$F_{n-1,m+1}^{(I)}(x) = F_{n,m}^{(I)}(x) -  C_{n,m+1} F_{n,m+1}^{(I)}(x).$$
The coefficient of the function $F_{n-1,m}^{(I)}(x)$ then becomes
    $$  (n+m-1)(n+m+\alpha+\nu-1) \left[\frac{m \mathcal{F}_{n,m}}{\mathcal{F}_{n-2,m+1}} - \frac{\mathcal{F}_{n,m}}{\mathcal{F}_{n-1,m}} \right]  = 1. $$
The other coefficients become, using the same notation as in \eqref{NNRR_I_1},
$$\begin{aligned}
       a_{n,m} = &-\frac{\mathcal{F}_{n,m}}{\mathcal{F}_{n+1,m}} , \\
       b_{n,m} = &\,\frac{m \mathcal{F}_{n,m}}{\mathcal{F}_{n,m+1}} + (2n+2m+\alpha-1) \frac{m\mathcal{F}_{n,m}}{\mathcal{F}_{n-1,m+1}} C_{n,m+1} \\
       &+ (n+m-1)(n+m+\alpha-1) \frac{m\mathcal{F}_{n,m}}{\mathcal{F}_{n-2,m+1}} C_{n-1,m+1} C_{n,m+1}, \\
       c_{n-1,m} = &-\ (2n+2m+\alpha-1) \left[\frac{m \mathcal{F}_{n,m}}{\mathcal{F}_{n-1,m+1}}-1\right] \\
       &+ (n+m-1)(n+m+\alpha-1) \frac{m \mathcal{F}_{n,m}}{\mathcal{F}_{n-2,m+1}} C_{n-1,m+1} C_{n-1,m+1} .
\end{aligned}$$
After some computations, we obtain the expressions described in the theorem. For \eqref{NNRR_I_2}, we can use the connection formula to replace $F_{n-1,m}^{(I)}(x)$ in \eqref{NNRR_I_1} by $F_{n,m-1}^{(I)}(x)$ and an extra term containing $F_{n,m}^{(I)}(x)$.
\end{proof}
\medbreak

\textit{Alternative proof of Theorem \ref{NNRR_coeff}.}
Consider the type I functions with Mellin transform \eqref{I_Mellin}. We may take the Mellin transform of the linear form
    $$ c_1 F_{n-1,m}(x)  + c_2 F_{n,m}(x) + c_3 F_{n+1,m}(x) + c_4 F_{n,m+1}(x) $$
and show that for suitable values of the coefficients it will be equal to the Mellin transform of $xF_{n,m}(x)$. By converting to the normalized type I functions, we should then end up with the described recurrence coefficients. After working out the described Mellin transforms, we would like to have 
$$\begin{aligned}
    &c_1(s + \nu + m) + c_2(\alpha - s + n + m - 1)(s + \nu + m) \\
    &+ c_3(\alpha - s + n + m - 1)(\alpha - s + n + m)(s + \nu + m) \\
    &+ c_4(\alpha - s + n + m - 1)(\alpha - s + n + m) \\
    &= s(s + \nu)(\alpha - s).
\end{aligned}$$
By comparing the coefficients of $s$ on both sides of the equation, we obtain four equations for the four unknown $c_j$. We can solve this system of equations via a computer algebra system. We find
$$\begin{aligned}
    c_1 &= -\frac{(n + m - 1)(n + m + \alpha + \nu - 1)(n + m + \alpha - 1)}{n + 2m + \alpha + \nu - 1}, \\
    c_2 &= 2n + m + \alpha - 1 + \frac{(m + n)m(m + \nu)}{n + 2m + \alpha + \nu} - \frac{(m + n - 1)m(m + \nu)}{n + 2m + \alpha + \nu - 1}, \\
    c_3 &= -1, \\
    c_4 &= \frac{m(m + \alpha + \nu)(m + \nu)}{(\alpha + \nu + n + 2m)(n + 2m + \alpha + \nu - 1)}.
\end{aligned}$$
The associated recurrence relation for the normalized functions then has coefficients
$$\begin{aligned}
    c_1^{(I)} &= c_1 \mathcal{F}_{n,m}/\mathcal{F}_{n-1,m} = - c_1 \frac{(n + 2m + \alpha + \nu - 1)}{(n + m - 1)(n + m + \alpha - 1)(n + m + \alpha + \nu - 1)} = 1, \\
    c_2^{(I)} &= c_2 = a_{n,m} - a_{n-1,m} , \\
    c_3^{(I)} &= c_3 \mathcal{F}_{n,m}/\mathcal{F}_{n+1,m} = - c_3 \frac{(n + m)(n + m + \alpha)(n + m + \alpha + \nu)}{n + 2m + \alpha} = a_{n,m}, \\
    c_4^{(I)} &= c_4 \mathcal{F}_{n,m}/\mathcal{F}_{n,m+1} = - c_4 \frac{(n + m)(n + m + \alpha)(n + m + \alpha + \nu)}{(n + 2m + \alpha + \nu)(n + 2m + \alpha + \nu + 1)} = b_{n,m}.
\end{aligned}$$

In order to obtain the formula for $c_2$, we used the fact that 
    $$ a_{n,m} = (n+m)(n+\alpha) + \frac{(n+m)m(m+\nu)}{n+2m+\alpha+\nu},$$
which follows from a partial decomposition of $a_{n,m}$ as a function in $\alpha$.
\qed

\section{Type II multiple orthogonal polynomials} \label{STII}

A type II multiple orthogonal polynomial for the two weight functions $(x^\alpha e^{-x},x^\alpha E_{\nu+1}(x))$ that corresponds to the multi-index $(n,m)$ is a polynomial $P_{n,m}$ with $\deg P_{n,m} = n+m$ that satisfies the orthogonality relations
\begin{eqnarray*}
   \int_0^\infty P_{n,m}(x) e^{-x} x^{k+\alpha} dx = 0,\quad k=0,\dots,n-1, \\
   \int_0^\infty P_{n,m}(x) E_{\nu+1}(x) x^{k+\alpha} dx = 0,\quad k=0,\dots,m-1.
\end{eqnarray*}
Such polynomials (having maximal degree) exist for every multi-index because of the perfectness of the system and they are unique up to a non-zero scalar multiplication. The normalized type II polynomial, which is monic, will be denoted by $P_{n,m}^{(II)}$. Similarly as before, we will only consider multi-indices $(n,m)$ with $n+1\geq m$.

\subsection{Differentiation formula}
The type II polynomials will also be connected to the Laguerre polynomials; now the connection will come in the form of a differential formula. In order to obtain this formula, we require the following reduction property of the type II polynomials. Note that we explicitly wrote down the dependence of the type II polynomial $P_{n,m}=P_{n,m}^{\alpha,\nu}$ on the parameters.

\begin{lem} \label{II_red}
Suppose that $n\geq m+1$. Then
    $$ \frac{d}{dx}\left[x^{\alpha+\nu} P_{n,m}^{\alpha,\nu}(x)\right] \propto x^{\alpha+\nu-1} P_{n-1,m+1}^{\alpha,\nu-1}(x), $$
where the symbol $\propto$ means equality up to a non-zero scalar multiplication.
\end{lem}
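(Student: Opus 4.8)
\textit{Proof proposal.} The plan is to show that
$$Q(x):=x^{1-\alpha-\nu}\,\frac{d}{dx}\!\left[x^{\alpha+\nu}P_{n,m}^{\alpha,\nu}(x)\right]$$
is a nonzero polynomial of degree $n+m$ that satisfies exactly the type II orthogonality conditions characterising $P_{n-1,m+1}^{\alpha,\nu-1}$, and then to invoke uniqueness (perfectness of the system) to conclude $Q\propto P_{n-1,m+1}^{\alpha,\nu-1}$. That $Q$ is a polynomial of degree $n+m$ is immediate: writing $P_{n,m}^{\alpha,\nu}(x)=\sum_{j=0}^{n+m}p_jx^j$ gives $Q(x)=\sum_{j=0}^{n+m}(\alpha+\nu+j)p_jx^j$, whose leading coefficient $(\alpha+\nu+n+m)p_{n+m}$ is nonzero because $\alpha+\nu>-1$ and $n+m\ge1$. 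Note that $(n-1,m+1)$ is admissible for the convention used throughout (namely $(n-1)+1\ge m+1$) precisely when $n\ge m+1$, which is the hypothesis, and the weight attached to its second component is $x^\alpha E_{(\nu-1)+1}(x)=x^\alpha E_\nu(x)$; I assume the standing parameter restrictions also hold for the shifted parameter $\nu-1$ (in particular $\alpha+\nu>0$, which is exactly what keeps the moments of $x^\alpha E_\nu$ finite).

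For the $n-1$ conditions against $x^\alpha e^{-x}$, since $Q(x)x^{\alpha+\nu-1}=\frac{d}{dx}[x^{\alpha+\nu}P_{n,m}^{\alpha,\nu}(x)]$, I would write $\int_0^\infty Q(x)x^{k+\alpha}e^{-x}\,dx=\int_0^\infty x^{k+1-\nu}e^{-x}\,\frac{d}{dx}[x^{\alpha+\nu}P_{n,m}^{\alpha,\nu}(x)]\,dx$ and integrate by parts. The boundary term carries a factor $x^{k+1+\alpha}e^{-x}P_{n,m}^{\alpha,\nu}(x)$, which vanishes at both endpoints ($\alpha>-1$, $k\ge0$ at $0$; exponential decay at $\infty$), and since $\frac{d}{dx}[x^{k+1-\nu}e^{-x}]=x^{k-\nu}e^{-x}\bigl((k+1-\nu)-x\bigr)$ the integral reduces to a linear combination of $\int_0^\infty x^{k+\alpha}e^{-x}P_{n,m}^{\alpha,\nu}(x)\,dx$ and $\int_0^\infty x^{k+1+\alpha}e^{-x}P_{n,m}^{\alpha,\nu}(x)\,dx$, both of which vanish for $k\le n-2$ by the orthogonality of $P_{n,m}^{\alpha,\nu}$.

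The main work is the $m+1$ conditions against $x^\alpha E_\nu(x)$. I would again integrate by parts in $\int_0^\infty x^{k+1-\nu}E_\nu(x)\,\frac{d}{dx}[x^{\alpha+\nu}P_{n,m}^{\alpha,\nu}(x)]\,dx$, but now $\frac{d}{dx}[x^{k+1-\nu}E_\nu(x)]$ must be simplified. Using $E_\nu'(x)=-E_{\nu-1}(x)$ together with the exponential-integral recurrence $xE_{\nu-1}(x)=e^{-x}-(\nu-1)E_\nu(x)$, this derivative collapses to $x^{k-\nu}\bigl(kE_\nu(x)-e^{-x}\bigr)$. Hence $\int_0^\infty Q(x)x^{k+\alpha}E_\nu(x)\,dx$ equals a boundary term (which vanishes, using $\alpha+\nu>0$ at $0$ and exponential decay at $\infty$) plus $\int_0^\infty x^{k+\alpha}e^{-x}P_{n,m}^{\alpha,\nu}(x)\,dx-k\int_0^\infty x^{k+\alpha}E_\nu(x)P_{n,m}^{\alpha,\nu}(x)\,dx$. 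The first integral vanishes for $k\le n-1$. For the second, when $k\ge1$ the spare factor of $x$ lets me apply the other recurrence $xE_\nu(x)=e^{-x}-\nu E_{\nu+1}(x)$, rewriting it in terms of $\int_0^\infty x^{k-1+\alpha}e^{-x}P_{n,m}^{\alpha,\nu}(x)\,dx$ and $\int_0^\infty x^{k-1+\alpha}E_{\nu+1}(x)P_{n,m}^{\alpha,\nu}(x)\,dx$, which vanish for $k\le n$ and $k\le m$ respectively (for $k=0$ there is nothing to check since the term is multiplied by $k$). Since $n\ge m+1$ forces $n-1\ge m$, all the conditions hold for $0\le k\le m$. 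The anticipated obstacle is exactly locating this pair of recurrences so that the $E_\nu$-moments of the derivative feed back into the orthogonality of $P_{n,m}^{\alpha,\nu}$, which is stated with respect to $e^{-x}$ and $E_{\nu+1}$ rather than $E_\nu$; the bookkeeping of the boundary terms near $0$ is routine but is where the restriction $\alpha+\nu>0$ for the shifted system enters. Having produced a nonzero polynomial of degree $n+m$ carrying the $n-1$ and $m+1$ orthogonality conditions of the multi-index $(n-1,m+1)$ for the weights $(x^\alpha e^{-x},x^\alpha E_\nu(x))$, uniqueness yields $Q\propto P_{n-1,m+1}^{\alpha,\nu-1}$.
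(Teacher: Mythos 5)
Your proposal is correct and follows essentially the same route as the paper: integrate by parts in both families of moment integrals, simplify $\frac{d}{dx}\bigl[x^{k-\nu+1}E_{\nu}(x)\bigr]$ to $x^{k-\nu}\bigl(kE_{\nu}(x)-e^{-x}\bigr)$, and feed the result back into the orthogonality of $P_{n,m}^{\alpha,\nu}$ via the recurrence $\nu E_{\nu+1}(x)+xE_{\nu}(x)=e^{-x}$, concluding by uniqueness. The only (cosmetic) difference is that you obtain that derivative identity from $E_{\nu}'=-E_{\nu-1}$ and the three-term recurrence, whereas the paper uses $E_{\nu}(x)=x^{\nu-1}\Gamma_{-\nu+1}(x)$; your explicit checks of the leading coefficient and of the boundary term at $0$ (requiring $\alpha+\nu>0$ for the shifted parameter) are slightly more careful than the paper's.
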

\begin{proof}
After working out the derivative, it is clear that the left hand side is of the form $x^{\alpha+\nu-1}$ times a polynomial of degree $n+m$. It then remains to verify that it satisfies the required orthogonality conditions, i.e.,
    $$ \int_0^\infty \frac{d}{dx}\left[x^{\alpha+\nu} P_{n,m}^{\alpha,\nu}(x)\right] e^{-x} x^{k-\nu+1} dx = 0,\quad k=0,\dots,n-2,  $$
    $$ \int_0^\infty \frac{d}{dx}\left[x^{\alpha+\nu} P_{n,m}^{\alpha,\nu}(x)\right] E_{\nu}(x) x^{k-\nu+1} dx = 0,\quad k=0,\dots,m. $$
We can rewrite both integrals via integration by parts. The first one becomes
$$\left[P_{n,m}^{\alpha,\nu}(x) e^{-x} x^{k+\alpha+1}\right]_0^{\infty} -\int_0^\infty x^{\alpha+\nu} P_{n,m}^{\alpha,\nu}(x) \frac{d}{dx}\left[e^{-x} x^{k-\nu+1}\right] dx.$$
The restriction $\alpha>-1$ implies that the first part will vanish. After working out the derivative, we thus get
$$ -\int_0^\infty P_{n,m}^{\alpha,\nu}(x) e^{-x} x^{k+\alpha} (-x+(k-\nu+1)) dx.$$
This vanishes for $k=0,\dots,n-2$ due to the orthogonality of $P_{n,m}^{\alpha,\nu}$ with respect to the first weight.
The other integral becomes
$$\left[P_{n,m}^{\alpha,\nu}(x) E_{\nu}(x) x^{k+\alpha+1}\right]_0^{\infty} -\int_0^\infty x^{\alpha+\nu} P_{n,m}^{\alpha,\nu}(x) \frac{d}{dx}\left[E_{\nu}(x) x^{k-\nu+1}\right] dx,$$
where the first part vanishes again because $\alpha>-1$. It is convenient to use the relation $E_{\nu}(x) = x^{\nu-1} \Gamma_{-\nu+1}(x)$ when computing the derivative
    $$ \frac{d}{dx}\left[E_{\nu}(x) x^{k-\nu+1}\right] = -x^{k-\nu} e^{-x} + k x^{k-1} \Gamma_{-\nu+1}(x) 
    = - x^{k-\nu} (e^{-x} - k E_{\nu}(x)). $$
We can use the recurrence relation $\nu E_{\nu+1}(x)+xE_{\nu}(x)=e^{-x}$, see \cite[Eq. 8.19.12]{DLMF}, to write this in terms of $E_{\nu+1}(x)$. The integral then becomes
    $$ \int_0^\infty P_{n,m}^{\alpha,\nu}(x) x^{k+\alpha-1} ((x-k)e^{-x}+k\nu E_{\nu+1}(x)) dx.$$
For $k=0$, this is equal to
$$\int_0^\infty P_{n,m}^{\alpha,\nu}(x) x^{\alpha} e^{-x} dx =0,$$
while for $k\geq 1$, we have
    $$ \int_0^\infty P_{n,m}^{\alpha,\nu}(x) x^{k+\alpha-1} (x-k)e^{-x} dx = 0, \qquad \text{ if } k\leq n-1$$
and 
    $$ \int_0^\infty P_{n,m}^{\alpha,\nu}(x) x^{k+\alpha-1} E_{\nu+1}(x) dx = 0, \qquad \text{ if } k\leq m.$$
Since $m\leq n-1$, the desired orthogonality conditions with respect to the second weight are satisfied as well.
\end{proof}

The previous property allows us to express the type II polynomial in terms of a Laguerre polynomial.   

\begin{thm}
Suppose that $n+1\geq m$. The monic type II multiple orthogonal polynomial $P_{n,m}^{(II)}$ is given by $\mathcal{P}_{n,m}\cdot P_{n,m}$, where
\begin{equation} \label{II_diff}
    P_{n,m}(x) = x^{-\alpha-\nu} \frac{d^{m}}{dx^m}\left[x^{m+\alpha+\nu} L_{n+m}^{(\alpha)}(x)\right]. 
\end{equation}
and
$$\mathcal{P}_{n,m} = (-1)^{n+m} (n+m)! \frac{(\alpha+\nu+1)_{n+m}}{(\alpha+\nu+1)_{n+2m}} .$$
\end{thm}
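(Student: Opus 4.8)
The plan is to mimic the strategy used for the type I functions: first verify that the polynomial $P_{n,m}(x)$ in \eqref{II_diff} is a genuine (unnormalized) type II polynomial, i.e., that it has degree $n+m$ and satisfies the required orthogonality conditions, and then separately pin down the monic normalization constant $\mathcal{P}_{n,m}$. For the first part, I would argue by induction on $m$ using Lemma \ref{II_red} read in reverse. When $m=0$, \eqref{II_diff} reads $P_{n,0}(x)=L_n^{(\alpha)}(x)$, which is the (non-monic) type II polynomial for the single weight $x^\alpha e^{-x}$ since only the first family of orthogonality conditions is present; this is the base case. For the inductive step, observe that applying the operator $x^{-\alpha-\nu}\frac{d}{dx}\bigl[x^{\alpha+\nu}\,\cdot\,\bigr]$ to $P_{n,m-1}^{\alpha,\nu+1}(x)$ produces, by Lemma \ref{II_red} (with the parameter $\nu$ there replaced by $\nu+1$ and the multi-index shifted), a scalar multiple of $P_{n-1,m}^{\alpha,\nu}(x)$; iterating and matching the Rodrigues-type expression $x^{-\alpha-\nu}\frac{d^m}{dx^m}[x^{m+\alpha+\nu}L_{n+m}^{(\alpha)}(x)]$ against the explicit formula $L_{n+m}^{(\alpha)}(x)=x^{-\alpha}e^x\frac{d^{n+m}}{dx^{n+m}}[x^{n+m+\alpha}e^{-x}]/(n+m)!$ from \eqref{Laguerre_Rodr} shows the chain of derivatives is consistent. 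One must check at each stage that $n+1\geq m$ guarantees the hypothesis $n\geq m+1$ needed to invoke Lemma \ref{II_red} at the relevant intermediate indices; this bookkeeping is the mildly delicate point, but it works out because the intermediate indices are $(n,0),(n-1,1),\dots,(n-m+m,m)$ wait — more carefully, the operator raises $m$ and lowers $n$, so starting from $(n+m,0)$ and applying $m$ times lands at $(n,m)$, and the intermediate first-index stays $\geq n\geq m-1$, hence $\geq$ the corresponding second index.

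Alternatively — and this is probably the cleaner route — I would verify the orthogonality conditions for $P_{n,m}(x)=x^{-\alpha-\nu}\frac{d^m}{dx^m}[x^{m+\alpha+\nu}L_{n+m}^{(\alpha)}(x)]$ directly by integration by parts, exactly in the spirit of the proof of Lemma \ref{II_red}. For the first family, $\int_0^\infty P_{n,m}(x)e^{-x}x^{k+\alpha}\,dx = \int_0^\infty \bigl(\frac{d^m}{dx^m}[x^{m+\alpha+\nu}L_{n+m}^{(\alpha)}(x)]\bigr)e^{-x}x^{k-\nu}\,dx$, and moving the $m$ derivatives onto $e^{-x}x^{k-\nu}$ (the boundary terms vanish since $\alpha>-1$) gives $\pm\int_0^\infty x^{m+\alpha+\nu}L_{n+m}^{(\alpha)}(x)\,\frac{d^m}{dx^m}[e^{-x}x^{k-\nu}]\,dx$; since $\frac{d^m}{dx^m}[e^{-x}x^{k-\nu}] = e^{-x}x^{k-\nu-m}\cdot(\text{polynomial of degree }m)$, this is an integral of $L_{n+m}^{(\alpha)}(x)$ against $e^{-x}x^{\alpha+k}\cdot(\text{degree-}m\text{ poly in }x)$, hence a combination of $\int_0^\infty L_{n+m}^{(\alpha)}(x)e^{-x}x^{\alpha+j}\,dx$ for $j\leq k+m\leq n-1+m=n+m-1$, all of which vanish by the orthogonality of $L_{n+m}^{(\alpha)}$. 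For the second family one proceeds the same way but must bring the $E_{\nu+1}(x)$ into play: here I would use $E_{\nu+1}(x)=x^\nu\Gamma_{-\nu}(x)$ (as in Lemma \ref{II_red}) so that $E_{\nu+1}(x)x^{k+\alpha}=x^{k+\alpha+\nu}\Gamma_{-\nu}(x)$, cancel the $x^{-\alpha-\nu}$, integrate by parts $m$ times using $\Gamma_{-\nu}'(x)=-x^{-\nu-1}e^{-x}$, and reduce everything to the already-established first-family conditions plus lower-order second-family conditions; the constraint $m\leq n-1$ again ensures all the reduced conditions are within range. Degree $n+m$: the leading term of $x^{m+\alpha+\nu}L_{n+m}^{(\alpha)}(x)$ is $\frac{(-1)^{n+m}}{(n+m)!}x^{2m+n+\alpha+\nu}$, and $\frac{d^m}{dx^m}$ of $x^{2m+n+\alpha+\nu}$ is $(n+2m+\alpha+\nu)(n+2m+\alpha+\nu-1)\cdots(n+m+\alpha+\nu+1)x^{m+n+\alpha+\nu}$, so after multiplying by $x^{-\alpha-\nu}$ we get a polynomial of degree exactly $n+m$ with leading coefficient $\frac{(-1)^{n+m}}{(n+m)!}\frac{(n+m+\alpha+\nu+1)_m}{1}$ — i.e. $\frac{(-1)^{n+m}}{(n+m)!}\cdot\frac{(\alpha+\nu+1)_{n+2m}}{(\alpha+\nu+1)_{n+m}}$.

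Having established that $P_{n,m}$ is a nonzero type II polynomial of the correct degree — and invoking perfectness of the system, so it equals a scalar multiple of $P_{n,m}^{(II)}$ — the normalization constant is forced: $\mathcal{P}_{n,m}$ must be the reciprocal of the leading coefficient just computed, namely $\mathcal{P}_{n,m}=(-1)^{n+m}(n+m)!\,\dfrac{(\alpha+\nu+1)_{n+m}}{(\alpha+\nu+1)_{n+2m}}$, which is the stated formula. The only real obstacle is the second-family orthogonality computation: tracking how repeated integration by parts against $\Gamma_{-\nu}(x)$ spawns both $e^{-x}$ and $E_{\nu+1}(x)$ terms, and checking the index ranges line up so that every resulting integral is one of the known vanishing moments. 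This is precisely the bookkeeping already carried out in Lemma \ref{II_red} for the case $m\to m+1$, so I would either cite that lemma iteratively or replicate its integration-by-parts identity; everything else is routine.
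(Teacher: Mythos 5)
Your first route --- iterating Lemma \ref{II_red} upward from the index $(n+m,0)$, identifying $P_{n+m,0}^{\alpha,\nu+m}$ with $L_{n+m}^{(\alpha)}$, and then reading off $\mathcal{P}_{n,m}$ as the reciprocal of the leading coefficient $\tfrac{(-1)^{n+m}}{(n+m)!}(n+m+\alpha+\nu+1)_m$ --- is exactly the paper's proof, and both your index bookkeeping (the condition $n+1\geq m$ is what is needed at the worst intermediate step) and your normalization computation are correct. The direct integration-by-parts alternative you sketch is essentially Lemma \ref{II_red}'s argument unrolled $m$ times, so it adds nothing beyond the first route.
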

\begin{proof}
We have already shown that
    $$ x^{\alpha+\nu} P_{n,m}^{\alpha,\nu}(x) \propto \frac{d}{dx}\left[x^{\alpha+\nu+1}P_{n+1,m-1}^{\alpha,\nu+1}(x)\right],$$
for $n+1\geq m$, so if we extract $m-1$ additional derivatives, we get
    $$ x^{\alpha} P_{n,m}^{\alpha,\nu}(x) \propto \frac{d^m}{dx^m}\left[x^{\alpha+m}P_{n+m,0}^{\alpha,\nu+m}(x)\right].$$
The remaining polynomial is only orthogonal with respect to the first weight $x^{\alpha}e^{-x}$, hence we must have $P_{n+m,0}^{\alpha,\nu+m}(x)\propto L_{n+m}^{(\alpha)}(x)$. The leading coefficient of this Laguerre polynomial is $(-1)^{n+m}/(n+m)!$, so the leading coefficient of the monic type II polynomial is $\mathcal{P}_{n,m} (-1)^{n+m}(n+m)! (n+m+\alpha+\nu+1)_m$. This readily leads to the stated expression for the constant $\mathcal{P}_{n,m}$.
\end{proof}

Observe that if we would replace the Laguerre polynomial in this formula by its Rodrigues formula \eqref{Laguerre_Rodr}, we would obtain a Rodrigues formula for the type II polynomial as well.

\subsection{Hypergeometric series}

In what follows, we will show that the type II polynomials are hypergeometric polynomials. This property will be inherited from the underlying Laguerre polynomial, which is a hypergeometric polynomial as well. More precisely, we have (see \cite[Eq. (5.3.3)]{Szego})
$$ L_{n}^{(\alpha)}(x) = \frac{(\alpha+1)_n}{n!} \, {}_1F_1 \left( \begin{array}{c} -n \\ \alpha+1 \end{array}; x \right). $$

\begin{thm}
Suppose that $n+1\geq m$. The type II polynomial in \eqref{II_diff} arises as the generalized hypergeometric series
\begin{equation} \label{II_2F2}
    P_{n,m}(x) = \frac{(\alpha+1)_{n+m} (\alpha+\nu+1)_m}{(n+m)!} \, {}_2F_2 \left( \begin{array}{c} -n-m,m+\alpha+\nu+1 \\ \alpha+1, \alpha+\nu+1 \end{array}; x \right).
\end{equation}
\end{thm}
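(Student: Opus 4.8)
The plan is to substitute the hypergeometric representation of the Laguerre polynomial into the differentiation formula \eqref{II_diff} and differentiate term by term. Writing
$$ L_{n+m}^{(\alpha)}(x) = \frac{(\alpha+1)_{n+m}}{(n+m)!} \sum_{k=0}^{n+m} \frac{(-n-m)_k}{(\alpha+1)_k} \frac{x^k}{k!}, $$
one has $x^{m+\alpha+\nu} L_{n+m}^{(\alpha)}(x) = \frac{(\alpha+1)_{n+m}}{(n+m)!} \sum_k \frac{(-n-m)_k}{(\alpha+1)_k k!} x^{k+m+\alpha+\nu}$, and since
$$ \frac{d^m}{dx^m} x^{k+m+\alpha+\nu} = (k+\alpha+\nu+1)_m\, x^{k+\alpha+\nu}, $$
applying $x^{-\alpha-\nu}\frac{d^m}{dx^m}[\,\cdot\,]$ gives
$$ P_{n,m}(x) = \frac{(\alpha+1)_{n+m}}{(n+m)!} \sum_{k=0}^{n+m} \frac{(-n-m)_k (k+\alpha+\nu+1)_m}{(\alpha+1)_k\, k!}\, x^{k}. $$

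The only remaining point is to rewrite the factor $(k+\alpha+\nu+1)_m$ so that the $k$-dependence is packaged into Pochhammer symbols in the summation index. Using the elementary identity
$$ (k+\alpha+\nu+1)_m = \frac{\Gamma(k+\alpha+\nu+1+m)}{\Gamma(k+\alpha+\nu+1)} = (\alpha+\nu+1)_m \cdot \frac{(m+\alpha+\nu+1)_k}{(\alpha+\nu+1)_k}, $$
the sum becomes $(\alpha+\nu+1)_m \sum_k \frac{(-n-m)_k (m+\alpha+\nu+1)_k}{(\alpha+1)_k (\alpha+\nu+1)_k} \frac{x^k}{k!}$, which is exactly $(\alpha+\nu+1)_m$ times the ${}_2F_2$ in \eqref{II_2F2}; note the series terminates because of the factor $(-n-m)_k$, consistent with $P_{n,m}$ having degree $n+m$. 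Collecting constants yields the claimed formula.

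There is essentially no obstacle here: the argument is a direct term-by-term differentiation followed by one standard Pochhammer manipulation, and it does not even require the hypothesis $n+1\geq m$ beyond what is needed for \eqref{II_diff} itself to hold. One could alternatively note that $x^{\alpha}e^{-x}$-orthogonality of $P_{n,m}$ reduces, via the contiguous ${}_1F_1$ identities, to that of $L_{n+m}^{(\alpha)}$, but the computational route above is the most transparent and is the one I would present.
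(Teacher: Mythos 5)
Your proposal is correct and follows essentially the same route as the paper: start from the differentiation formula \eqref{II_diff} and the ${}_1F_1$ representation of $L_{n+m}^{(\alpha)}$, then differentiate; the only difference is that you carry out the term-by-term computation explicitly, whereas the paper invokes the general identity for $\frac{d^n}{dz^n}\left[z^{\gamma}\,{}_pF_q\right]$ from DLMF 16.3.2, of which your calculation is just the special case $p=q=1$, $\gamma=m+\alpha+\nu$. Your Pochhammer manipulation $(k+\alpha+\nu+1)_m=(\alpha+\nu+1)_m\,(m+\alpha+\nu+1)_k/(\alpha+\nu+1)_k$ is correct, so the argument is complete.
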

\begin{proof}
Use the previous lemma and the following differentiation formula for the generalized hypergeometric series
$$ \frac{d^n}{dz^n}\left[z^\gamma {}_pF_q \left( \begin{array}{c} a_1,\dots,a_p \\ b_1,\dots,b_q \end{array}; z \right) \right] = (\gamma-n+1)_n z^{\gamma-n} {}_{p+1}F_{q+1} \left( \begin{array}{c} \gamma+1,a_1,\dots,a_p \\ \gamma+1-n,b_1,\dots,b_q \end{array}; z \right), $$
see \cite[Eq. 16.3.2]{DLMF}.
\end{proof}

Note that this hypergeometric series is similar to the hypergeometric series that is used in the remaining fundamental solution of the differential equation in (the proof of) Proposition $\ref{I_diff_eq}$.
\medbreak

The previous result allows us to determine a differential equation for $P_{n,m}$.

\begin{prop}
Every type II polynomial $y=P_{n,m}(x)$ satisfies the third order differential equation
\begin{equation} \label{II_diff_eq}
    x^2 y''' + x(2\alpha+\nu+3-x) y'' + [(\alpha+1)(\alpha+\nu+1)+x(n-\alpha-\nu-2)]y' + (n+m)(m+\alpha+\nu+1)y = 0.
\end{equation}
\end{prop}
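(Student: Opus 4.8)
The plan is to derive the differential equation directly from the hypergeometric representation \eqref{II_2F2}, exactly as was done for the type I function in Proposition \ref{I_diff_eq}. Since $P_{n,m}(x)$ is a constant multiple of
$$ {}_2F_2 \left( \begin{array}{c} -n-m,m+\alpha+\nu+1 \\ \alpha+1, \alpha+\nu+1 \end{array}; x \right), $$
I would invoke the general fact (see \cite[Eq. 16.8.3]{DLMF}) that ${}_2F_2$ satisfies a third order linear ODE. The only subtlety is that the differential equation \eqref{2F2_diff_eq} recorded earlier in the excerpt was written for the argument $-x$, i.e. for ${}_2F_2(a_1,a_2;b_1,b_2;-x)$, whereas here the argument is $+x$. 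So first I would rewrite the generalized hypergeometric differential equation for argument $x$: the operator is $\bigl[\theta(\theta+b_1-1)(\theta+b_2-1) - x(\theta+a_1)(\theta+a_2)\bigr]y=0$ with $\theta = x\,d/dx$, which upon expansion gives
$$ x^2 y''' + x(b_1+b_2+1-x)y'' + \bigl(b_1 b_2 - x(a_1+a_2+1)\bigr)y' - a_1 a_2 y = 0. $$
(Equivalently, replace $x \mapsto -x$ in \eqref{2F2_diff_eq}, which flips the sign of the $x$ inside each coefficient bracket and the sign of the final term.)

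Next I would substitute the explicit parameters $a_1 = -n-m$, $a_2 = m+\alpha+\nu+1$, $b_1 = \alpha+1$, $b_2 = \alpha+\nu+1$. The coefficient of $xy''$ becomes $b_1+b_2+1-x = 2\alpha+\nu+3-x$; the coefficient of $y'$ becomes $b_1 b_2 - x(a_1+a_2+1) = (\alpha+1)(\alpha+\nu+1) - x(-n-m+m+\alpha+\nu+1+1) = (\alpha+1)(\alpha+\nu+1) + x(n-\alpha-\nu-2)$; and the constant term is $-a_1 a_2 = (n+m)(m+\alpha+\nu+1)$. These match \eqref{II_diff_eq} exactly. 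Finally, since $P_{n,m}$ differs from the ${}_2F_2$ only by the nonzero scalar $(\alpha+1)_{n+m}(\alpha+\nu+1)_m/(n+m)!$, and the equation is linear and homogeneous, $y = P_{n,m}(x)$ satisfies it as well.

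There is essentially no obstacle here: the entire argument is a bookkeeping check that the parameter substitution reproduces the stated coefficients, together with the observation that the sign conventions are handled by the $x \mapsto -x$ substitution relative to \eqref{2F2_diff_eq}. The one place to be careful is precisely that sign bookkeeping — a reader comparing to Proposition \ref{I_diff_eq} might expect the same signs, but the argument of the hypergeometric function in \eqref{II_2F2} is $+x$ rather than $-x$, which is why the $-x$ terms and the constant term appear with the signs shown. I would write the proof in two or three sentences: cite \cite[Eq. 16.8.3]{DLMF} (or \cite[Eq. 16.8.6]{DLMF}) for the differential equation of ${}_2F_2$, note the argument is $x$ here so the equation is obtained from \eqref{2F2_diff_eq} by $x \mapsto -x$, substitute the parameters, and conclude by linearity.
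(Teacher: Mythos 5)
Your proposal is correct and follows exactly the paper's own route: the paper likewise cites the general third-order ODE for ${}_2F_2$ with argument $x$ (obtained from \eqref{2F2_diff_eq} in a negative argument) and reads off the coefficients from the parameters in \eqref{II_2F2}. Your parameter substitutions and the sign bookkeeping for the $x\mapsto -x$ change are all accurate.
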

\begin{proof}
This is the differential equation associated to the ${}_2F_2$ hypergeometric series in \eqref{II_2F2}. In general, the hypergeometric series
    $$ {}_2F_2 \left( \begin{array}{c} a_1,a_2 \\ b_1, b_2 \end{array}; x \right) $$
satisfies the differential equation
    $$ x^2 y''' + x (b_1+b_2+1-x) y'' + (b_1b_2-x(a_1+a_2+1))y' - a_1a_2 y = 0, $$
see \cite[Eq. 16.8.3]{DLMF}, \cite[\S 16.8 (ii)]{NIST} or use \eqref{2F2_diff_eq} in a negative argument.
\end{proof}

\subsection{Recurrence relations}
The nearest neighbor recurrence relations for the monic type II polynomials are similar to those for the normalized type I functions \eqref{NNRR_I_1}--\eqref{NNRR_I_2}; one uses the same coefficients up to a shift 
\begin{eqnarray}
  xP_{n,m}^{(II)}(x) &=& P_{n+1,m}^{(II)}(x)  + c_{n,m} P_{n,m}^{(II)}(x) + a_{n,m} P_{n-1,m}^{(II)}(x) + b_{n,m}P_{n,m-1}^{(II)}(x),    \label{NNRR_II_1} \\
  xP_{n,m}^{(II)}(x) &=& P_{n,m+1}^{(II)}(x) + d_{n,m} P_{n,m}^{(II)}(x) +  a_{n,m} P_{n-1,m}^{(II)}(x) + b_{n,m}P_{n,m-1}^{(II)}(x).    \label{NNRR_II_2}
\end{eqnarray}
Below we will describe an explicit method to derive these recurrence relations from the three-term recurrence relation for the underlying Laguerre polynomials. We again require a connection formula.
\begin{lem} \label{NNRR_II_CF}
One has 
 $$ P_{n,m+1}^{(II)}(x)-P_{n+1,m}^{(II)}(x) = \frac{(n+m+\alpha+1)(n+m+\alpha+\nu+1)}{(n+2m+\alpha+\nu+1)(n+2m+\alpha+\nu+2)} P_{n,m}^{(II)}(x). $$
\end{lem}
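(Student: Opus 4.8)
The plan is to mimic the proof of the analogous connection formula for type I functions (the unnamed Proposition just before Theorem \ref{NNRR_coeff}), but working on the \emph{type II} side. There, the key was that the type I functions had explicit Mellin transforms, so one could write $\hat F_{n-1,m}$ and $\hat F_{n,m-1}$ as simple rational multiples of $\hat F_{n,m}$ and read off a linear identity. Here the natural substitute is the explicit differentiation formula \eqref{II_diff}, $P_{n,m}(x) = x^{-\alpha-\nu}\frac{d^m}{dx^m}[x^{m+\alpha+\nu}L_{n+m}^{(\alpha)}(x)]$, together with the three-term recurrence \eqref{Laguerre_RR} for Laguerre polynomials. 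So my first step is to write each of $P_{n,m+1}$, $P_{n+1,m}$, $P_{n,m}$ via \eqref{II_diff}: they all involve $L_{n+m+1}^{(\alpha)}$ (for the first two) and $L_{n+m}^{(\alpha)}$ (for the last), with a $\frac{d^{m+1}}{dx^{m+1}}$ or $\frac{d^m}{dx^m}$ applied to $x^{m+\alpha+\nu+1}$ or $x^{m+\alpha+\nu}$ times the Laguerre polynomial. The total degree of all three is $n+m+1$.

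The cleanest route, I expect, is actually a degree-and-orthogonality argument rather than a brute-force computation. The left-hand side $P_{n,m+1}^{(II)}(x)-P_{n+1,m}^{(II)}(x)$ is a polynomial of degree at most $n+m+1$; since both polynomials are monic of degree $n+m+1$, the difference has degree at most $n+m$. Next I would check which orthogonality conditions this difference satisfies. Against $e^{-x}x^{k+\alpha}$: $P_{n,m+1}^{(II)}$ is orthogonal for $k=0,\dots,n-1$ and $P_{n+1,m}^{(II)}$ for $k=0,\dots,n$, so the difference is orthogonal for $k=0,\dots,n-1$. Against $E_{\nu+1}(x)x^{k+\alpha}$: $P_{n,m+1}^{(II)}$ is orthogonal for $k=0,\dots,m$ and $P_{n+1,m}^{(II)}$ for $k=0,\dots,m-1$, so the difference is orthogonal for $k=0,\dots,m-1$. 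That is a total of $n+m$ orthogonality conditions on a polynomial of degree at most $n+m$ — exactly the defining conditions of $P_{n,m}^{(II)}$ (degree $\le n+m$, orthogonal to $e^{-x}x^{k+\alpha}$ for $k<n$ and to $E_{\nu+1}(x)x^{k+\alpha}$ for $k<m$). By perfectness/uniqueness of the system, $P_{n,m+1}^{(II)}-P_{n+1,m}^{(II)}$ is a scalar multiple of $P_{n,m}^{(II)}$.

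It then remains only to pin down that scalar, and this is the one genuine computation. Since $P_{n,m}^{(II)}$ is monic of degree exactly $n+m$, the constant equals the coefficient of $x^{n+m}$ in $P_{n,m+1}^{(II)}-P_{n+1,m}^{(II)}$, i.e. the difference of the subleading (degree-$(n+m)$) coefficients of the two monic degree-$(n+m+1)$ polynomials. I would extract these subleading coefficients from the hypergeometric representation \eqref{II_2F2} — which gives $P_{n,m}(x)$ with a known normalization $\mathcal{P}_{n,m}$ — reading off the ratio of the $k=0$ and $k=1$ terms of the ${}_2F_2$ series and multiplying by $\mathcal{P}_{n,m}$ so as to normalize to monic. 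Concretely, the subleading coefficient of $P_{n,m}^{(II)}$ is $-$(number of terms bookkeeping) and after simplification one should land on the quoted value $\frac{(n+m+\alpha+1)(n+m+\alpha+\nu+1)}{(n+2m+\alpha+\nu+1)(n+2m+\alpha+\nu+2)}$. The main obstacle is purely this last bit of Pochhammer algebra: keeping the normalization constants $\mathcal{P}_{n,m+1}$, $\mathcal{P}_{n+1,m}$ straight and simplifying the difference of two rational expressions in $n,m,\alpha,\nu$ to the stated closed form; I would verify it with a computer algebra system, exactly as the authors do for the type I recurrence coefficients. Alternatively, one can sidestep the computation entirely by noting that this connection formula must be consistent with the two nearest-neighbor recurrences \eqref{NNRR_II_1}--\eqref{NNRR_II_2} once their coefficients are known — subtracting them gives precisely a relation of this shape — but since those coefficients are presumably derived \emph{using} this lemma, the direct leading-coefficient computation is the honest path.
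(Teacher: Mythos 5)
Your argument is correct, and it takes a genuinely different route from the paper. The paper proves this lemma by brute force on the hypergeometric representation \eqref{II_2F2}: it writes $P_{n,m+1}^{(II)}[k]$ and $P_{n+1,m}^{(II)}[k]$ as explicit rational multiples of $P_{n,m}^{(II)}[k]$ for \emph{every} monomial $x^k$ and subtracts. You instead get the proportionality for free: the difference of two monic polynomials of degree $n+m+1$ has degree at most $n+m$ and inherits the $n+m$ orthogonality conditions common to both ($k\leq n-1$ against $e^{-x}x^{k+\alpha}$ and $k\leq m-1$ against $E_{\nu+1}(x)x^{k+\alpha}$), so by perfectness it must be a scalar multiple of $P_{n,m}^{(II)}$, and only one coefficient comparison remains. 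Your route is cleaner and isolates the structural content (it is exactly why the connection constant must equal $c_{n,m}-d_{n,m}=a_{n+1,m}-a_{n,m+1}$); the paper's route produces the proportionality and the constant simultaneously without invoking perfectness. Your degree-and-orthogonality counting and the reduction to the subleading coefficient are both sound.

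One warning about the final computation: do not expect to land on the quoted constant. From \eqref{II_2F2} the subleading coefficient of the monic $P_{n,m}^{(II)}$ is $-\frac{(n+m)(n+m+\alpha)(n+m+\alpha+\nu)}{n+2m+\alpha+\nu}$, so the difference you need is
\begin{equation*}
(n+m+1)(n+m+\alpha+1)(n+m+\alpha+\nu+1)\left(\frac{1}{n+2m+\alpha+\nu+1}-\frac{1}{n+2m+\alpha+\nu+2}\right),
\end{equation*}
which equals $\frac{(n+m+1)(n+m+\alpha+1)(n+m+\alpha+\nu+1)}{(n+2m+\alpha+\nu+1)(n+2m+\alpha+\nu+2)}$ --- an extra factor $(n+m+1)$ relative to the displayed statement. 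This is not an algebra slip on your side: the same factor appears in $a_{n+1,m}-a_{n,m+1}$ computed from Theorem \ref{NNRR_coeff}, and a direct check at $(n,m)=(1,0)$, where $P_{2,0}^{(II)}(x)=x^2-2(\alpha+2)x+(\alpha+1)(\alpha+2)$ and $P_{1,1}^{(II)}$ has subleading coefficient $-2(\alpha+2)(\alpha+\nu+2)/(\alpha+\nu+3)$, gives the constant $\frac{2(\alpha+2)}{\alpha+\nu+3}$, twice the printed value. The printed coefficient ratios in the paper's proof have dropped the numerator of $\frac{(-n-m-1)_k}{(-n-m)_k}=\frac{-n-m-1}{-n-m+k-1}$, which is where the missing $(n+m+1)$ went. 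So carry out your plan as written, but report the constant with the factor $(n+m+1)$ rather than massaging the Pochhammer algebra to match the statement.
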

\begin{proof}
Consider the hypergeometric series representations of the monic type II polynomials in \eqref{II_2F2} and compare the coefficients of each monomial $x^k$ on both sides of the equation. If we denote the coefficient of $x^k$ in a polynomial $P(x)$ by $P[k]$, we have
$$P_{n,m+1}^{(II)}[k] = \frac{(\alpha+n+m+1)(\alpha+\nu+n+m+1)(\alpha+\nu+m+k+1)}{(\alpha+\nu+n+2m+1)(\alpha+\nu+n+2m+2)(-n-m+k-1)} P_{n,m}^{(II)}[k],$$
$$P_{n+1,m}^{(II)}[k] = \frac{(\alpha+n+m+1)(\alpha+\nu+n+m+1)}{(\alpha+\nu+n+2m+1)(-n-m+k-1)} P_{n,m}^{(II)}[k].$$
Now use
$$ \frac{\alpha+\nu+m+k+1}{\alpha+\nu+n+2m+2} - 1 = \frac{-n-m+k-1}{\alpha+\nu+n+2m+2}$$
to get the stated result.
\end{proof}

\textit{Alternative proof of \eqref{NNRR_II_1}--\eqref{NNRR_II_2}.}
We may compute one of the derivatives in the differentiation formula \eqref{II_diff} via
    $$ P_{n,m}(x) = x^{-\alpha-\nu} \frac{d^m}{dx^m} \left[ x^{m+\alpha+\nu-1} \cdot xL_{n+m}^{(\alpha)}(x) \right], $$
so that after an application of the Leibniz rule, we get
    $$ P_{n,m}(x) = m! \sum_{k=0}^{m} \binom{m+\alpha+\nu-1}{m-k} \frac{x^{k}}{k!} \frac{d^k}{dx^k}\left[ xL_{n+m}^{(\alpha)}(x) \right].  $$
The three-term recurrence relation \eqref{Laguerre_RR} for Laguerre polynomials and the identity
    $$ \binom{m+\alpha+\nu-1}{m-k} = \binom{m+\alpha+\nu}{m-k} - \binom{m-1+\alpha+\nu}{m-1-k}  $$
then lead to the recurrence relation
$$ \begin{aligned}
    P_{n,m}(x) =\ & (n+m+1) [m P_{n+2,m-1}(x) - P_{n+1,m}(x)] \\
    &- (2n+2m+\alpha+1) [ m P_{n+1,m-1}(x) - P_{n,m}(x)] \\
    &+ (n+m+\alpha) [ m P_{n,m-1}(x) P_{n-1,m}(x)].
\end{aligned} $$
For the monic type II polynomials, this becomes
$$ \begin{aligned}
    P_{n,m}^{(II)}(x) =\ & \left[\frac{m\mathcal{P}_{n,m}}{\mathcal{P}_{n+2,m-1}} P_{n+2,m-1}^{(II)}(x) - \frac{\mathcal{P}_{n,m}}{\mathcal{P}_{n+1,m}} P_{n+1,m}^{(II)}(x)\right] \\
    &- (2n+2m+\alpha+1) \left[\frac{m\mathcal{P}_{n,m}}{\mathcal{P}_{n+1,m-1}} P_{n+1,m-1}^{(II)}(x) - P_{n,m}^{(II)}(x)\right] \\
    &+ (n+m)(n+m+\alpha) \left[\frac{m\mathcal{P}_{n,m}}{\mathcal{P}_{n,m-1}} P_{n,m-1}^{(II)}(x) - \frac{\mathcal{P}_{n,m}}{\mathcal{P}_{n-1,m}} P_{n-1,m}^{(II)}(x)\right].
\end{aligned} $$
We may use the connection formula (Lemma \ref{NNRR_II_CF}) to convert this to the nearest neighbor recurrence relations.
\qed

\subsection{Asymptotic behavior of the zeros}
In what follows, we will investigate the asymptotic behavior of the zeros of the type II polynomial $P_{\vec{n}}=P_{n_1,n_2}$, with multi-index $\vec{n}=(n_1,n_2)$, as $n=n_1+n_2\to\infty$. 
We will now use a multi-index $(n_1,n_2)$ instead of $(n,m)$ to avoid confusion with the degree
$n = n_1+n_2$ which tends to infinity.
The limit will be taken along multi-indices $\vec{n}$ with $(n_1/n,n_2/n)\to(q_1,q_2)$ as $n\to\infty$. We necessarily have $0\leq q_1,q_2\leq 1$ and $q_1+q_2=1$. The restriction that $n_1+1\geq n_2$ leads to the condition $q_1\geq q_2$.

Afterwards, we will compare the results to what is known for the monic type II multiple Laguerre polynomials of the first kind $L_{\vec{n}}^{(II\mid\alpha,\beta)}(x)$. These polynomials satisfy the following orthogonality conditions
\begin{eqnarray*}
   \int_0^\infty L_{\vec{n}}^{(II\mid\alpha,\beta)}(x) e^{-x} x^{k+\alpha} dx = 0, \quad k=0,\dots,n_1-1, \\
   \int_0^\infty L_{\vec{n}}^{(II\mid\alpha,\beta)}(x) e^{-x} x^{k+\beta} dx = 0, \quad k=0,\dots,n_2-1.
\end{eqnarray*}

\subsubsection{Smallest zeros}

We will focus first on the behavior of the smallest zeros of $P_{\vec{n}}$, which we may derive from its Mehler-Heine asymptotics. The Mehler-Heine asymptotics of several families of type II multiple orthogonal polynomials, including the multiple Laguerre polynomials of the first kind, were determined in \cite{WVA_MHA}. We will employ the same strategy used there.

\begin{prop}
For the type II polynomial \eqref{II_2F2}, one has
    $$ \lim_{n\to\infty} \frac{n!}{(\alpha+1)_{n}(\alpha+\nu+1)_{n_2}} P_{\vec{n}}(x/n^2) = {}_0F_2 \left( \begin{array}{c} - \\ \alpha+1, \alpha+\nu+1 \end{array}; -q_2 x \right).$$
As a consequence, if we denote the $k$-th zero of $P_{\vec{n}}(x)$ and of this hypergeometric series by $p_{k,\vec{n}}$ and $x_{k}$ respectively, we have $\lim_{n\to\infty} n^2 p_{k,\vec{n}} = x_{k}$.

\end{prop}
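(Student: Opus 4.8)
The plan is to derive the limit from the explicit hypergeometric representation \eqref{II_2F2} by a term-by-term analysis of the series, exactly in the spirit of the Mehler--Heine results in \cite{WVA_MHA}. Write
$$ \frac{n!}{(\alpha+1)_n(\alpha+\nu+1)_{n_2}} P_{\vec n}(x/n^2) = \sum_{k=0}^{n_1+n_2} \frac{(-n_1-n_2)_k \,(n_2+\alpha+\nu+1)_k}{(\alpha+1)_k(\alpha+\nu+1)_k}\,\frac{1}{k!}\,\frac{(\alpha+1)_{n_1+n_2}}{(\alpha+1)_n}\,\frac{(\alpha+\nu+1)_{n_2}}{(\alpha+\nu+1)_{n_2}}\Bigl(\frac{x}{n^2}\Bigr)^k, $$
where I have inserted $n=n_1+n_2$ so that the prefactor $(\alpha+1)_{n_1+n_2}/(\alpha+1)_n$ is $1$; the cleaner way to say this is that the normalization is chosen precisely so that the $k=0$ term is $1$. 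The coefficient of $x^k$ on the left is then
$$ \frac{(-1)^k(-n_1-n_2)_k\,(n_2+\alpha+\nu+1)_k}{(\alpha+1)_k(\alpha+\nu+1)_k\,k!\,n^{2k}}. $$
As $n\to\infty$ with $n_1/n\to q_1$, $n_2/n\to q_2$, we have $(-n_1-n_2)_k = (-1)^k (n_1+n_2)(n_1+n_2-1)\cdots \sim (-1)^k n^k$ and $(n_2+\alpha+\nu+1)_k \sim n_2^k \sim (q_2 n)^k$, so the coefficient tends to $\dfrac{q_2^k}{(\alpha+1)_k(\alpha+\nu+1)_k\,k!}$, which is exactly the coefficient of $x^k$ in ${}_0F_2(-;\alpha+1,\alpha+\nu+1;-q_2 x)$ after accounting for the sign $(-q_2 x)^k/\bigl((\alpha+1)_k(\alpha+\nu+1)_k k!\bigr) \cdot (-1)^k$... let me recompute: the target series has coefficient $(-q_2)^k/\bigl((\alpha+1)_k(\alpha+\nu+1)_k k!\bigr)$, and indeed $(-1)^k \cdot (-1)^k q_2^k = q_2^k$, wait — the extra $(-1)^k$ from $(-n_1-n_2)_k$ combined with the explicit $(-1)^k$ above and the $(x/n^2)^k$ gives $q_2^k x^k$ in the limit, i.e. the coefficient of $x^k$ is $q_2^k/\bigl((\alpha+1)_k(\alpha+\nu+1)_k k!\bigr)$; but ${}_0F_2(-;\alpha+1,\alpha+\nu+1;-q_2x)$ has coefficient of $x^k$ equal to $(-q_2)^k/\bigl((\alpha+1)_k(\alpha+\nu+1)_k k!\bigr)$. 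So in fact I would not insert the explicit $(-1)^k$; keeping $P_{\vec n}(x/n^2)$ as written, $(-n_1-n_2)_k (x/n^2)^k$ already produces $(-q_2)^k$-type behaviour through $(-n_1-n_2)_k/n^k \to (-1)^k$ and $(n_2+\alpha+\nu+1)_k/n^k\to q_2^k$, giving coefficient $(-1)^k q_2^k/\bigl((\alpha+1)_k(\alpha+\nu+1)_k k!\bigr)$, which matches the target exactly.

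To make this rigorous I would establish locally uniform convergence on compact subsets of $\C$ (or of $\R$), so that the limit is an entire function and, in particular, the zeros converge. The standard device is a dominated-convergence / Tannery-type argument: bound each term of the finite sum, uniformly in $n$, by the corresponding term of a convergent series independent of $n$. Concretely, $\bigl|(-n_1-n_2)_k\bigr| \le n^k \cdot C$ is false as stated, but $\bigl|(-n_1-n_2)_k\bigr| = (n_1+n_2)(n_1+n_2-1)\cdots(n_1+n_2-k+1) \le n^k$ for $k\le n$, and similarly $(n_2+\alpha+\nu+1)_k \le (n+|\alpha+\nu+1|)^k \le (2n)^k$ for $n$ large; hence the $k$-th term is bounded in absolute value by $\dfrac{2^k |x|^k}{|(\alpha+1)_k|\,|(\alpha+\nu+1)_k|\,k!}$ (here one uses $\alpha,\alpha+\nu>-1$, so $(\alpha+1)_k,(\alpha+\nu+1)_k>0$), and the series $\sum_k 2^k|x|^k/\bigl((\alpha+1)_k(\alpha+\nu+1)_k k!\bigr)$ converges for every $x$. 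This legitimizes passing the limit inside the sum and yields the claimed limit, uniformly on compacta.

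Finally, the statement about zeros: the limit function ${}_0F_2(-;\alpha+1,\alpha+\nu+1;-q_2 x)$ is entire of order $1/3$, not identically zero, hence has infinitely many zeros $x_1,x_2,\dots$ (all real and positive when $q_2>0$, since $P_{\vec n}$ has only positive zeros and a locally uniform limit of polynomials with only positive zeros is a real entire function with only nonnegative zeros; if $q_2=0$ the limit is the constant $1$ and there is nothing to prove). By Hurwitz's theorem, applied on a disk around each fixed zero $x_k$ of the limit, the rescaled polynomials $P_{\vec n}(x/n^2)$ (suitably normalized) have, for $n$ large, exactly one zero in that disk, converging to $x_k$; equivalently $n^2 p_{k,\vec n}\to x_k$. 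The main obstacle is purely bookkeeping: making sure the normalizing constant matches so that the $k=0$ coefficient is exactly $1$ (so that no spurious scalar survives in the limit) and tracking the asymptotics $(\alpha+1)_{n_1+n_2}/(\alpha+1)_n = 1$ and $(n_2+\alpha+\nu+1)_k/(\alpha+\nu+1)_{n_2}\cdot(\alpha+\nu+1)_{n_2}$ cancellations correctly; the convergence and Hurwitz steps are routine.
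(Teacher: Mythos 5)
Your proposal is correct and follows essentially the same route as the paper: read off the coefficients from the explicit ${}_2F_2$ representation \eqref{II_2F2}, use $(-n)_k/n^k\to(-1)^k$ and $(n_2+\alpha+\nu+1)_k/n^k\to q_2^k$, and invoke Hurwitz for the zeros. The mid-proof sign bookkeeping is muddled before it resolves correctly, and your Tannery/domination step merely makes explicit a justification the paper leaves implicit, so there is nothing substantive to add.
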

\begin{proof}
The first result follows immediately from the explicit formula \eqref{II_2F2} and
$$ \lim_{n\to\infty} \frac{(-n)_k}{n^k} = (-1)^k, \quad \lim_{n\to\infty} \frac{(n_2+\alpha+1)_k}{n^k} = q_2^k.$$
In order to obtain the second one, we can then apply Hurwitz' theorem.
\end{proof}
    
The associated result for the type II multiple Laguerre polynomials of the first kind is
$$ \lim_{n\to\infty} c_{\vec{n}}^{(\alpha,\beta)} L_{\vec{n}}^{(II\mid\alpha,\beta)}(x/n^2) = {}_0F_2 \left( \begin{array}{c} - \\ \alpha+1, \beta+1 \end{array}; -q_1q_2 x \right),$$
see \cite[Theorem 3]{WVA_MHA}. Thus, if we denote the $k$-th zero of $L_{\vec{n}}^{(II\mid\alpha,\beta)}$ by $l_{\vec{n},k}^{(\alpha,\beta)}$ and apply Hurwitz' theorem again, we can conclude that
    $$ \lim_{n\to\infty}\frac{p_{\vec{n},k}}{l_{\vec{n},k}^{(\alpha,\alpha+\nu)}} = q_1.$$
This shows that the smallest zeros of $P_{\vec{n}}$ will be closer to $0$ than the smallest zeros of $L_{\vec{n}}^{(II\mid\alpha,\alpha+\nu)}$ and that this effect strengthens for smaller values of $q_1$, i.e., if we consider more orthogonality conditions with respect to the exponential integral weight.

\subsubsection{Asymptotic zero distribution and largest zero}

We will now determine the asymptotic zero distribution of the polynomials $P_{\vec{n}}$ as $n\to\infty$. 
In \cite{Couss2VA}, one was able to determine the asymptotic zero distribution for several families of type II multiple orthogonal polynomials, including the multiple Laguerre polynomials of the first kind, for multi-indices on the step-line by investigating an underlying four-term recurrence relation. Essential to their approach was that the coefficients in this recurrence relations satisfied a particular asymptotic property. After using Lemma \ref{NNRR_II_CF} to write the nearest neighbor recurrence relations \eqref{NNRR_II_1}--\eqref{NNRR_II_2} for $P_{\vec{n}}$ as a four-term recurrence relation on the step-line, it can be verified that the coefficients do not satisfy the desired property. The asymptotic zero distribution of the polynomials $P_{\vec{n}}$ therefore doesn't fit in the class of distributions found in this article and we will require a more hands on approach to determine it (which was necessary anyway if we don't want to restrict to multi-indices on the step-line).
\medbreak

Since the pair $(w_1,w_2)$ forms a Nikishin system, and thus an AT-system, of weights supported on $[0,\infty)$ and $(-\infty,0]$, $P_{\vec{n}}$ will have $n$ real simple zeros in $[0,\infty)$ (see, e.g., \cite[Chapter 4, \S 4]{NikiSor}). This interval is unbounded, so we can expect that the (largest) zeros of $P_{\vec{n}}$ will flow away to $\infty$. In order to proceed, we need to find an appropriate scaling $\tilde{P}_{\vec{n}}(x)=P_{\vec{n}}(n^{\gamma}x)$ such that the zeros are in a compact interval $[0,c]$ with $c=c_{q_1,q_2}>0$. A suitable value of $\gamma$ is indicated by the coefficients of the nearest neighbor recurrence relations \eqref{NNRR_II_1}--\eqref{NNRR_II_2}; it is the (smallest) value of $\gamma$ for which the recurrence relations for the monic scaled polynomial has bounded coefficients. As $n\to\infty$, we have 
$$ a_{\vec{n}} = \BO(n^2), \quad b_{\vec{n}} = \BO(n^2), \quad c_{\vec{n}} = \BO(n), \quad d_{\vec{n}} = \BO(n), $$
which suggests taking $\gamma=1$.
\medbreak

The right scaling can be derived more rigorously via the following lemma. It also shows that we can assume that $c\leq 4$. Additionally, the proof gives an explicit method to prove the fact that $P_{\vec{n}}$ has $n$ simple real zeros in $[0,\infty)$.

\begin{lem}
For the largest zero $p_{n,\vec{n}}$ of $P_{\vec{n}}$, we have $\limsup_{n\to\infty} x_{n,\vec{n}}/n \leq 4$.
\end{lem}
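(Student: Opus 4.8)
The plan is to bound the largest zero of $P_{\vec n}$ by controlling the largest zero of the underlying Laguerre polynomial $L_{n+m}^{(\alpha)}$ through the differentiation formula \eqref{II_diff}, since $P_{\vec n}$ and the derivatives of $x^{m+\alpha+\nu}L_{n+m}^{(\alpha)}(x)$ share the same zero-counting measure asymptotically. Recall from the classical theory of Laguerre polynomials (see \cite[Chapter 6]{Szego}) that the largest zero of $L_{N}^{(\alpha)}$ is asymptotically $4N + \BO(N^{1/3})$, so the largest zero of $L_{n+m}^{(\alpha)}$ is at most $(4+o(1))(n+m)\le (4+o(1))\cdot\tfrac{3}{2}n$ under $n+1\ge m$; that bound alone is too weak, so the key is that differentiating $m$ times and multiplying by the monomial $x^{m+\alpha+\nu}$ does \emph{not} push the largest zero past the largest zero of $L_{n+m}^{(\alpha)}$ itself.

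The first step is to record the interlacing/shrinking principle: if a polynomial $g$ of degree $d$ has all its zeros in $[0,R]$, then $\frac{d}{dx}\!\left[x^{\beta}g(x)\right]$ for $\beta>0$, written as $x^{\beta-1}\big(\beta g(x)+xg'(x)\big)$, has all of its zeros (other than possibly at $0$) again in $[0,R]$. Indeed, by Rolle's theorem the zeros of $g'$ lie in the convex hull of the zeros of $g$, and the logarithmic-derivative identity $\frac{(x^\beta g)'}{x^\beta g}=\frac{\beta}{x}+\sum_{j}\frac{1}{x-x_j}$ shows that on $(R,\infty)$ this expression is strictly positive, so no zero of $\beta g+xg'$ can exceed $R$. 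Iterating this $m$ times starting from $g=L_{n+m}^{(\alpha)}$ (whose zeros lie in $[0,R_{n+m}]$ with $R_{n+m}$ the largest Laguerre zero), and using \eqref{II_diff}, we conclude that the largest zero $x_{n,\vec n}$ of $P_{\vec n}$ satisfies $x_{n,\vec n}\le R_{n+m}$.

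It then remains to show $\limsup_{n\to\infty} R_{n+m}/n \le 4$, i.e. that the crude bound $R_{n+m}\le (4+o(1))(n+m)$ can be sharpened to $(4+o(1))n$; this is where the \emph{extra orthogonality with respect to the exponential-integral weight} must be used, since the naive Laguerre bound only gives $6n$. The cleanest route is a direct extremal/variational argument on $P_{\vec n}$ itself rather than on $L_{n+m}^{(\alpha)}$: for any zero $\xi$ of $P_{\vec n}$, write $P_{\vec n}(x)=(x-\xi)Q(x)$ and test the orthogonality conditions $\int_0^\infty P_{\vec n}(x)Q(x)w_j(x)\,x^\alpha\,dx$ against the weights to produce an inequality of the form $\xi\le \big(\text{ratio of weighted }L^2\text{ norms}\big)$, which after inserting the explicit moment asymptotics and the scaling $x\mapsto nx$ collapses to $\limsup \xi/n\le 4$. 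I expect the \textbf{main obstacle} to be precisely this last step: getting the constant $4$ (and not merely some finite constant) requires either the sharp large-$N$ edge asymptotics of the Laguerre zeros combined with a genuine gain from the $m$-fold differentiation — quantifying that the repeated operator $g\mapsto x^{\beta-1}(\beta g+xg')$ actually \emph{contracts} the largest zero when the degree is fixed — or a careful Christoffel-type estimate using that $P_{\vec n}$ is orthogonal to polynomials of degree up to $n-1$ with respect to $x^\alpha e^{-x}$, which already forces the bulk of the zeros into $[0,(4+o(1))n]$. Either way, the subsequent normalization to a compact interval and the choice $\gamma=1$ will follow immediately once the $\limsup\le 4$ bound is in hand.
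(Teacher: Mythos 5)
Your first two paragraphs are essentially the paper's proof: identify $P_{\vec n}$ via the differentiation formula \eqref{II_diff}, observe that the operator $g\mapsto x^{-\gamma}\frac{d}{dx}\bigl[x^{\gamma}g\bigr]$ (applied repeatedly) cannot create zeros beyond the largest zero of the underlying Laguerre polynomial, and invoke Szeg\H{o}'s $4N$ asymptotics for the largest Laguerre zero. The paper justifies the ``no new large zeros'' step by Rolle's theorem and interlacing, which has the advantage of also showing that all zeros stay real and simple in $[0,\infty)$ at each step (something you need in order to iterate your logarithmic-derivative identity, since $\sum_j (x-x_j)^{-1}>0$ on $(R,\infty)$ presupposes real zeros $x_j\le R$ of the \emph{current} polynomial); your argument is fine for one application but should be supplemented by the zero count from Rolle to propagate reality of the zeros through the $m$-fold iteration.

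The genuine problem is your third paragraph. The perceived deficit ``$4(n+m)\le 6n$ versus the desired $4n$'' rests on a misreading of the notation: in this subsection the paper switches to $\vec n=(n_1,n_2)$ precisely so that $n$ denotes the \emph{total degree} $n=n_1+n_2=\deg P_{\vec n}$, and the Laguerre polynomial appearing in \eqref{II_diff} is $L_{n_1+n_2}^{(\alpha)}=L_{n}^{(\alpha)}$, of degree exactly $n$. Its largest zero is $(4+o(1))\,n$, so the bound $\limsup x_{n,\vec n}/n\le 4$ follows immediately from your first two paragraphs with no loss of a factor $3/2$ and no need for any ``genuine gain'' from the differentiation. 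Consequently the entire variational/Christoffel-type argument you sketch is unnecessary --- which is fortunate, because as written it is not a proof: you describe two possible strategies, concede that carrying either one out is ``the main obstacle,'' and never produce the claimed inequality $\xi\le(\text{ratio of weighted }L^2\text{ norms})$ or explain why it would yield the sharp constant $4$. If the degree mismatch were real, your proposal would be incomplete at exactly the decisive step; since it is not real, you should simply delete the third paragraph and conclude directly.
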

\begin{proof}
We will show this by comparing the largest zero of $P_{\vec{n}}$ with the largest zero of the underlying Laguerre polynomial $L_n^{(\alpha)}$ in the differentiation formula \eqref{II_diff}. The idea will be that the operator 
$$P(x)\mapsto x^{-\alpha-\nu}\frac{d^m}{dx^m}\left[x^{m+\alpha+\nu} P(x) \right]$$
will pull the zeros of $L_n^{(\alpha)}(x)$ towards $0$. It is known from the general theory of orthogonal polynomials that $L_n^{(\alpha)}(x)$ has $n$ real simple zeros in $[0,\infty)$. If we apply the operator 
$$P(x)\mapsto x^{-\alpha-\nu}\frac{d}{dx}\left[x^{\alpha+\nu} P(x) \right]$$
once to $x^m L_n^{(\alpha)}(x)$, we get a polynomial with a zero of order $m-1$ at $0$ and of which the other $n$ zeros are real, simple, in $[0,\infty)$ and interlace with the zeros of the input polynomial $L_n^{(\alpha)}(x)$ and $0$ (due to Rolle's theorem). In particular, all of these zeros are smaller than the largest zero of $L_n^{(\alpha)}(x)$. Applying the operator $m-1$ additional times and repeating this argument, we recover $P_{\vec{n}}$ together with the conclusion that it has $n$ simple real zeros in $[0,\infty)$ all smaller than the largest zero of $L_n^{(\alpha)}(x)$. It is known that the largest zero of the Laguerre polynomial $L_n^{(\alpha)}(x)$ behaves as $4n$ as $n\to\infty$ (see, e.g., \cite[Thm.~6.31.2]{Szego}). 
Hence we get the estimate in the lemma.
\end{proof}

In what follows, we will use some of the techniques from \cite{LeursVA} to derive the asymptotic zero distribution of $\tilde{P}_{\vec{n}}(x)=P_{\vec{n}}(nx)$. Denote the zeros of $\tilde{P}_{\vec{n}}(x)$ by $x_{1,\vec{n}}< \dots< x_{n,\vec{n}}$. Their distribution is given by the probability measure
    $$ \mu_{\vec{n}} = \frac{1}{n} \sum_{j=1}^n \delta_{x_{j,\vec{n}}} $$
with Stieltjes transform
    $$ S_{\vec{n}}(z) = \int_0^c \frac{d\mu_{\vec{n}}(x)}{z-x} = \frac{1}{n} \frac{\tilde{P}'_{\vec{n}}(z)}{\tilde{P}_{\vec{n}}(z)}.$$
According to Helly's selection principle, the sequence of probability measures $(\mu_{\vec{n}})_{n\in\N}$ on $[0,c]$ has a subsequence $(\mu_{\vec{n}_k})_{k\in\N}$ that converges weakly to a probability measure $\mu=\mu_{q_1,q_2}$ on $[0,c]$. If this limit is independent of the subsequence, it is called the asymptotic zero distribution of $\tilde{P}_{\vec{n}}(x)$. 

Let $(\mu_{\vec{n}_k})_{k\in\N}$ be any subsequence of $(\mu_{\vec{n}})_{n\in\N}$ that converges weakly to $\mu$. Then it follows from the Grommer-Hamburger theorem (see, e.g., \cite{GerHill}) that $(S_{\vec{n}_k}(z))_{k\in\N}$ converges uniformly to the Stieltjes transform 
    $$ S(z)=\int_0^c \frac{d\mu(x)}{z-x}$$
on compact subsets of $\C\backslash[0,c]$. Moreover, $zS(z)\to 1$ as $z\to\infty$. 

We can proceed via the (third order) differential equation for $P_{\vec{n}}(x)$; we will use it to show that $S(z)$ satisfies a third order algebraic equation. Since this equation will be independent of the selected subsequence, we will be able to conclude that $S(z)$ is the Stieltjes transform of the asymptotic zero distribution of $\tilde{P}_{\vec{n}}(x)$. Moreover, by solving the equation and by looking for the solution(s) with $zS(z)\to 1$ as $z\to\infty$, we will be able to obtain an explicit expression for it.

\begin{lem} \label{II_alg_eq}
The Stieltjes transform $S(z)$ of $\mu$ satisfies the cubic equation
    $$ z^2 S^3 - z^2 S^2 + q_1 zS + q_2 =0,\quad z\in\C\backslash[0,c].$$
\end{lem}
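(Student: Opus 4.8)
The plan is to start from the third-order differential equation \eqref{II_diff_eq} for $y=P_{\vec{n}}(x)$, rescale it to the variable $x\mapsto nx$ so that it becomes a differential equation for $\tilde{P}_{\vec{n}}(x)=P_{\vec{n}}(nx)$, and then divide through by $n^3\tilde{P}_{\vec{n}}(x)$ so that the dominant terms are organized as powers of the logarithmic derivative $\tilde{P}'_{\vec{n}}/\tilde{P}_{\vec{n}} = nS_{\vec{n}}$. Concretely, after the substitution $x\to nx$ the equation reads (schematically) $n^2 x^2 \tilde y''' + n x(2\alpha+\nu+3-nx)\tilde y'' + [(\alpha+1)(\alpha+\nu+1) + nx(n_1-\alpha-\nu-2)]\tilde y' + (n_1+n_2)(n_2+\alpha+\nu+1)\tilde y = 0$, where I have used $n=n_1+n_2$ and written $\tilde y = \tilde{P}_{\vec{n}}$. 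Dividing by $n^3 \tilde y$ and collecting powers of $n$ gives, to leading order, $x^2 (\tilde y'''/\tilde y) - x^2 (\tilde y''/\tilde y) + (n_1/n)\, x (\tilde y'/\tilde y) + (n_1/n)(n_2/n) + o(1)$, once everything is expressed in terms of $S_{\vec{n}}$.

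Next I would use the standard fact (the ``ratio asymptotics'' of logarithmic derivatives of polynomials whose zeros have a limiting distribution, as exploited in \cite{LeursVA}) that
$$ \frac{1}{n}\frac{\tilde{P}'_{\vec{n}_k}(z)}{\tilde{P}_{\vec{n}_k}(z)} \to S(z), \quad \frac{1}{n^2}\frac{\tilde{P}''_{\vec{n}_k}(z)}{\tilde{P}_{\vec{n}_k}(z)} \to S(z)^2, \quad \frac{1}{n^3}\frac{\tilde{P}'''_{\vec{n}_k}(z)}{\tilde{P}_{\vec{n}_k}(z)} \to S(z)^3, $$
uniformly on compact subsets of $\C\backslash[0,c]$, along the convergent subsequence $(\mu_{\vec{n}_k})$. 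The second and third limits follow from the first by differentiation (the logarithmic derivative of $\tilde P$ is $\sum_j 1/(z-x_{j,\vec n})$, and its $k$-th power dominates the $k$-th derivative divided by $\tilde P$, the lower-order corrections being $O(1/n)$ smaller); alternatively one writes $\tilde P''/\tilde P = (\tilde P'/\tilde P)' + (\tilde P'/\tilde P)^2$ and notes $(\tilde P'/\tilde P)'/n^2 = (S_{\vec n})'/n \to 0$, and similarly for the third derivative. Plugging these limits into the rescaled, normalized differential equation and using $n_1/n\to q_1$, $n_2/n\to q_2$ yields $z^2 S^3 - z^2 S^2 + q_1 z S + q_1 q_2 = 0$; since $q_1+q_2=1$ we may replace $q_1 q_2$ by $q_2 - q_2^2$ or, more to the point, note that $q_1 q_2$ is what appears but the paper writes $q_2$ — so I should double-check the bookkeeping of the constant term, as $(n_1+n_2)(n_2+\alpha+\nu+1)/n^3 \cdot (\text{?})$ must be reconciled; in fact after dividing by $n^3$ the constant term $(n_1+n_2)(n_2+\alpha+\nu+1)\tilde y$ contributes $(n/n)(n_2/n)\cdot(1/1) \to q_2$ because one factor of $n$ cancels with the $n$ from $n_2+\alpha+\nu+1 \sim n_2$ and the other... hence the stated constant $q_2$. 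I would present this reconciliation carefully rather than hand-wave it.

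Finally, since the cubic equation $z^2 S^3 - z^2 S^2 + q_1 z S + q_2 = 0$ has coefficients independent of the chosen subsequence, every weakly convergent subsequence of $(\mu_{\vec n})$ produces a Stieltjes transform solving the same algebraic equation; together with the normalization $zS(z)\to 1$ as $z\to\infty$, which picks out a unique branch (the one behaving like $1/z$ at infinity), this forces all subsequential limits to coincide, so $\mu$ is in fact the asymptotic zero distribution, and the lemma is proved. The main obstacle I anticipate is \emph{justifying the passage to the limit in the derivative terms rigorously} — i.e., showing $n^{-2}\tilde{P}''_{\vec n_k}/\tilde{P}_{\vec n_k}\to S^2$ and $n^{-3}\tilde{P}'''_{\vec n_k}/\tilde{P}_{\vec n_k}\to S^3$ uniformly on compacta of $\C\setminus[0,c]$ — since one must control that $S_{\vec n_k}$ and its derivatives stay bounded (this uses that all zeros lie in the fixed compact $[0,c]$, guaranteed by the preceding lemma with $c\le 4$) and that differentiation commutes with the uniform limit (Weierstrass/Vitali). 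The algebraic manipulation of the rescaled ODE and the verification that the correct branch satisfies $zS(z)\to1$ are comparatively routine, though the latter is needed to complete the uniqueness argument.
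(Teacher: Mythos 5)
Your overall strategy is exactly the paper's: rescale the third-order differential equation \eqref{II_diff_eq} to the variable $nz$, write $\tilde P_{\vec n}^{(k)}/\tilde P_{\vec n}$ as $n^k(S_{\vec n}^k+\mathcal{O}(n^{-1}))$ (the corrections involving derivatives of $S_{\vec n}$ being controlled, as you say, by uniform convergence on compacta of $\C\setminus[0,c]$ and Weierstrass), and pass to the limit along the convergent subsequence. That part of the proposal is sound and matches the paper's identity $\tilde P^{(k)}_{\vec n}=n^k\tilde P_{\vec n}(S_{\vec n}^k+\mathcal{O}(n^{-1}))$. (The closing remarks about subsequence-independence and the branch with $zS(z)\to 1$ are not part of this lemma, which is stated for an arbitrary subsequential limit $\mu$; they belong to Theorem \ref{II_AZD} and the surrounding discussion, but they do no harm.)

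The genuine problem is the power-of-$n$ bookkeeping, which you flag but do not resolve correctly. Your ``schematic'' rescaled equation is internally inconsistent: with $\tilde y(z)=y(nz)$ one has $y^{(k)}(nz)=n^{-k}\tilde y^{(k)}(z)$, so the four terms of \eqref{II_diff_eq} evaluated at $nz$ carry prefactors $n^{-1}z^2$, $n^{-1}z(2\alpha+\nu+3-nz)$, $n^{-1}[(\alpha+1)(\alpha+\nu+1)+nz(n_1-\alpha-\nu-2)]$ and $(n_1+n_2)(n_2+\alpha+\nu+1)$ --- not the prefactors $n^2$, $n$, $1$, $1$ that you wrote. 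Substituting $\tilde y^{(k)}=n^k\tilde y\,(S_{\vec n}^k+\mathcal{O}(n^{-1}))$, every surviving term is of order $n^2\tilde y$: the third-derivative term gives $n^2z^2S^3$, the second gives $-n^2z^2S^2$, the first-derivative term gives $n\,n_1\,zS=n^2q_1zS(1+o(1))$, and the zeroth-order term gives $n\,n_2(1+o(1))=n^2q_2(1+o(1))$ because $(n_1+n_2)(n_2+\alpha+\nu+1)\sim n\cdot n_2$. Dividing by $n^2\tilde y$ (not $n^3\tilde y$) and letting $n\to\infty$ yields $z^2S^3-z^2S^2+q_1zS+q_2=0$ directly; the product $q_1q_2$ never appears and no ``reconciliation'' is needed. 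Your attempted fix (``one factor of $n$ cancels with\dots'') does not give a coherent count, so as written the derivation of the constant term --- the one place where the second weight actually enters the limiting equation --- fails. Once the prefactors are tracked correctly the argument closes and coincides with the paper's proof.
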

\begin{proof}
We will start by transforming the differential equation \eqref{II_diff_eq} for $P_{\vec{n}}(z)$ into a differential equation for $\tilde{P}_{\vec{n}}(z)$. Only the asymptotic structure as $n\to\infty$ will be important. By making use of the relation $\tilde{P}^{(k)}_{\vec{n}}(z) = n^k P^{(k)}_{\vec{n}}(nz)$, we find that
    $$  z^2 \tilde{P}'''_{\vec{n}}(z)/n + z(-z+\BO(1)) \tilde{P}''_{\vec{n}}(z) + [z(n_1+\BO(1)) + \BO(1)] \tilde{P}'_{\vec{n}}(z) + n(n_2+\BO(1)) P_{\vec{n}}(z) = 0, $$
as $n\to\infty$. From the identity
    $$ \tilde{P}'_{\vec{n}}(z) = n \tilde{P}_{\vec{n}}(z) S_{\vec{n}}(z) $$
then follows that 
    $$ \tilde{P}''_{\vec{n}}(z) = n^2 \tilde{P}_{\vec{n}}(z) (S_{\vec{n}}(z)^2 + n^{-1} S_{\vec{n}}'(z) )$$
and similarly that
    $$ \tilde{P}'''_{\vec{n}}(z) = n^3 \tilde{P}_{\vec{n}}(z) (S_{\vec{n}}(z)^3 + 3 n^{-1} S_{\vec{n}}(z) S_{\vec{n}}'(z) + n^{-2} S_{\vec{n}}''(z)).$$
By passing to the given subsequence $(n_k)_{k\in\N}$, we may assume that $S_{\vec{n}}$ converges uniformly to $S$ on compact subsets of $\C\backslash[0,c]$ as $n\to\infty$. In that case, the same is true for all the derivatives. We may therefore write
    $$ \tilde{P}^{(k)}_{\vec{n}}(z) = n^{k} \tilde{P}_{\vec{n}}(z) (S_{\vec{n}}^k(z) + \BO(n^{-1})),\quad n\to\infty,$$
which holds uniformly on compact subsets of $\C\backslash[0,c]$. If we plug this in the above equation for $\tilde{P}_{\vec{n}}(z)$, divide by $n^2 \tilde{P}_{\vec{n}}(z)$ and take the limit $n\to\infty$, we obtain the desired algebraic equation for $S(z)$.
\end{proof}

Observe that in the Laguerre case $(q_1,q_2)=(1,0)$, the cubic equation corresponds to the quadratic equation 
$$zS^2-zS+1=0,$$ 
which is, as expected, the algebraic equation for the Stieltjes transform of the asymptotic zero distribution of the scaled Laguerre polynomials $L_n^{(\alpha)}(nx)$ (the Marchenko-Pastur distribution).
\medbreak

\begin{thm} \label{II_AZD}
The density of the asymptotic zero distribution of $P_{\vec{n}}(nx)$ as $n\to\infty$ is given by
	$$ u(x;q_1,q_2) = \frac{1}{2^{4/3}3^{1/2}\pi} \frac{\sqrt[3]{r_{+}(x;q_1,q_2)} - \sqrt[3]{r_{-}(x;q_1,q_2)}}{\sqrt[3]{x^2}}, \quad x\in[0,c], $$
where
	$$ r_{\pm}(x;q_1,q_2) = 2x^2-9q_1x-27q_2 \pm 3\sqrt{3} \sqrt{-(1+q_2)^2x^2+(4q_1^3 + 18q_1q_2)x+27q_2^2} $$
and
    $$ c_{q_1,q_2} = \frac{(2q_1^3 + 9q_1q_2) + 2(q_2^2+q_2+1)^{3/2}}{(1+q_2)^2}.$$
\end{thm}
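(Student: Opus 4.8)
The plan is to solve the cubic equation for $S(z)$ supplied by Lemma \ref{II_alg_eq} with Cardano's formula, select the branch singled out by $zS(z)\to 1$, and recover $u$ from the Stieltjes--Perron inversion formula $u(x)=-\tfrac1\pi\lim_{\varepsilon\downarrow 0}\operatorname{Im}S(x+i\varepsilon)$. Dividing $z^2S^3-z^2S^2+q_1zS+q_2=0$ by $z^2$ and substituting $S=\tfrac13+T$ produces the depressed cubic $T^3+pT+q=0$ with
$$ p=\frac{q_1}{z}-\frac13,\qquad q=\frac{q_2}{z^2}+\frac{q_1}{3z}-\frac{2}{27}, $$
whose roots are $T=\varepsilon_1\sqrt[3]{R_+}+\varepsilon_2\sqrt[3]{R_-}$, where $\varepsilon_1\varepsilon_2=1$ range over the cube roots of unity and $R_\pm=-\tfrac{q}{2}\pm\sqrt{\tfrac{q^2}{4}+\tfrac{p^3}{27}}$. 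A direct computation, resting on the polynomial identity
$$ (-2z^2+9q_1z+27q_2)^2+4z(3q_1-z)^3=27z\bigl(-(1+q_2)^2z^2+(4q_1^3+18q_1q_2)z+27q_2^2\bigr), $$
which holds because $q_1+q_2=1$, then shows that $54z^2R_\pm=r_\pm(z;q_1,q_2)$ with $r_\pm$ as in the statement.

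Next I would pin down the physical branch. As $z\to\infty$ one has $\tfrac{q^2}{4}+\tfrac{p^3}{27}\to 0$ and $\sqrt[3]{-q/2}\to\tfrac13$, so the three roots of the cubic approach $T=\tfrac23$ (simple) and $T=-\tfrac13$ (double); since $zS(z)\to1$ forces $S\to 0$, i.e.\ $T\to-\tfrac13$, the relevant branch is the one with $\{\varepsilon_1,\varepsilon_2\}=\{e^{2\pi i/3},e^{-2\pi i/3}\}$, taking the real cube roots of $R_\pm$ on the support. This choice is the one that makes $u$ nonnegative, and in the Laguerre case $(q_1,q_2)=(1,0)$ it collapses to the Marchenko--Pastur square root noted after Lemma \ref{II_alg_eq}. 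On $(0,c)$ the radicand $\tfrac{q^2}{4}+\tfrac{p^3}{27}$ is positive (it equals $\tfrac{-(1+q_2)^2x^2+(4q_1^3+18q_1q_2)x+27q_2^2}{108x^3}$), so $R_+>R_-$ are real and
$$ \operatorname{Im}S(x+i0)=\frac{\sqrt3}{2}\bigl(\sqrt[3]{R_-}-\sqrt[3]{R_+}\bigr)=-\frac{\sqrt3}{2}\cdot\frac{\sqrt[3]{r_+(x;q_1,q_2)}-\sqrt[3]{r_-(x;q_1,q_2)}}{3\sqrt[3]{2}\,\sqrt[3]{x^2}}, $$
using $54x^2R_\pm=r_\pm$ and $\sqrt[3]{54}=3\sqrt[3]{2}$. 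Substituting into the inversion formula and simplifying $\tfrac{\sqrt3}{6\sqrt[3]{2}}=\tfrac{1}{2^{4/3}3^{1/2}}$ gives the stated density.

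It remains to identify the support and to justify that $S$ really is the Stieltjes transform of the asymptotic zero distribution. The physical root is complex exactly where $\tfrac{q^2}{4}+\tfrac{p^3}{27}>0$, i.e.\ where $z\bigl(-(1+q_2)^2z^2+(4q_1^3+18q_1q_2)z+27q_2^2\bigr)>0$; the downward parabola in the bracket has one negative and one positive root, and combining this with the facts established earlier (that $P_{\vec n}$ has $n$ simple zeros in $[0,\infty)$ and that the largest zero of $P_{\vec n}$ grows at most like $4n$) forces the limiting measure to live on $[0,c]$ with $c$ the positive root of that parabola. Solving the quadratic and using $q_1=1-q_2$ to rewrite its discriminant as $(4q_1^3+18q_1q_2)^2+108(1+q_2)^2q_2^2=16(q_2^2+q_2+1)^3$ yields precisely $c_{q_1,q_2}$. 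Finally, since the cubic of Lemma \ref{II_alg_eq}, and hence its branch selected by $zS(z)\to1$, is independent of the chosen subsequence, $S$ is indeed the Stieltjes transform of the asymptotic zero distribution of $\tilde P_{\vec n}(x)=P_{\vec n}(nx)$, whose density is therefore $u(x;q_1,q_2)$. I expect the main obstacle to be the branch bookkeeping in Cardano's formula together with the two algebraic identities above; the nonnegativity and unit mass of $u$ then follow respectively from the branch choice and from $zS(z)\to1$ (equivalently, from each $\mu_{\vec n}$ being a probability measure on a fixed compact set), and the termwise passage to the boundary in the Stieltjes--Perron limit is justified by the uniform convergence of $S_{\vec n}$ on compacts of $\C\setminus[0,c]$ recorded before Lemma \ref{II_alg_eq}.
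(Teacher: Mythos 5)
Your argument follows essentially the same route as the paper's: Cardano's formula applied to the cubic of Lemma \ref{II_alg_eq}, selection of the physical branch via $zS(z)\to1$ (which you supplement with nonnegativity of the resulting density, a legitimate substitute for the paper's explicit computation of the limits $zS_k(z)$ for all three roots), Stieltjes--Perron inversion, and identification of $c$ as the positive root of the discriminant quadratic. The only blemish is a spurious factor of $z$ on the right-hand side of your polynomial identity --- the left-hand side has degree $2$ in $z$ (the $z^4$ and $z^3$ terms cancel), so the correct right-hand side is $27\bigl(-(1+q_2)^2z^2+(4q_1^3+18q_1q_2)z+27q_2^2\bigr)$ and the radicand is this quadratic divided by $108x^4$ rather than $108x^3$; since your subsequent relation $54z^2R_\pm=r_\pm$ is exactly what the corrected identity yields, the final formulas for $u$ and $c_{q_1,q_2}$ are unaffected.
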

\begin{proof}
We can solve the cubic equation (\ref{II_alg_eq}) explicitly by making use of Cardano's formula. 
The three solutions are given by 
    $$ S_k(z) = \frac{1}{3} + \zeta^k T_{+}(z)^{1/3} + \bar{\zeta}^{k} T_{-}(z)^{1/3},\quad k=0,1,2, $$
in terms of the third root of unity $\zeta = e^{2\pi i/3}$ and the functions
    $$ T_{\pm}(z) = R(z) \pm (R(z)^2+Q(z)^3)^{1/2},\quad R(z) = \frac{-9q_1/z-27q_2/z^2+2}{54} ,\quad Q(z) = \frac{3q_1/z-1}{9}. $$
The square and cubic root may be taken along any branch cut; swapping to another branch of the cubic root would just permute the solutions $S_k(z)$, while changing the branch of the square root would simply interchange the roles of $T_{\pm}(z)$. 
We are only interested in the solution(s) with $zS(z)\to1$ for $z\to\infty$. By making use of a computer algebra system, we can find
    $$ z S_0(z) \to \infty,\quad z S_1(z) \to \frac{q_1-q_2-1}{2},\quad z S_2(z) \to \frac{q_1+q_2+1}{2} = 1.$$
Consequently, $S_2(z)$ is the Stieltjes transform of the probability measure that we are looking for. We can recover the underlying density $u(x)$ by making use of the Stieltjes-Perron inversion formula
(and $\zeta^2 = -(1+\sqrt{3} i)/2$)
    $$ u(x) = -\lim_{y\to0^{+}} \frac{1}{\pi} \text{Im}(S_2(x+iy)) = \frac{\sqrt{3}}{6\pi} \left(\frac{r_{+}(x)}{2x^2}\right)^{1/3} - \frac{\sqrt{3}}{6\pi} \left(\frac{r_{-}(x)}{2x^2}\right)^{1/3}. $$
Note that the complex roots have become real roots. For the cubic root, this means that $\sqrt[3]{x}=\text{sgn}(x) \sqrt[3]{|x|}$. It is justified to take the real square root, because the function in the argument is positive for $x\in[0,c]$. The constant $c$ corresponds to the largest zero of $u(x)$. The zeros of $u(x)$ are determined by the equation 
$$ -(1+q_2)^2x^2+(4q_1^3 + 18q_1q_2)x+27q_2^2 = 0, $$
which has solutions
    $$ x_{\pm} = \frac{(2q_1^3 + 9q_1q_2)\pm 2(q_2^2+q_2+1)^{3/2}}{(1+q_2)^2}.$$
We have $x_{-}<x_{+}$ and thus $c=x_{+}$.
\end{proof}
\medbreak

In the Laguerre case $(q_1,q_2)=(1,0)$, the above expression becomes
    $$ u(x;1,0) = \frac{1}{2 \pi} \frac{\sqrt{4-x}}{\sqrt{x}},\quad x\in[0,4], $$
which is the expected Marchenko-Pastur distribution. By making use of the basic identity $a-b = (a^3-b^3)/(a^2+ab+b^2)$, the connection between the two distributions can be made more clear:
    $$ u(x;q_1,q_2) = \frac{3}{2^{1/3}\pi} \frac{\sqrt{-(1+q_2)^2x^2+(4q_1^3 + 18q_1q_2)x+27q_2^2}}{\sqrt[3]{x^{2}} (\sqrt[3]{r_{+}(x;q_1,q_2)^2}+\sqrt[3]{r_{-}(x;q_1,q_2)^2} + \sqrt[3]{4x} (x-3q_1) ) } .  $$

In Fig. \ref{fig:zerodensity} we plotted the density $u(x;q_1,q_2)$ for several values of $q_1\in[\frac{1}{2},1]$.

\begin{figure}[h]
\centering
\includegraphics[width=4in]{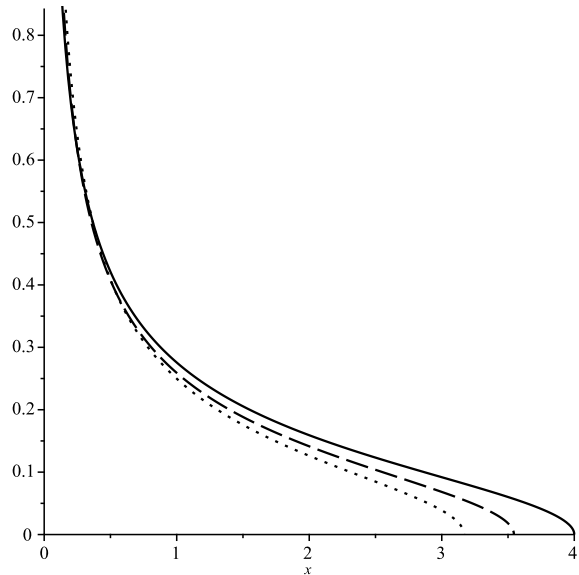}
\caption{The density $u(x;q_1,q_2)$ for $q_1=1$ (solid), $q_1=\frac34$ (dash) and
$q_1=\frac12$ (dots)}
\label{fig:zerodensity}
\end{figure}

The figure shows that as $q_1$ decreases, or equivalently $q_2$ increases, the zeros of the polynomials $P_{\vec{n}}$ are pulled closer towards $0$. Typically for Nikishin systems of two weights, the zeros of the type II polynomials are contained in the support of the first weight and are attracted to the support of the second weight. By increasing the contribution of the second weight, which had support $(-\infty,0]$, we thus observe that this effect becomes stronger.
\medbreak

The behavior of the density $u(x)$ near its endpoints is
    $$ u(x) \sim \frac{\sqrt{3}}{2\pi} q_2^{1/3} x^{-2/3},\quad x\to 0, $$
    $$ u(x) = \BO((c-x)^{1/2}),\quad x\to c. $$
Near the right endpoint, we have the usual square root behavior that also occurs for the asymptotic zero distribution of the Laguerre polynomials and the multiple Laguerre polynomials of the first kind on the step-line. Near the left endpoint, we have the same behavior as the latter but with a different constant.
\bigbreak

We end this section by noting that the asymptotic zero distribution $\nu$ of the scaled type I polynomial $\tilde{B}_{\vec{n}}(x)=B_{\vec{n}}(-nx)$ is the same as the asymptotic zero distribution $\mu$ of $\tilde{P}_{\vec{n}}(x)$, so that the asymptotic zero distribution of $B_{\vec{n}}(nx)$ has support $[-c,0]$ and is the reflection of $\mu$ over $0$. This follows from the fact that the Stieltjes transform $S(z;\nu)$ satisfies the same algebraic equation as $S(z;\mu)$ because the coefficients in the hypergeometric differential equations for $\tilde{B}_{\vec{n}}(x)$ and $\tilde{P}_{\vec{n}}(x)$ are asymptotically equivalent. It also explains the solution $S_1(z)$ in the proof of Theorem \ref{II_AZD}, which had a finite limit $zS(z)$ as $z\to\infty$ as well; the change of variables $z\mapsto -z$ corresponds to letting $q_1\mapsto-q_1$ so that now $-zS_1(-z)\to 1$ as $z\to\infty$.

\subsection{Connection to random matrix theory}

The multiple orthogonal polynomials that were investigated in the previous sections appear naturally in the study of the squared singular values of the product of a truncated Haar distributed random unitary matrix with a Ginibre matrix. A (complex) Ginibre matrix is a random matrix whose entries are independent standard complex Gaussian random variables, see \cite{Ginibre}. There is a similar connection between the multiple orthogonal polynomials associated with the $K$-Bessel functions in \cite{VAYakubovich} and the product of two Ginibre matrices as demonstrated in \cite{KuijlaarsZhang}. In the latter, it is also shown how products of more than two Ginibre matrices are related to multiple orthogonal polynomials associated with certain Meijer-$G$ functions. Products of several Ginibre matrices with a truncated unitary matrix and products that only consist of truncated unitary matrices were investigated in \cite{KuijlaarsStivigny} and \cite{KieburgKuijlaarsStivigny} respectively. A potential connection to multiple orthogonal polynomials however was not mentioned here.
\medbreak

It is known that the joint probability density of the squared singular values of a size $(n+a) \times n$ upper left corner of a random unitary matrix of size $(n+a+b)\times (n+a+b)$ (assuming that $b\geq n$) is given by a polynomial ensemble with probability density
    $$ \propto \Delta_n(x) \det\left[x_j^{a+k-1}(1-x_j)^{b-n}\right]_{j,k=1}^n,\quad \text{ all } x_j\in[0,1]. $$
Here $\Delta_n(x)$ denotes the Vandermonde determinant $\Delta_n(x)=\prod_{1\leq i<j\leq n} (x_j-x_i)$. On the other hand, the joint probability density of the squared singular values of a size $(n+a) \times n$ Ginibre matrix is determined by a polynomial ensemble with probability density
    $$ \propto \Delta_n(x) \det\left[x_j^{a+k-1}e^{-x_j}\right]_{j,k=1}^n, \quad \text{ all } x_j>0.$$
Observe that in these ensembles a beta and gamma density is used respectively. 
\medbreak

Suppose that $n=n_1+n_2$ and consider the product $TG$ of a size $(n+a+b) \times (n+a)$ matrix $T$ that arises by truncating a size $(n+n_2+a+b)\times (n+n_2+a+b)$ random unitary matrix with a Ginibre matrix $G$ of size $(n+a)\times n$. It was proven in \cite{KieburgKuijlaarsStivigny} that the joint probability density of the squared singular values of $TG$ is then given by a polynomial ensemble with probability density
\begin{equation} \label{PE_TG}
    \propto \Delta_n(x) \det\left[w_{k-1}(x_j)\right]_{j,k=1}^n, \quad \text{ all } x_j>0,
\end{equation}
in terms of weights $w_k$ that arise as the Mellin convolution of a beta and gamma density
    $$ w_k(x) = \int_0^1 t^{a+b} (1-t)^{n_2-1} (x/t)^{a+k} e^{-x/t} \frac{dt}{t},\quad x>0.$$
These weights are connected to the exponential integral in the following way
    $$ w_k(x) = x^{a+k} \sum_{j=0}^{n_2-1} (-1)^j \binom{n_2-1}{k} E_{b+j-k}(x) $$
(use the binomial theorem and the change of variables $t\mapsto 1/t$).

Typically one is interested in the correlation kernel
    $$ K_n(x,y) = \sum_{k=0}^{n-1} P_k(x)Q_k(y)$$
of such polynomial ensembles. The kernel may be built out of monic polynomials $P_k$ of degree $k$ and functions $Q_l$ in the linear span of $w_0,\dots,w_{l}$ that satisfy the biorthogonality conditions
\begin{equation*}
    \int_0^\infty P_k(x) Q_l(x) dx = \delta_{k,l},\quad k,l=0,\dots,n-1.
\end{equation*}
These are satisfied under the following conditions
\begin{equation} \label{orth_P}
    \int_0^\infty P_k(x) w_j(x) dx = 0,\quad j=0,\dots,k-1,
\end{equation}
\begin{equation} \label{orth_Q}
    \int_0^\infty Q_l(x) x^j dx = \delta_{l,j},\quad j=0,\dots,l.
\end{equation}    
It is not guaranteed that such a construction is possible, however if it exists, the $P_k$ and $Q_l$ are uniquely determined. Constructions using other types of biorthogonal functions $P_k$ and $Q_l$ are also allowed as long as they span the spaces $\text{span}\{1,\dots,x^{n-1}\}$ and $\text{span}\{w_0,\dots,w_{n-1}\}$ respectively. Particular interest goes to the monic polynomial $P_n$ of degree $n$ that extends the above biorthogonality to $k=n$, because it turns out to be the average characteristic polynomial $ \mathbb{E}[ \prod_{j=1}^n (x-x_j)] $ over the particles in the associated polynomial ensemble. 
\medbreak

The result below shows the connection between the functions that satisfy \eqref{orth_P}--\eqref{orth_Q} and the multiple orthogonal polynomials associated with the exponential integral that were studied in the previous sections.

\begin{prop}
The functions $Q_l$ that satisfy \eqref{orth_Q} are related to the type I functions in \eqref{F} in the following way
    $$Q_l(x) \propto x^{a} F_{l-n_2+1,n_2}^{a,b}(x),\quad 2n_2-2 \leq l \leq n-1.$$
The relation between the polynomials $P_k$ that satisfy \eqref{orth_P} and the type II polynomials in \eqref{II_diff} is
    $$ P_k(x)\propto P_{k-n_2,n_2}^{a,b}(x),\quad 2n_2-1 \leq k \leq n. $$
\end{prop}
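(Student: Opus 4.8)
The two assertions are proved independently; in each case the plan is to exhibit the multiple orthogonal polynomial on the right-hand side, verify that it satisfies (up to normalization) the conditions \eqref{orth_P}--\eqref{orth_Q} that characterize $P_k$ and $Q_l$, and conclude by the uniqueness statement recorded just before the proposition. As a by-product this also shows that $P_k$ and $Q_l$ exist for the indicated ranges of indices.

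For the functions $Q_l$ I would argue entirely on the Mellin side. Since $w_j$ is the Mellin convolution of the beta density $x^{a+b}(1-x)^{n_2-1}\chi_{[0,1]}(x)$ with the gamma-type weight $x^{a+j}e^{-x}$, its Mellin transform is $\hat w_j(s)=(n_2-1)!\,\Gamma(s+a+j)/(s+a+b)_{n_2}$. Writing $\Gamma(s+a+i)=(s+a)_i\,\Gamma(s+a)$, every element of $\operatorname{span}\{w_0,\dots,w_l\}$ therefore has Mellin transform $(n_2-1)!\,\Gamma(s+a)\,p(s)/(s+a+b)_{n_2}$ with $\deg p\leq l$. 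The conditions \eqref{orth_Q} read $\hat Q_l(j+1)=\delta_{l,j}$ for $j=0,\dots,l$; since $a,a+b>-1$ the factor $\Gamma(j+1+a)/(j+1+a+b)_{n_2}$ is finite and nonzero at these points, so they force $p$ to vanish at $s=1,\dots,l$, whence $p(s)=c\,(1-s)_l$ for a constant $c\neq 0$ fixed by the normalization at $s=l+1$. On the other hand Lemma \ref{I_Mellin_lem} (shift $s\mapsto s+a$; equivalently the identity displayed in the proof of Theorem \ref{I_2F2}) shows that the Mellin transform of $x^a F_{l-n_2+1,n_2}^{a,b}(x)$ is proportional to $\Gamma(s+a)(1-s)_l/(s+a+b)_{n_2}$, valid precisely when $n+1=l-n_2+2\geq n_2=m$, i.e.\ $l\geq 2n_2-2$. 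Thus $\hat Q_l$ is a nonzero constant multiple of that transform, and injectivity of the Mellin transform gives $Q_l\propto x^a F_{l-n_2+1,n_2}^{a,b}$.

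For the polynomials $P_k$ I would use the differentiation formula \eqref{II_diff}: with $\alpha=a$, $\nu=b$ and $(n,m)=(k-n_2,n_2)$, admissible exactly when $k\geq 2n_2-1$, it presents $P_{k-n_2,n_2}^{a,b}(x)=x^{-a-b}\frac{d^{n_2}}{dx^{n_2}}[x^{n_2+a+b}L_k^{(a)}(x)]$ as a polynomial of degree exactly $k$ (leading coefficient $\tfrac{(-1)^k}{k!}(k+a+b+1)_{n_2}\neq 0$). It then remains to check $\int_0^\infty P_{k-n_2,n_2}^{a,b}(x)\,w_j(x)\,dx=0$ for $j=0,\dots,k-1$. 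The substitutions $t\mapsto 1/t$ and then $t\mapsto x/t$ in the convolution defining $w_j$ give $w_j(x)=x^{a+b}h_j(x)$ with $h_j(x)=\int_x^\infty u^{j-b-1}(1-x/u)^{n_2-1}e^{-u}\,du$, so the factor $x^{a+b}$ cancels the $x^{-a-b}$ and one integrates by parts $n_2$ times to move all derivatives onto $h_j$. Because $(1-x/u)^{n_2-1}$ vanishes to order $n_2-1$ at $u=x$, the first $n_2-1$ differentiations of $h_j$ contribute no boundary term at the lower limit, and a short induction yields $h_j^{(n_2)}(x)=(-1)^{n_2}(n_2-1)!\,x^{j-b-n_2}e^{-x}$; the integral collapses to $(n_2-1)!\int_0^\infty x^{a+j}L_k^{(a)}(x)e^{-x}\,dx$, which vanishes for $j\leq k-1$ by the orthogonality of $L_k^{(a)}$. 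Uniqueness of the monic $P_k$ then gives $P_k\propto P_{k-n_2,n_2}^{a,b}$.

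The step I expect to cost the most work is the justification of the repeated integration by parts in the $P_k$ argument: the derivatives of $h_j$ need not be bounded near the origin (already $h_j(0)=\Gamma(j-b)$ diverges when $j\leq b$), so one must check that each boundary term at $0$ vanishes. This follows from the estimates $\tfrac{d^{\ell}}{dx^{\ell}}[x^{n_2+a+b}L_k^{(a)}(x)]=O(x^{n_2+a+b-\ell})$ and $h_j^{(n_2-1-\ell)}(x)=O(x^{j-b-n_2+\ell+1})+O(1)$ as $x\to 0^{+}$ (the latter seen by the substitution $u=xv$), whose product is $O(x^{a+j+1})\to 0$; the boundary terms at $\infty$ vanish by the exponential decay of $h_j$ and its derivatives. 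The $Q_l$ part, by contrast, is essentially a computation once $\hat w_j$ is in hand.
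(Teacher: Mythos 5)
Your proof is correct, but the two halves relate to the paper's proof differently. For the functions $Q_l$ you follow essentially the same route as the paper: compute $\hat w_j(s)=(n_2-1)!\,\Gamma(s+a+j)/(s+a+b)_{n_2}$, observe that the span of $w_0,\dots,w_l$ is characterized on the Mellin side by $\Gamma(s+a)p(s)/(s+a+b)_{n_2}$ with $\deg p\leq l$, let the conditions \eqref{orth_Q} force $p(s)\propto(1-s)_l$, and match against \eqref{I_Mellin}; your version is if anything slightly more explicit than the paper's, which only gestures at ``the same argument as in Lemma \ref{I_Mellin_lem}.'' For the polynomials $P_k$, however, you take a genuinely different path. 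The paper imports the contour-integral representation of $P_k$ from Kuijlaars--Stivigny, evaluates it by residues as a ${}_2F_2$, and compares with the hypergeometric form \eqref{II_2F2}; you instead verify the defining orthogonality \eqref{orth_P} directly for the polynomial given by the differentiation formula \eqref{II_diff}, using the factorization $w_j(x)=x^{a+b}h_j(x)$ so that $x^{\pm(a+b)}$ cancels, and then $n_2$ integrations by parts collapse everything to the classical Laguerre orthogonality (the computation $h_j^{(n_2)}(x)=(-1)^{n_2}(n_2-1)!\,x^{j-b-n_2}e^{-x}$ is right, since $(u-x)^{n_2-1}$ kills all boundary contributions at $u=x$ until the last differentiation). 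What your approach buys is self-containedness and elementarity -- no external integral representation is needed, and the mechanism (moving derivatives from the Rodrigues-type factor onto the weight) mirrors the proof of Lemma \ref{II_red}; what it costs is exactly the boundary analysis at $x=0$ that you flag, which you handle correctly: the product of the two asymptotic estimates is $O(x^{a+j+1})$ plus an $O(x^{a+b+1})$ contribution from the bounded part of $h_j^{(\ell)}$, and both exponents are positive. Both arguments rest equally on the uniqueness of $P_k$ and $Q_l$ stated before the proposition, so no additional hypothesis is smuggled in.
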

\begin{proof}
The weight $w_j$ has Mellin transform
    $$ \int_0^\infty w_j(x) x^{s-1} dx = (n_2-1)! \frac{\Gamma(s+a+j)}{(s+a+b)_{n_2}} .$$
Using the same argument as in the proof of Lemma \ref{I_Mellin_lem}, it can be shown that $Q_l$ has a Mellin transform of the form
    $$ \int_0^\infty Q_l(x) x^{s-1} dx \propto \frac{\Gamma(s+a)}{(s+a+b)_{n_2}} (1-s)_{l}.$$
The first result then follows by comparing this to the Mellin transform \eqref{I_Mellin} of the type~I function $F_{l-n_2+1,n_2}^{a,b}$. Similarly as in \cite[\S 4.2]{KuijlaarsStivigny}, we may derive an integral representation for $P_k$ of the form
    $$ P_k(x) \propto \int_{\Sigma_k} \frac{1}{(-t)_{k+1}} \frac{(t+a+b+1)_{n_2}}{\Gamma(t+a+1)} x^{t} dt, $$
where $\Sigma_k$ is a closed contour in the domain $\text{Re}(t)>-1$ that encircles the interval $[0,k]$ once in the positive direction. We may apply the residue theorem in a similar fashion as in the proof of Proposition \ref{I_2F2} to evaluate this contour integral as a ${}_2F_2$ hypergeometric series. After comparing with the ${}_2F_2$ hypergeometric series \eqref{II_2F2} for $ P_{k-n_2,n_2}^{a,b}(x)$, we obtain the second result. 
\end{proof}

\begin{cor}
The average characteristic polynomial $P_n$ over the particles in the polynomial ensemble \eqref{PE_TG} is given by the monic type II polynomial $P_{n_1,n_2}^{a,b}$.
\end{cor}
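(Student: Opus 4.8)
The corollary follows almost at once from the preceding proposition, once we add the one standard fact that is not proved in the excerpt: the average characteristic polynomial of a polynomial ensemble is the degree-$n$ member of its associated biorthogonal family.

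First I would recall this identification. For a polynomial ensemble of the form \eqref{PE_TG} one has, using the factorization $\Delta_{n+1}(x_1,\dots,x_n,z)=\bigl(\prod_{j=1}^n(z-x_j)\bigr)\Delta_n(x)$, Andr\'eief's identity, and the biorthogonality \eqref{orth_P}--\eqref{orth_Q},
$$ \mathbb{E}\Bigl[\prod_{j=1}^n (z-x_j)\Bigr] = P_n(z), $$
where $P_n$ denotes the monic polynomial of degree $n$ extending \eqref{orth_P} to $k=n$, i.e.\ the monic degree-$n$ polynomial with $\int_0^\infty P_n(x)w_j(x)\,dx=0$ for $j=0,\dots,n-1$. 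This is precisely the polynomial singled out in the paragraph just before the proposition, and the identity is classical for polynomial and biorthogonal ensembles (see, e.g., \cite{KuijlaarsStivigny,KieburgKuijlaarsStivigny} and the references therein), so it would suffice to cite it. I would also note in passing that such a $P_n$ exists and is unique: by the proposition the underlying biorthogonal system is governed by the multiple orthogonality for the weights $(x^a e^{-x},x^a E_{b+1}(x))$, and for the parameters coming from the matrix sizes ($a\ge 0$, $b\ge n$) we have $a>-1$, $a+b>-1$ and $b\notin\Z_{\le -1}$, so this system is perfect and the relevant type II polynomial exists.

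Next I would specialize the preceding proposition to $k=n$. This is permitted because that proposition covers $P_k$ for $2n_2-1\le k\le n$, and $k=n$ lies in this range exactly when $n\ge 2n_2-1$, i.e.\ $n_1+1\ge n_2$, which is the standing hypothesis of Section \ref{STII}. The proposition then gives that $P_n(x)$ is a nonzero scalar multiple of $P_{n_1,n_2}^{a,b}(x)$, the type II polynomial \eqref{II_diff} for the multi-index $(n_1,n_2)$ and parameters $(\alpha,\nu)=(a,b)$. Since $P_n$ is monic of degree $n=n_1+n_2$, it must be the monic representative of that family; normalizing, this shows $P_n=P_{n_1,n_2}^{a,b}$ (the monic type II polynomial), which is the assertion of the corollary.

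The only step that requires any care is the first, namely invoking --- or, if one wants a self-contained argument, reproving via Andr\'eief's identity --- the identification of $\mathbb{E}[\prod_{j=1}^n(x-x_j)]$ with the monic polynomial $P_n$; everything after that is a one-line specialization of the proposition together with the monic normalization, so I anticipate no genuine obstacle.
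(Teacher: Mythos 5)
Your proposal is correct and follows essentially the same route as the paper: the paper states the identification of the average characteristic polynomial with the monic degree-$n$ biorthogonal polynomial in the paragraph preceding the proposition (as a known fact from the theory of polynomial ensembles) and then treats the corollary as the immediate specialization $k=n$ of the proposition, exactly as you do. Your additional checks (the range condition $2n_2-1\le n$ reducing to $n_1+1\ge n_2$, and the monic normalization) are the same implicit steps the paper relies on, just spelled out.
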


\section{Mixed type multiple orthogonal polynomials} \label{SMT}
We will investigate two families of mixed type multiple orthogonal polynomials that involve pairs of weights $(e^{-x},E_{\nu+1}(x))$ and $(x^\alpha,x^\beta)$; the orthogonality conditions will be a mixture of type I and type II orthogonality conditions. A similar consideration also appears in Ap\'ery's construction of rational approximants for $\zeta(3)$, see Beukers \cite{Beukers}. Mixed type multiple orthogonal polynomials also appear in the work of Sorokin \cite{Sor_mixed} and in the study of non-intersecting Brownian motions \cite{DaemsKuijl}. We will assume that the parameters satisfy the conditions $\alpha,\beta,\nu,\alpha+\nu,\beta+\nu>-1$ and $\alpha-\beta\not\in\Z$.

\subsection{First family}
We will first investigate the mixed type multiple orthogonal polynomials with respect to the two pairs of weights $(e^{-x},E_{\nu+1}(x))$ and $(x^{\alpha},x^{\beta})$ on $[0,\infty)$. They will depend on two multi-indices $\vec{n}=(n_1,n_2)$ and $\vec{m}=(m_1,m_2)$ that will determine the degrees of the polynomials and the number of orthogonality conditions respectively. The multi-indices are connected through the relation $\left|\vec{n}\right|-1=\left|\vec{m}\right|$. We are looking for polynomials $A_{\vec{n},\vec{m}}$ and $B_{\vec{n},\vec{m}}$ with $\deg A_{\vec{n},\vec{m}}\leq n_1-1$ and $\deg B_{\vec{n},\vec{m}}\leq n_2-1$, for which the function
\begin{equation} \label{defF_MT}
    F_{\vec{n},\vec{m}}(x) = A_{\vec{n},\vec{m}}(x) e^{-x} + B_{\vec{n},\vec{m}}(x) E_{\nu+1}(x)
\end{equation}
satisfies the orthogonality conditions
\begin{align}
     \int_0^\infty F_{\vec{n},\vec{m}}(x) x^{k+\alpha} \, dx &= 0, \quad k=0,\dots,m_1-1,   \label{Mortho1} \\
     \int_0^\infty F_{\vec{n},\vec{m}}(x) x^{k+\beta} \, dx &= 0, \quad k=0,\dots,m_2-1.   \label{Mortho2}
\end{align}     
The polynomials $A_{\vec{n},\vec{m}}$ and $B_{\vec{n},\vec{m}}$ have $\left|\vec{n}\right|$ coefficients and the system of equations \eqref{Mortho1}--\eqref{Mortho2} corresponds to a homogeneous linear system of $\left|\vec{m}\right|$ equations. Due to the condition $\left|\vec{m}\right|=\left|\vec{n}\right|-1$, existence of a non-zero solution is guaranteed. If the matrix of the system has maximal rank, the coefficients of $A_{\vec{n},\vec{m}}$ and $B_{\vec{n},\vec{m}}$ are uniquely determined up to a scalar multiplication.
\medbreak

The (degenerate) case $m_2=0$ corresponds to the setting of the type I polynomials as before. As a consequence, we will encounter some similar properties. On the other hand, the (degenerate) case $n_2=0$ corresponds to the setting of the type II multiple Laguerre polynomial of the first kind. This suggests that, instead of a Laguerre polynomial $L_{n}^{(\alpha)}(x)$, now a type II multiple Laguerre polynomial of the first kind $L_{\vec{n}}^{(II\mid\alpha,\beta)}(x)$ may play a role. Explicit formulas for these polynomials are well-known; a Rodrigues formula can for example be found in \cite[\S 23.4.1]{Ismail}
\begin{equation} \label{LaguerreII_Rodr}
    L_{\vec{n}}^{(II\mid\alpha,\beta)}(x) = (-1)^{\left|\vec{n}\right|} e^{x} x^{-\alpha} \frac{d^{n_1}}{dx^{n_1}}\left[x^{n_1+\alpha-\beta}\frac{d^{n_2}}{dx^{n_2}}\left[x^{n_2+\beta}e^{-x}\right]\right]  .
\end{equation}

In what follows, we will assume that the multi-index $\vec{n}$ satisfies $n_1+1\geq n_2$. This is the natural extension of the restriction we had before; the case $m_2=0$ should correspond to the type I setting.

\subsubsection{Convolution and Rodrigues formula} 
Our approach will be similar as in Section \ref{I_S1}.

\begin{lem}
There is a mixed type function $F_{\vec{n},\vec{m}}$ with Mellin transform
\begin{equation} \label{mixedI_Mellin}
    \hat{F}_{\vec{n},\vec{m}}(s) = \frac{\Gamma(s)}{(s+\nu)_{n_2}} (\alpha+1-s)_{m_1} (\beta+1-s)_{m_2},\quad s>\max\{0,-\nu\}.
\end{equation}
\end{lem}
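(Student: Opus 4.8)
The plan is to mirror the proof of Lemma~\ref{I_Mellin_lem} almost verbatim, since the mixed type function \eqref{defF_MT} has exactly the same shape $A_{\vec{n},\vec{m}}(x)e^{-x}+B_{\vec{n},\vec{m}}(x)E_{\nu+1}(x)$ as the ordinary type~I function, only with the orthogonality conditions \eqref{Mortho1}--\eqref{Mortho2} distributed over two weights $x^\alpha$ and $x^\beta$ instead of a single power weight. First I would expand $A_{\vec{n},\vec{m}}(x)=\sum_{k=0}^{n_1-1}a_k x^k$ and $B_{\vec{n},\vec{m}}(x)=\sum_{l=0}^{n_2-1}b_l x^l$, compute the Mellin transform term by term using $\int_0^\infty e^{-x}x^{k+s-1}\,dx=\Gamma(k+s)$ and $\int_0^\infty E_{\nu+1}(x)x^{l+s-1}\,dx=\Gamma(l+s)/(l+s+\nu)$ (the latter from the moment formula given in the introduction, valid for $s>\max\{0,-\nu\}$), factor out $\Gamma(s)$ and the common denominator $(s+\nu)_{n_2}$, and conclude that
$$ \hat{F}_{\vec{n},\vec{m}}(s)=\frac{\Gamma(s)}{(s+\nu)_{n_2}}\,g(s) $$
for some polynomial $g$ with $\deg g\le\max\{n_1+n_2-1,\,2n_2-2\}=n_1+n_2-1$, the last equality using the hypothesis $n_1+1\ge n_2$.

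Next I would count zeros. The orthogonality conditions \eqref{Mortho1}--\eqref{Mortho2} say precisely that $\hat{F}_{\vec{n},\vec{m}}(s)$ vanishes at the $m_1$ points $s=\alpha+1,\dots,\alpha+m_1$ and the $m_2$ points $s=\beta+1,\dots,\beta+m_2$. Since $\alpha-\beta\notin\Z$ these $m_1+m_2=\left|\vec{m}\right|=\left|\vec{n}\right|-1=n_1+n_2-1$ points are distinct, and none of them is a pole of $\Gamma(s)/(s+\nu)_{n_2}$ in the half-plane $s>\max\{0,-\nu\}$, so they must all be zeros of $g$. As $\deg g\le n_1+n_2-1$, this forces $g$ to be a nonzero scalar multiple of $(\alpha+1-s)_{m_1}(\beta+1-s)_{m_2}$, which gives \eqref{mixedI_Mellin} up to the normalizing constant; rescaling $F_{\vec{n},\vec{m}}$ produces the stated Mellin transform exactly.

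The one genuine subtlety — and the point I would be most careful about — is the degree bookkeeping that guarantees $g$ is not merely divisible by, but actually equal to (up to scalar) $(\alpha+1-s)_{m_1}(\beta+1-s)_{m_2}$: this needs $\deg g\le n_1+n_2-1$, which is where $n_1+1\ge n_2$ enters (it makes $2n_2-2\le n_1+n_2-1$), exactly as in the original lemma. I would also note explicitly that we should exhibit such an $F_{\vec{n},\vec{m}}$ rather than merely bound its transform: a nonzero solution of the homogeneous system exists because \eqref{Mortho1}--\eqref{Mortho2} is $\left|\vec{m}\right|=\left|\vec{n}\right|-1$ linear conditions on the $\left|\vec{n}\right|$ coefficients, and for any such nonzero solution the argument above shows its (nonzero) Mellin transform has the claimed form, so after normalization we obtain one with transform exactly \eqref{mixedI_Mellin}. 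Everything else is the routine partial-fraction and common-denominator manipulation already carried out in Lemma~\ref{I_Mellin_lem}, so I would not reproduce it in detail.
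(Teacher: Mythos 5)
Your proposal is correct and follows essentially the same route as the paper: expand the polynomials, factor the Mellin transform as $\Gamma(s)/(s+\nu)_{n_2}$ times a polynomial of degree at most $\max\{n_1+n_2-1,2n_2-2\}=n_1+n_2-1$ (using $n_1+1\ge n_2$), and match the $m_1+m_2$ zeros forced by the orthogonality conditions to conclude the polynomial is a scalar multiple of $(\alpha+1-s)_{m_1}(\beta+1-s)_{m_2}$. Your added remarks on the distinctness of the zeros (via $\alpha-\beta\notin\Z$) and on the existence of a nonzero $F_{\vec{n},\vec{m}}$ are correct and slightly more explicit than the paper's proof, which simply defers to Lemma~\ref{I_Mellin_lem}.
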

\begin{proof}
The Mellin transform of a mixed type function may be computed using the same approach as in the proof of Lemma \ref{I_Mellin_lem}. In this way, we can obtain
    $$ \hat{F}_{\vec{n},\vec{m}}(s) = \frac{\Gamma(s)}{(s+\nu)_{n_2}} \left( p_{n_1-1}(s)(s+\nu)_{n_2} + q_{2n_2-2}(s) \right) $$
in terms of polynomials with their subscript as maximal degree. The degree of the polynomial between the parentheses is less than or equal to $\max\{n_1+n_2-1,2n_2-2\}$, which is equal to $n_1+n_2-1=m_1+m_2$ under the assumption that $n_1\geq n_2-1$. Since the orthogonality conditions imply that $\hat{F}_{\vec{n},\vec{m}}(s)$ has zeros at the $m_1+m_2$ values $s=\alpha+1,\dots,\alpha+m_1$ and $s=\beta+1,\dots,\beta+m_2$, the remaining polynomial must be a non-zero scalar multiple of $(\alpha+1-s)_{m_1} (\beta+1-s)_{m_2}$.
\end{proof}

We will compare this Mellin transform to the Mellin transform of a type II Laguerre polynomial of the first kind multiplied by a gamma density, which we will compute below.

\begin{lem} \label{LaguerreII_transf}
The Mellin transform of $L_{\vec{n}}^{(II\mid\alpha,\beta)}(x)e^{-x}$ is given by
$$\int_0^\infty L_{\vec{n}}^{(II\mid\alpha,\beta)}(x) e^{-x} x^s dx = (-1)^{\left|\vec{n}\right|}\Gamma(s)(\alpha+1-s)_{n_1}(\beta+1-s)_{n_2},\quad s>0.$$
\end{lem}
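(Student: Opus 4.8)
The plan is to mimic the proof of Lemma \ref{Laguerre_transf} for the ordinary Laguerre polynomials, but now using the Rodrigues formula \eqref{LaguerreII_Rodr} for the type II multiple Laguerre polynomial of the first kind. First I would substitute the Rodrigues formula into the Mellin integral, writing
$$ \int_0^\infty L_{\vec{n}}^{(II\mid\alpha,\beta)}(x) e^{-x} x^s \, dx = (-1)^{\left|\vec{n}\right|} \int_0^\infty x^{s-\alpha} \frac{d^{n_1}}{dx^{n_1}}\left[x^{n_1+\alpha-\beta}\frac{d^{n_2}}{dx^{n_2}}\left[x^{n_2+\beta}e^{-x}\right]\right] dx. $$
Then I would integrate by parts $n_1$ times to move the outer derivative onto the factor $x^{s-\alpha}$. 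Each integration by parts produces a boundary term of the form $\left[x^{s-\alpha-j}\left(\cdots\right)\right]_0^\infty$ up to constants; I would note that these vanish, at $\infty$ because of the $e^{-x}$ decay and at $0$ provided $s$ is large enough (the hypothesis $s>0$, together with the parameter restrictions, should ensure this — this is the one point to check carefully). After the $n_1$ integrations by parts the integral becomes, up to the sign $(-1)^{n_1}$ which combines with $(-1)^{\left|\vec{n}\right|}$ to give $(-1)^{n_2}$,
$$ (-1)^{n_2}(\alpha+1-s)_{n_1}\int_0^\infty x^{s-\alpha-n_1}\, x^{n_1+\alpha-\beta}\frac{d^{n_2}}{dx^{n_2}}\left[x^{n_2+\beta}e^{-x}\right] dx = (-1)^{n_2}(\alpha+1-s)_{n_1}\int_0^\infty x^{s-\beta}\frac{d^{n_2}}{dx^{n_2}}\left[x^{n_2+\beta}e^{-x}\right] dx, $$
where I used $\frac{d^{n_1}}{dx^{n_1}}x^{s-\alpha} = (s-\alpha)(s-\alpha-1)\cdots(s-\alpha-n_1+1)\,x^{s-\alpha-n_1} = (-1)^{n_1}(\alpha+1-s)_{n_1} x^{s-\alpha-n_1}$, hence the sign bookkeeping works out to leave $(\alpha+1-s)_{n_1}$.

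Next I would integrate by parts $n_2$ more times in the remaining integral to move the inner derivative onto $x^{s-\beta}$, exactly as in Lemma \ref{Laguerre_transf}. The boundary terms again vanish under the same reasoning, and $\frac{d^{n_2}}{dx^{n_2}}x^{s-\beta} = (-1)^{n_2}(\beta+1-s)_{n_2}x^{s-\beta-n_2}$, so this step contributes a factor $(-1)^{n_2}(\beta+1-s)_{n_2}$ which cancels the leftover $(-1)^{n_2}$ and reduces the problem to
$$ (\alpha+1-s)_{n_1}(\beta+1-s)_{n_2}\int_0^\infty x^{s-\beta-n_2}\,x^{n_2+\beta}e^{-x}\, dx = (\alpha+1-s)_{n_1}(\beta+1-s)_{n_2}\int_0^\infty x^{s}e^{-x}\,dx. $$
The remaining integral is $\Gamma(s+1)$; however the claimed answer has $\Gamma(s)$, so I would double-check the statement — most likely the intended integrand is $x^{s-1}$ (matching the Mellin transform convention $\hat f(s)=\int_0^\infty f(x)x^{s-1}dx$ used throughout, and matching the exponent in \eqref{mixedI_Mellin}), in which case the final integral is exactly $\Gamma(s)$ and everything is consistent. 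Either way the algebra is identical up to a shift in $s$.

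\textbf{Main obstacle.} The calculation itself is routine; the only genuinely delicate point is the justification that all the boundary terms arising in the $n_1+n_2$ integrations by parts vanish at $x=0$. After $j$ integrations by parts the boundary term involves $x$ raised to a power that decreases with $j$, combined with a derivative of $x^{\gamma}e^{-x}$-type factors; one must verify that the total power of $x$ is strictly positive at each stage for $s$ in the stated range (or invoke analytic continuation in $s$ from a region where this is clear). This is precisely the same bookkeeping that is tacitly used in Lemma \ref{Laguerre_transf}, so I would handle it the same way — either by a direct check that each boundary exponent stays positive under the hypotheses $\alpha,\beta,\alpha-\beta,\dots$ as assumed, or by first establishing the identity for $\operatorname{Re}(s)$ large and then extending by analyticity of both sides.
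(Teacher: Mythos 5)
Your proposal is correct and follows essentially the same route as the paper: substitute the Rodrigues formula \eqref{LaguerreII_Rodr} and integrate by parts $n_1$ times to produce $(\alpha+1-s)_{n_1}$, after which the paper simply recognizes the remaining integral as the Mellin transform of $n_2!\,L_{n_2}^{(\beta)}(x)e^{-x}$ and invokes Lemma \ref{Laguerre_transf} instead of writing out the final $n_2$ integrations by parts as you do. Your suspicion about the exponent is also confirmed: the paper's own proof works with $x^{s-1}\,dx$ (the standard Mellin convention), so the $x^s$ in the statement is a typo, and with that correction your Pochhammer bookkeeping is exactly right.
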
 
\begin{proof}
We may use the Rodrigues formula \eqref{LaguerreII_Rodr} and perform integration by parts $n_1$ times to get
$$\begin{aligned}
  \int_0^\infty  L_{\vec{n}}^{(II\mid\alpha,\beta)}(x) e^{-x} x^{s-1} dx  &= (-1)^{\left|\vec{n}\right|+n_1} \int_0^\infty \left(x^{n_1+\alpha-\beta}\frac{d^{n_2}}{dx^{n_2}} x^{n_2+\beta} e^{-x}\right) \left(\frac{d^{n_1}}{dx^{n_1}} x^{s-\alpha-1}\right) dx  \\
  &= (-1)^{\left|\vec{n}\right|} (\alpha+1-s)_{n_1} \int_0^\infty \left(\frac{d^{n_2}}{dx^{n_2}} x^{n_2+\beta} e^{-x}\right) x^{-\beta+s-1} dx .
\end{aligned}$$
The remaining integral is the Mellin transform of $n_2! L_{n_2}^{(\beta)}(x)e^{-x}$ and was computed in Lemma \ref{Laguerre_transf}.
\end{proof}

The previous result allows us to write the mixed type function as a Mellin convolution in which a type II Laguerre polynomial of the first kind appears.

\begin{thm}
The mixed type function with Mellin transform \eqref{mixedI_Mellin} is given by
\begin{equation} \label{mixedI_conv}
    F_{\vec{n},\vec{m}}(x) = \int_{x}^{\infty} L_{\vec{m}}^{(II\mid\alpha,\beta)}(t) e^{-t} \left(1-\frac{x}{t}\right)^{n_2-1} \left(\frac{x}{t}\right)^{\nu} \, \frac{dt}{t}
\end{equation}
multiplied by $(-1)^{\left|\vec{n}\right|}/(n_2-1)!$.
\end{thm}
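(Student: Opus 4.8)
The plan is to follow the proof of Theorem~\ref{I_conv} in spirit: factor the Mellin transform \eqref{mixedI_Mellin} as a product of two Mellin transforms that can be identified explicitly, and then appeal to the multiplicative property of the Mellin convolution together with the uniqueness of the Mellin transform. Concretely, for $s>\max\{0,-\nu\}$ one writes
\[
\frac{\Gamma(s)}{(s+\nu)_{n_2}}(\alpha+1-s)_{m_1}(\beta+1-s)_{m_2}
=\Bigl[\Gamma(s)(\alpha+1-s)_{m_1}(\beta+1-s)_{m_2}\Bigr]\cdot\Bigl[\frac{1}{(s+\nu)_{n_2}}\Bigr],
\]
and recognises each bracket.

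The first bracket is, up to an explicit sign, the Mellin transform of $L_{\vec m}^{(II\mid\alpha,\beta)}(x)e^{-x}$ by Lemma~\ref{LaguerreII_transf}; this is where the standing hypothesis $\alpha-\beta\notin\Z$ is used, so that $\vec m$ is a normal index for the AT-system $(x^\alpha e^{-x},x^\beta e^{-x})$ and $L_{\vec m}^{(II\mid\alpha,\beta)}$ is the genuine type~II multiple Laguerre polynomial of the first kind with Rodrigues formula \eqref{LaguerreII_Rodr}. The second bracket is a beta integral: since
\[
\int_0^1(1-x)^{n_2-1}x^{\nu+s-1}\,dx=B(s+\nu,n_2)=\frac{(n_2-1)!}{(s+\nu)_{n_2}},\qquad s>-\nu,
\]
the factor $1/(s+\nu)_{n_2}$ is $(n_2-1)!^{-1}$ times the Mellin transform of the beta density $(1-x)^{n_2-1}x^{\nu}\chi_{[0,1]}(x)$. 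Both Mellin transforms are holomorphic on the common half-line $s>\max\{0,-\nu\}$, so the displayed product is legitimately the Mellin transform of the Mellin convolution of $L_{\vec m}^{(II\mid\alpha,\beta)}(x)e^{-x}$ with $(1-x)^{n_2-1}x^{\nu}\chi_{[0,1]}(x)$, up to an explicit scalar.

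It then remains to spell out this convolution. Writing $(f\ast g)(x)=\int_0^\infty f(t)\,g(x/t)\,dt/t$ with $f(t)=L_{\vec m}^{(II\mid\alpha,\beta)}(t)e^{-t}$ and $g$ the beta density, the factor $\chi_{[0,1]}(x/t)$ restricts the $t$-integration to $t\geq x$, which reproduces exactly the integral in \eqref{mixedI_conv}; collecting the accumulated scalar (the sign produced by Lemma~\ref{LaguerreII_transf}, the reciprocal $(n_2-1)!$ from the beta integral, and the relation $|\vec m|=|\vec n|-1$) yields the normalisation stated in the theorem. By uniqueness of the Mellin transform, the function obtained in this way coincides with the mixed type function $F_{\vec n,\vec m}$ of the preceding lemma. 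I do not anticipate a genuine obstacle; the only points requiring care are the bookkeeping of signs and factorial constants and the verification that the strips of holomorphy of the two factors overlap (they do, being half-lines to the right), which is what legitimises both the multiplicative property and the inversion recovering $F_{\vec n,\vec m}$.
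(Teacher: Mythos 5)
Your proposal is correct and follows essentially the same route as the paper's own proof: factor the Mellin transform \eqref{mixedI_Mellin} into the transform of $L_{\vec m}^{(II\mid\alpha,\beta)}(x)e^{-x}$ (Lemma \ref{LaguerreII_transf}) times the beta integral $B(s+\nu,n_2)=(n_2-1)!/(s+\nu)_{n_2}$, then invoke the multiplicative property of the Mellin convolution and uniqueness of the Mellin transform, with the restriction of the convolution integral to $t\geq x$ coming from the indicator $\chi_{[0,1]}(x/t)$. The only caveat is in the final sign bookkeeping: Lemma \ref{LaguerreII_transf} applied to the index $\vec m$ produces $(-1)^{|\vec m|}=(-1)^{|\vec n|-1}$, so you should check carefully that this really matches the prefactor $(-1)^{|\vec n|}$ claimed in the statement rather than asserting agreement.
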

\begin{proof}
The Mellin transform \eqref{mixedI_Mellin} is $(-1)^{\left|\vec{n}\right|}/(n_2-1)!$ times the product of the Mellin transform of a type II Laguerre polynomial multiplied with a gamma density
$$\int_0^\infty L_{\vec{m}}^{(II\mid\alpha,\beta)}(x) e^{-x} x^{s-1}\, dx = (-1)^{\left|\vec{n}\right|} \Gamma(s) (\alpha+1-s)_{m_1} (\beta+1-s)_{m_2},$$
(see Lemma \ref{LaguerreII_transf}) and of a beta density
$$ \int_0^{\infty} (1-x)^{n_2-1} x^{s+\nu-1} \chi_{[0,1]}(x) \,dx = B(s+\nu,n_2) = \frac{(n_2-1)!}{(s+\nu)_{n_2}}.$$
Hence we can use the multiplicative property of the Mellin convolution and the uniqueness of the Mellin transform to obtain the above.
\end{proof}

The structure of the mixed type function in \eqref{mixedI_conv} resembles the function that is used to generate the rational approximants in Apéry's proof of the irrationality of $\zeta(3)$. There one uses a Mellin convolution of a (multiple) Jacobi-Piñeiro polynomial and a beta density, see \cite{SmetVA}. Instead of a Jacobi-Piñeiro polynomial, we use a multiple Laguerre polynomial multiplied with a gamma density.
\medbreak

The convolution formula allows us to derive a Rodrigues formula for the mixed type function.

\begin{thm}
The mixed type function in \eqref{mixedI_conv} is given by the Rodrigues formula
    $$ F_{\vec{n},\vec{m}}(x) = \frac{(-1)^{\left|\vec{n}\right|}}{(\nu+1)_{n_2-1}} x^{-\alpha} \frac{d^{m_1}}{dx^{m_1}}\left[x^{m_1+\alpha-\beta}\frac{d^{m_2}}{dx^{m_2}}\left[x^{m_2+\beta} e^{-x} \frac{d^{n_2-1}}{dx^{n_2-1}}\left[x^{n_2-1} e^{x} E_{\nu+1}(x) \right] \right] \right].$$
\end{thm}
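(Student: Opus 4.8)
The plan is to run the same argument as in the proof of the type I Rodrigues formula \eqref{I_Rodr}, with the single Laguerre polynomial replaced by the type II multiple Laguerre polynomial of the first kind and \eqref{Laguerre_Rodr} replaced by its Rodrigues formula \eqref{LaguerreII_Rodr}. Starting from the convolution representation \eqref{mixedI_conv}, I would first perform the substitution $t\mapsto xt$ to bring it into the form
$$ F_{\vec{n},\vec{m}}(x) = \frac{(-1)^{\left|\vec{n}\right|}}{(n_2-1)!}\int_1^\infty L_{\vec{m}}^{(II\mid\alpha,\beta)}(xt)\,e^{-xt}\left(1-\frac1t\right)^{n_2-1}\frac{dt}{t^{\nu+1}}. $$
The crux is then to rewrite $L_{\vec{m}}^{(II\mid\alpha,\beta)}(xt)\,e^{-xt}$, viewed as a function of $x$ with $t$ held fixed, using \eqref{LaguerreII_Rodr} and the chain rule: replacing each $d/d(xt)$ by $t^{-1}d/dx$ produces powers of $t$ which, once absorbed into the monomials $x^{m_1+\alpha-\beta}$ and $x^{m_2+\beta}$ inside the nested derivatives, cancel exactly against the factor coming from $(xt)^{-\alpha}$ --- just as $L_{n+m-1}^{(\alpha)}(xt)e^{-xt}=\frac{x^{-\alpha}}{(n+m-1)!}\frac{d^{n+m-1}}{dx^{n+m-1}}[x^{n+m-1+\alpha}e^{-xt}]$ in the type I proof. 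The outcome is
$$ L_{\vec{m}}^{(II\mid\alpha,\beta)}(xt)\,e^{-xt} = (-1)^{\left|\vec{m}\right|}x^{-\alpha}\frac{d^{m_1}}{dx^{m_1}}\!\left[x^{m_1+\alpha-\beta}\frac{d^{m_2}}{dx^{m_2}}\!\left[x^{m_2+\beta}e^{-xt}\right]\right]. $$

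Since the integrand is smooth for $x\in\R$ and $t\geq1$, the Leibniz integral rule then lets me carry the differential operator $x^{-\alpha}\frac{d^{m_1}}{dx^{m_1}}[x^{m_1+\alpha-\beta}\frac{d^{m_2}}{dx^{m_2}}[x^{m_2+\beta}\,\cdot\,]]$ outside the integral, leaving only $\int_1^\infty e^{-xt}(1-1/t)^{n_2-1}t^{-\nu-1}\,dt$ under the integral sign. This last integral is precisely the one handled in the proof of \eqref{I_Rodr}: after $t\mapsto 1+t$ it equals $(n_2-1)!\,e^{-x}U(n_2,-\nu+1;x)$, and the special case $\frac{d^{n_2-1}}{dx^{n_2-1}}[x^{n_2-1}U(1,-\nu+1;x)]=(n_2-1)!(\nu+1)_{n_2-1}U(n_2,-\nu+1;x)$ of \cite[Eq.~13.3.24]{DLMF}, together with $U(1,-\nu+1;x)=e^x E_{\nu+1}(x)$, rewrites it as $\frac{e^{-x}}{(\nu+1)_{n_2-1}}\frac{d^{n_2-1}}{dx^{n_2-1}}[x^{n_2-1}e^x E_{\nu+1}(x)]$. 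Substituting this back into the integral and collecting the multiplicative constants (recall $\left|\vec{m}\right|=\left|\vec{n}\right|-1$) yields the claimed Rodrigues formula.

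The only step that needs genuine care is the chain-rule bookkeeping in the middle --- confirming that all powers of $t$ cancel so that a clean differential operator in $x$ really does survive --- but this is a routine computation entirely parallel to the type I case. Everything else is quotation of identities already established above, so I expect no further obstacles.
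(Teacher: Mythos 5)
Your proposal follows the paper's proof essentially verbatim: substitute $t\mapsto xt$, convert $L_{\vec{m}}^{(II\mid\alpha,\beta)}(xt)e^{-xt}$ into a differential operator in $x$ via the Rodrigues formula \eqref{LaguerreII_Rodr} (with all powers of $t$ cancelling, exactly as you check), pull the derivatives through the integral, and identify the remaining integral with $U(n_2,-\nu+1;x)$ as in the proof of \eqref{I_Rodr}. The one point where you and the paper diverge is the sign: your (correct) application of \eqref{LaguerreII_Rodr} to $L_{\vec{m}}^{(II\mid\alpha,\beta)}$ produces $(-1)^{\left|\vec{m}\right|}=(-1)^{\left|\vec{n}\right|-1}$, so the constants do not in fact ``collect'' to the stated prefactor $(-1)^{\left|\vec{n}\right|}$ --- they give its negative; the paper's own proof writes $(-1)^{\left|\vec{n}\right|}$ at that intermediate step, which is inconsistent with \eqref{LaguerreII_Rodr}, so the discrepancy appears to be a sign slip in the source rather than a flaw in your argument, but you should not assert agreement with the displayed constant without flagging it.
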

\begin{proof}
The proof will be similar to the proof of the Rodrigues formula \eqref{I_Rodr} for the type~I function; now we will use the Rodrigues formula for the underlying multiple Laguerre polynomial. We again start with the change of variables $t\mapsto xt$ in order to put the dependence on $x$ in the multiple Laguerre polynomial 
    $$ F_{\vec{n},\vec{m}}(x) = \int_1^\infty L_{\vec{m}}^{(II\mid\alpha,\beta)}(xt) e^{-xt} \left(1-\frac{1}{t}\right)^{n_2-1} \, \frac{dt}{t^{\nu+1}}. $$
The Rodrigues formula \eqref{Laguerre_Rodr} for $L_{\vec{m}}^{(II\mid\alpha,\beta)}(x)$ and the chain rule then imply that
    $$ L_{\vec{m}}^{(II\mid\alpha,\beta)}(xt) e^{-xt} = (-1)^{\left|\vec{n}\right|} (xt)^{-\alpha} \frac{d^{m_1}}{dx^{m_1}}\left[(xt)^{m_1+\alpha-\beta}\frac{d^{m_2}}{dx^{m_2}}\left[(xt)^{m_2+\beta} e^{-xt} \right] \cdot t^{-m_2} \right] \cdot t^{-m_1},$$
or after rearranging some factors depending on $t$,
$$ L_{\vec{m}}^{(II\mid\alpha,\beta)}(xt) e^{-xt} = (-1)^{\left|\vec{n}\right|} x^{-\alpha} \frac{d^{m_1}}{dx^{m_1}}\left[x^{m_1+\alpha-\beta}\frac{d^{m_2}}{dx^{m_2}}\left[x^{m_2+\beta} e^{-xt} \right] \right].$$
We can then take all the derivatives outside of the integral to get
    $$ F_{\vec{n},\vec{m}}(x) = (-1)^{\left|\vec{n}\right|} x^{-\alpha} \frac{d^{m_1}}{dx^{m_1}}\left[x^{m_1+\alpha-\beta}\frac{d^{m_2}}{dx^{m_2}}\left[x^{m_2+\beta} \int_1^\infty e^{-xt} \left(1-\frac{1}{t}\right)^{n_2-1} \, \frac{dt}{t^{\nu+1}} \right] \right].$$
The remaining integral can be written in the desired way using the same argument as in the proof of formula \eqref{I_Rodr}.
\end{proof}

\subsubsection{Hypergeometric series}

Similarly as with the type I function, we can derive a representation of the mixed type function in terms of certain generalized hypergeometric series.

\begin{prop}
Suppose that $\nu\not\in\Z$. The mixed type function with Mellin transform \eqref{mixedI_Mellin} arises as the following linear combination of ${}_3F_3$ hypergeometric series
$$\begin{aligned}
    F_{\vec{n},\vec{m}}(x) =\, & \mathcal{A}_{\vec{n},\vec{m}} \ {}_3F_3 \left( \begin{array}{c} m_1+\alpha+1,m_2+\beta+1,-n_2-\nu+1 \\ \alpha+1,\beta+1,-\nu+1 \end{array}; -x \right) \\
    &+ \mathcal{B}_{\vec{n},\vec{m}} \ {}_3F_3 \left( \begin{array}{c} -n_2+1,m_1+\alpha+\nu+1,m_2+\beta+\nu+1 \\ \nu+1,\alpha+\nu+1,\beta+\nu+1 \end{array}; -x \right) x^\nu,
\end{aligned}$$
where
    $$ \mathcal{A}_{\vec{n},\vec{m}} = \frac{(\alpha+1)_{m_1} (\beta+1)_{m_2}}{(\nu)_{n_2}},\quad \mathcal{B}_{\vec{n},\vec{m}} = \frac{(\alpha+\nu+1)_{m_1} (\beta+\nu+1)_{m_2}\Gamma(-\nu)}{(n_2-1)!}. $$
\end{prop}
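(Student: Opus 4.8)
The plan is to recover $F_{\vec{n},\vec{m}}$ from its Mellin transform \eqref{mixedI_Mellin} by an inverse Mellin transform, following exactly the residue-computation strategy used in the proof of Proposition \ref{I_2F2}. First I would pass to the complete mixed type function $x^\alpha F_{\vec{n},\vec{m}}(x)$ (equivalently, shift $s\mapsto s+\alpha$ where convenient), so that the relevant transform is
$$ \int_0^\infty F_{\vec{n},\vec{m}}(x) x^{\alpha+s-1}\,dx = \frac{\Gamma(s+\alpha)}{(s+\alpha+\nu)_{n_2}}(1-s)_{m_1}(\beta-\alpha+1-s)_{m_2}, $$
valid for $s>-\min\{\alpha,\alpha+\nu\}$. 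Using the analytic continuation of $\Gamma$, this extends meromorphically to $\C$ with simple poles only at $s=-\alpha-k$ ($k\in\Z_{\geq 0}$, from $\Gamma(s+\alpha)$) and at $s=-\alpha-\nu-k$ ($k=0,\dots,n_2-1$, from the factor $1/(s+\alpha+\nu)_{n_2}$); here $\nu\not\in\Z$ guarantees these two families are disjoint and all poles are simple. As in Proposition \ref{I_2F2}, all poles lie in a half-line $(-\infty,c)$ with $-\min\{\alpha,\alpha+\nu,0\}<c<1$ (using $\alpha,\beta,\alpha+\nu,\beta+\nu>-1$ to locate $c$ safely; note the zeros of $(\beta-\alpha+1-s)_{m_2}$ are harmless).

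Next I would invoke the inverse Mellin transform along $\mathrm{Re}\,s=c$ and close the contour to the left by a large rectangle, letting the width and height go to infinity; the decay of $\Gamma$ on vertical lines plus the polynomial growth of the Pochhammer factors makes the horizontal and far-left contributions vanish, so $x^\alpha F_{\vec{n},\vec{m}}(x)$ equals the sum of residues at all the poles listed above. The residue at $s=-\alpha-k$ is
$$ \frac{(-1)^k}{k!}\frac{(k+\alpha+1)_{m_1}(k+\alpha-\beta+1)_{m_2}}{(-k+\nu)_{n_2}}\,x^{\alpha+k}, $$
and after rewriting each Pochhammer via identities such as $(k+a)_m = (a)_m (a+m)_k/(a)_k$ and $(-k+\nu)_{n_2} = (\nu)_{n_2}(-\nu+1)_k/(-\nu-n_2+1)_k$ (the latter handled just as in Proposition \ref{I_2F2}), this collapses into the $k$-th term of $\mathcal{A}_{\vec{n},\vec{m}}\,{}_3F_3$ with numerator parameters $m_1+\alpha+1,\ m_2+\beta+1,\ -n_2-\nu+1$ and denominator parameters $\alpha+1,\ \beta+1,\ -\nu+1$. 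Similarly the residue at $s=-\alpha-\nu-k$ ($0\le k\le n_2-1$) produces $x^{\alpha+\nu+k}$ times a coefficient that, after the same kind of Pochhammer bookkeeping, becomes the $k$-th term of $\mathcal{B}_{\vec{n},\vec{m}}\,{}_3F_3$ with numerator parameters $-n_2+1,\ m_1+\alpha+\nu+1,\ m_2+\beta+\nu+1$ and denominator parameters $\nu+1,\ \alpha+\nu+1,\ \beta+\nu+1$; the factor $\Gamma(-\nu)$ in $\mathcal{B}_{\vec{n},\vec{m}}$ arises exactly as $\Gamma(-k-\nu)$ gets normalized, and the truncation of this series at $n_2-1$ matches the Pochhammer $(-n_2+1)_k$ vanishing for $k\ge n_2$. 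Dividing through by $x^\alpha$ gives the stated formula.

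The main obstacle is purely bookkeeping: correctly simplifying the two residue expressions into the advertised ${}_pF_q$-coefficients and verifying that the normalizing constants come out to be precisely $\mathcal{A}_{\vec{n},\vec{m}}=(\alpha+1)_{m_1}(\beta+1)_{m_2}/(\nu)_{n_2}$ and $\mathcal{B}_{\vec{n},\vec{m}}=(\alpha+\nu+1)_{m_1}(\beta+\nu+1)_{m_2}\Gamma(-\nu)/(n_2-1)!$. Since this computation is a direct generalization of the one already carried out in Proposition \ref{I_2F2} (with one extra Pochhammer factor in the numerator coming from the second weight $x^\beta$), I would compress it to a sentence or two referencing that proof. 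One should also note, as a consistency check echoing the remark after Proposition \ref{I_2F2}, that the $x^\nu$-part is forced by the expansion $E_{\nu+1}(x)=\Gamma(-\nu)x^\nu + \tfrac1\nu\,{}_1F_1(-\nu;-\nu+1;-x)$, of which $F_{\vec{n},\vec{m}}$ is a polynomial combination together with $e^{-x}$.
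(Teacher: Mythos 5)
Your proposal follows essentially the same route as the paper: invert the Mellin transform of $x^\alpha F_{\vec{n},\vec{m}}(x)$ along a vertical line, close the contour to the left, and sum the residues at the two families of simple poles $s=-\alpha-k$ ($k\geq 0$) and $s=-\alpha-\nu-k$ ($0\leq k\leq n_2-1$), then reorganize the Pochhammer symbols into the two ${}_3F_3$ series exactly as in Theorem \ref{I_2F2}. One small slip: the residue at $s=-\alpha-k$ should carry the factor $(k+\beta+1)_{m_2}$, since $(\beta-\alpha+1-s)_{m_2}$ evaluated at $s=-\alpha-k$ gives $(\beta+1+k)_{m_2}$, not $(k+\alpha-\beta+1)_{m_2}$; with that corrected, your bookkeeping is consistent with the stated parameters and constants.
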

\begin{proof}
We will use the same strategy as in the proof of Theorem \ref{I_2F2} and take a closer look at the Mellin transform of $x^\alpha F_{\vec{n},\vec{m}}(x)$, which is
$$\int_0^\infty F_{\vec{n},\vec{m}}(x) x^{\alpha+s-1} dx = \frac{\Gamma(s+\alpha)}{(s+\alpha+\nu)_{n_2}} (1-s)_{m_1} (\beta-\alpha+1-s)_{m_2},\quad s>-\min\{\alpha,\alpha+\nu\},$$
(starting from $x^{\beta} F_{\vec{n},\vec{m}}(x)$ will lead to the same result). We can extend this function to a function that is analytic in the complex plane except for the simple poles $-\alpha,-\alpha-1,\dots$ and $-\alpha-\nu,\dots,-\alpha-\nu-n_2+1$. All these poles lie in an interval $(-\infty,c)$ for some $c\in\R$ with $-\min\{\alpha,\alpha+\nu,0\}<c<1$. The original function can then be recovered by evaluating the inverse Mellin transform via the residue theorem
$$\begin{aligned}
    x^{\alpha} F_{\vec{n},\vec{m}}(x) &= \int_{c-i\infty}^{c+\infty} \frac{\Gamma(s+\alpha)}{(s+\alpha+\nu)_{n_2}} (1-s)_{m_1} (\beta-\alpha+1-s)_{m_2} x^{-s} ds \\
    &= \sum_{k=0}^{\infty} \frac{(-1)^k}{k!} \frac{(k+\alpha+1)_{m_1} (k+\beta+1)_{m_2}}{(-k+\nu)_{n_2}} x^{\alpha+k} \\
    & \quad + \sum_{k=0}^{n_2-1} \frac{\Gamma(-k-\nu)}{(-k)_k (1)_{n_2-k}} (k+\alpha+\nu+1)_{m_1} (k+\beta+\nu+1)_{m_2} x^{\alpha+\nu+k}.
\end{aligned}$$
After some elementary computations, we obtain the stated result.  
\end{proof}

The two terms in the above expression are again solutions of the same hypergeometric differential equation (now of the fourth order). The two other solutions are
    $$ x^{-\alpha} {}_3F_3 \left( \begin{array}{c} -n_2-\alpha-\nu+1,m_1+1,m_2+\beta-\alpha+1 \\ -\alpha-\nu+1,-\alpha+1,\beta-\alpha+1 \end{array}; -x \right), $$
    $$ x^{-\beta} {}_3F_3 \left( \begin{array}{c} -n_2-\alpha-\nu+1,m_1+\alpha-\beta+1,m_2+1\\ -\beta-\nu+1,\alpha-\beta+1,-\beta+1 \end{array}; -x \right). $$
Using the same argument as in Proposition \ref{IB_2F2}, we then must have
$$B_{\vec{n},\vec{m}}(x) = B_{\vec{n},\vec{m}}(0) \cdot {}_3F_3 \left( \begin{array}{c} -n_2+1,m_1+\alpha+\nu+1,m_2+\beta+\nu+1 \\ \nu+1,\alpha+\nu+1,\beta+\nu+1 \end{array}; -x \right).$$

\subsection{Second family}
In this section, we will investigate the mixed type multiple orthogonal polynomials with respect to the two pairs of weights $(x^{\alpha},x^{\beta})$ and $(e^{-x},E_{\nu+1}(x))$ on $[0,\infty)$. By reversing the order of the weights, we end up in a setting that is dual to the one before. Now, we are looking for polynomials $C_{\vec{n},\vec{m}}$ and $D_{\vec{n},\vec{m}}$ with $\deg C_{\vec{n},\vec{m}}\leq n_1-1$ and $\deg D_{\vec{n},\vec{m}}\leq n_2-1$, for which the function
\begin{equation} \label{defG_MT}
    G_{\vec{n},\vec{m}}(x) = C_{\vec{n},\vec{m}}(x) x^{\alpha} + D_{\vec{n},\vec{m}}(x) x^{\beta}
\end{equation}
satisfies the orthogonality conditions
$$\begin{aligned}
     \int_0^\infty G_{\vec{n},\vec{m}}(x) x^{k} e^{-x} \, dx &= 0, \qquad k=0,\dots,m_1-1, \\
     \int_0^\infty G_{\vec{n},\vec{m}}(x) x^{k} E_{\nu+1}(x) \, dx &= 0, \qquad k=0,\dots,m_2-1.
\end{aligned}$$   
\medbreak
The (degenerate) case $n_2=0$ corresponds to the setting of the type II polynomials of before. This will again lead to some similar properties. The (degenerate) case $m_2=0$ corresponds to the setting of the type I multiple Laguerre polynomials of the first kind. The associated type I functions will be denoted by $L_{n,m}^{(I\mid\alpha,\beta)}(x)$ and are of the form
\begin{equation} \label{defLI}
    L_{n,m}^{(I\mid\alpha,\beta)}(x) = C_{n,m}(x) x^{\alpha} + D_{n,m}(x) x^{\beta},
\end{equation}
where $C_{n,m}$ and $D_{n,m}$ are polynomials with $\deg C_{n,m}=n-1$ and $\deg D_{n,m}=m-1$. They satisfy the orthogonality conditions
     $$\int_0^\infty L_{n,m}^{(I,\alpha,\beta)}(x) x^{k} e^{-x} \, dx = 0, \quad k=0,\dots,n+m-2 , $$
and are normalized such that
    $$\int_0^\infty L_{n,m}^{(I,\alpha,\beta)}(x) x^{n+m-1} e^{-x} \, dx = 1.$$
We don't believe any explicit formulas for the type I multiple Laguerre polynomials of the first kind have been written down before, so we will handle that case first.

In the derivation of the mixed type polynomials, we will assume that the multi-index $\vec{m}$ satisfies $m_1+1\geq m_2$. This is the natural extension of the restriction we had before.

\subsubsection{Type I multiple Laguerre polynomials of the first kind}

\begin{prop}
The polynomials $C_{n,m}(x)$ and $D_{n,m}(x)$ in \eqref{defLI} are given by
\begin{align}
    C_{n+1,m+1}(x)= \frac{1}{Z_{n,m}} \sum_{k=0}^{n} \binom{n+\alpha-\beta}{n-k}\binom{m-\alpha+\beta}{k} \frac{x^k}{\Gamma(k+\alpha+1)}, \label{MLaguerreI_1} \\
    D_{n+1,m+1}(x)= - \frac{1}{Z_{n,m}} \sum_{k=0}^{m} \binom{m-\alpha+\beta}{m-k}\binom{n+\alpha-\beta}{k} \frac{x^k}{\Gamma(k+\beta+1)}, \label{MLaguerreI_2}
\end{align}
where $Z_{n,m}=(-1)^{n+m}(\alpha-\beta)(\alpha-\beta+1)_n(\beta-\alpha+1)_m$ is a normalizing constant. In particular, $D_{n,m}^{\alpha,\beta}(x) = C_{m,n}^{\beta,\alpha}(x)$.
\end{prop}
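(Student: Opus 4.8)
\textit{Proof strategy.} The plan is to reduce everything to a uniqueness statement and a direct verification. Since $\alpha-\beta\notin\Z$, the pair $(x^\alpha,x^\beta)$ is an AT-system on $(0,\infty)$ (as recalled in the introduction); multiplying both weights by the nowhere-vanishing factor $e^{-x}$ does not change the number of zeros of any linear form, so the pair of type I weights $(x^\alpha e^{-x},x^\beta e^{-x})$ is an AT-system as well. Hence the system is perfect, $L_{n,m}^{(I\mid\alpha,\beta)}$ attains its maximal degrees, and it is uniquely determined by its orthogonality conditions together with the normalization. It therefore suffices to show that, for the multi-index $(n+1,m+1)$, the vector $(C_{n+1,m+1},D_{n+1,m+1})$ of \eqref{MLaguerreI_1}--\eqref{MLaguerreI_2} has $\deg C_{n+1,m+1}=n$, $\deg D_{n+1,m+1}=m$, and that $L(x):=C_{n+1,m+1}(x)x^\alpha+D_{n+1,m+1}(x)x^\beta$ satisfies $\int_0^\infty L(x)x^je^{-x}\,dx=0$ for $j=0,\dots,n+m$ and $\int_0^\infty L(x)x^{n+m+1}e^{-x}\,dx=1$. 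The concluding identity $D_{n,m}^{\alpha,\beta}=C_{m,n}^{\beta,\alpha}$ is then immediate: the two sums in \eqref{MLaguerreI_1} and \eqref{MLaguerreI_2} coincide term by term under $(n,m,\alpha,\beta)\mapsto(m,n,\beta,\alpha)$, and $Z_{m-1,n-1}^{\beta,\alpha}=-Z_{n-1,m-1}^{\alpha,\beta}$ absorbs the sign.

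The degree claims follow by inspection: the leading coefficients are $\binom{m-\alpha+\beta}{n}/(Z_{n,m}\Gamma(n+\alpha+1))$ and $-\binom{n+\alpha-\beta}{m}/(Z_{n,m}\Gamma(m+\beta+1))$, both nonzero because $\alpha-\beta\notin\Z$ keeps $m-\alpha+\beta$ and $n+\alpha-\beta$ out of $\Z_{\geq0}$. For the rest I would work with the Laplace transform $\mathcal{L}[L](p)=\int_0^\infty L(x)e^{-px}\,dx$, whose Taylor coefficient at $p=1$ in degree $j$ is $\tfrac{(-1)^j}{j!}\int_0^\infty L(x)x^je^{-x}\,dx$; the two required properties say exactly that $\mathcal{L}[L]$ vanishes to order $n+m+1$ at $p=1$ with $(n+m+1)$-st Taylor coefficient equal to $\tfrac{(-1)^{n+m+1}}{(n+m+1)!}$. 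Using $\mathcal{L}[x^c/\Gamma(c+1)](p)=p^{-c-1}$ and the Chu--Vandermonde identity $\sum_k\binom{A}{n-k}\binom{B}{k}t^k=[w^n](1+w)^A(1+tw)^B$, with $A=n+\alpha-\beta$, $B=m-\alpha+\beta$ and $A+B=n+m$, one computes
\[ Z_{n,m}\,\mathcal{L}[L](p)=p^{-(n+\alpha+1)}[w^n](1+pw)^A(1+w)^B-p^{-(m+\beta+1)}[w^m](1+pw)^B(1+w)^A. \]

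The heart is then a short generating-function manipulation. Substituting $w\mapsto w/p$ in each term and using $\alpha+1+B=m+\beta+1$ and $\beta+1+A=n+\alpha+1$ turns the right-hand side into $p^{-(m+\beta+1)}[w^n](1+w)^A(p+w)^B-p^{-(n+\alpha+1)}[w^m](1+w)^B(p+w)^A$. Writing $p=1+t$, pulling out the unit $(1+t)^{-(m+\beta+1)}$ and using $(m+\beta+1)-(n+\alpha+1)=m-A$, the order-$(n+m+1)$ vanishing becomes the assertion
\[ [w^n](1+w)^A(1+w+t)^B=(1+t)^{m-A}\,[w^m](1+w)^B(1+w+t)^A+O(t^{n+m+1}). \]
Here $[w^n](1+w)^A(1+w+t)^B=\sum_{b\geq0}\binom{B}{b}\binom{n+m-b}{n}t^b$ (expand $(1+w+t)^B$ in $t$ and use $A+B=n+m$), and the analogous computation for the other term shows that both sides reduce, modulo $t^{n+m+1}$, to $(1+t)^{m-A}\sum_{r=0}^n\binom{A}{r}\binom{n+m-r}{m}t^r$ --- the key point being that $\binom{n+m-b}{n}=0$ for $m<b\leq n+m$ and $\binom{n+m-b}{m}=0$ for $n<b\leq n+m$, so the discrepancy only starts at $t^{n+m+1}$. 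Tracking that discrepancy one finds the $t^{n+m+1}$-coefficient of $Z_{n,m}\,\mathcal{L}[L](1+t)$ equals $(-1)^{m+1}\binom{A}{n+m+1}$; expanding $\binom{A}{n+m+1}(n+m+1)!=\prod_{i=0}^{n+m}(A-i)=(\alpha-\beta)(\alpha-\beta+1)_n\,(-1)^m(\beta-\alpha+1)_m$ shows this agrees with $(-1)^{n+m+1}Z_{n,m}/(n+m+1)!$ exactly when $Z_{n,m}=(-1)^{n+m}(\alpha-\beta)(\alpha-\beta+1)_n(\beta-\alpha+1)_m$, the stated normalizing constant. I expect the only place needing genuine care is this final Pochhammer-and-sign bookkeeping, together with correctly tracking which binomial coefficients vanish; there is no analytic difficulty.
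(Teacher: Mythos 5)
Your route is genuinely different from the paper's: the paper proves these formulas by induction on $n+m$, subtracting the two nearest neighbour recurrence relations to get $(d_{n,m-1}-c_{n-1,m})L_{n,m}^{(I\mid\alpha,\beta)}=L_{n-1,m}^{(I\mid\alpha,\beta)}-L_{n,m-1}^{(I\mid\alpha,\beta)}$, importing the known coefficients $c_{n,m}=2n+m+\alpha+1$, $d_{n,m}=n+2m+\beta+1$, and checking the claimed coefficients satisfy the resulting relation. Your plan --- verify degrees, orthogonality and normalization directly and invoke uniqueness from perfectness --- is legitimate and self-contained, and almost everything you write checks out: the Laplace-transform reformulation, the two coefficient-extraction expressions, the exponent bookkeeping $\alpha+1+B=m+\beta+1$, $\beta+1+A=n+\alpha+1$, the leading discrepancy $(-1)^{m+1}\binom{A}{n+m+1}$, the Pochhammer computation recovering $Z_{n,m}$, and the symmetry argument for $D_{n,m}^{\alpha,\beta}=C_{m,n}^{\beta,\alpha}$ are all correct.

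There is, however, one genuine gap, and it sits exactly where the orthogonality lives. You need
\[
\sum_{b=0}^{m}\binom{B}{b}\binom{n+m-b}{n}\,t^{b}\;\equiv\;(1+t)^{m-A}\sum_{a=0}^{n}\binom{A}{a}\binom{n+m-a}{m}\,t^{a}\pmod{t^{n+m+1}},
\]
but the justification you offer --- that $\binom{n+m-b}{n}=0$ for $m<b\le n+m$ and $\binom{n+m-a}{m}=0$ for $n<a\le n+m$ --- only shows that each of the two infinite series is congruent to its own truncation; it does not address why the two different-looking truncated expressions agree. That congruence is a nontrivial Vandermonde-type identity and cannot be waved through. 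It is true, and your framework admits a clean fix: substitute $w=(1+t)u$ in $[w^{n}](1+w)^{A}(1+w+t)^{B}$ to rewrite it as $(1+t)^{m-A}[u^{n}](1+u)^{B}\bigl(1+u(1+t)\bigr)^{A}$ (using $B-n=m-A$); then the difference of the two bracketed terms, after dividing out $(1+t)^{m-A}$, has $t^{a}$-coefficient $\binom{A}{a}\bigl([u^{n-a}](1+u)^{n+m-a}-\binom{n+m-a}{m}\bigr)=\binom{A}{a}\bigl(\binom{n+m-a}{n-a}-\binom{n+m-a}{m}\bigr)$, which vanishes for $0\le a\le n$ by the symmetry of the binomial coefficient, vanishes trivially for $n<a\le n+m$, and equals $(-1)^{m+1}\binom{A}{n+m+1}$ at $a=n+m+1$. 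Inserting this lemma makes your argument complete and confirms the leading coefficient you claimed; without it, the proof of the orthogonality relations is missing its central step.
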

\begin{proof}
We will prove this inductively on the sum $n+m$. For $n+m$ equal to $0$ or $1$, we can verify the formula by hand or by means of a computer algebra system. Now, let $(n,m)$ be a multi-index with $n+m\geq 1$ and suppose that the formulas \eqref{MLaguerreI_1}--\eqref{MLaguerreI_2} hold for all multi-indices that add up to a smaller amount. We will prove that they are also valid for $(n,m)$ by making use of the two nearest neighbor recurrence relations

$$ xL_{n,m}^{(I\mid\alpha,\beta)}(x) = L_{n-1,m}^{(I\mid\alpha,\beta)}(x)  + c_{n-1,m} L_{n,m}^{(I\mid\alpha,\beta)}(x) + a_{n,m} L_{n+1,m}^{(I\mid\alpha,\beta)}(x) + b_{n,m}L_{n,m+1}^{(I\mid\alpha,\beta)}(x), $$
$$ xL_{n,m}^{(I\mid\alpha,\beta)}(x) = L_{n,m-1}^{(I\mid\alpha,\beta)}(x) + d_{n,m-1} L_{n,m}^{(I\mid\alpha,\beta)}(x) +  a_{n,m} L_{n+1,m}^{(I\mid\alpha,\beta)}(x) + b_{n,m}L_{n,m+1}^{(I\mid\alpha,\beta)}(x). $$
If we subtract them, we find another (simpler) recurrence relation
    $$ (d_{n,m-1}-c_{n-1,m}) L_{n,m}^{(I\mid\alpha,\beta)}(x) = L_{n-1,m}^{(I\mid\alpha,\beta)}(x) - L_{n,m-1}^{(I\mid\alpha,\beta)}(x).$$
We know that the corresponding type II multiple orthogonal polynomials $L_{n,m}^{(II\mid\alpha,\beta)}$, which are multiple Laguerre polynomials of the first kind, satisfy nearest neighbor recurrence relations with similar coefficients
$$ xL_{n,m}^{(II\mid\alpha,\beta)}(x) = L_{n+1,m}^{(II\mid\alpha,\beta)}(x)  + c_{n,m} L_{n,m}^{(II\mid\alpha,\beta)}(x) + a_{n,m} L_{n-1,m}^{(II\mid\alpha,\beta)}(x) + b_{n,m} L_{n,m-1}^{(II\mid\alpha,\beta)}(x), $$
$$ xL_{n,m}^{(II\mid\alpha,\beta)}(x) = L_{n,m-1}^{(II\mid\alpha,\beta)}(x) + d_{n,m} L_{n,m}^{(II\mid\alpha,\beta)}(x) +  a_{n,m} L_{n-1,m}^{(II\mid\alpha,\beta)}(x) + b_{n,m} L_{n,m-1}^{(II\mid\alpha,\beta)}(x). $$
The coefficients in the nearest neighbor recurrence relations for the multiple Laguerre polynomials of the first kind were determined in \cite{WVA}. In particular,
    $$c_{n,m} = 2n+m+\alpha+1,\quad d_{n,m} = n+2m+\beta+1. $$
Let us denote the coefficient of the monomial $x^k$ in a polynomial $P(x)$ by $P[k]$. If we compare the coefficient of $x^k$ in the $C$-polynomial on each side of the equation
$$ (-n+m-\alpha+\beta) L_{n,m}^{(I\mid\alpha,\beta)}(x) = L_{n-1,m}^{(I\mid\alpha,\beta)}(x) - L_{n,m-1}^{(I\mid\alpha,\beta)}(x),$$
we get
$$ (-n+m-\alpha+\beta) C_{n,m}[k] = C_{n-1,m}[k] - C_{n,m-1}[k]. $$
We may compute this difference via the induction hypothesis; it is equal to
\begin{align*}
& \frac{1}{Z_{n-1,m}} \binom{n-1+\alpha-\beta}{n-1-k}\binom{m-\alpha+\beta}{k}  -\frac{1}{Z_{n,m-1}} \binom{n+\alpha-\beta}{n-k}\binom{m-1-\alpha+\beta}{k}  \\
& = \frac{1}{Z_{n,m}} \binom{n+\alpha-\beta}{n-k}\binom{m-\alpha+\beta}{k} \left( \frac{Z_{n,m}}{Z_{n-1,m}}\frac{n-k}{n+\alpha-\beta} - \frac{Z_{n,m}}{Z_{n,m-1}} \frac{m-k-\alpha+\beta}{m-\alpha+\beta}\right) \\
& = \frac{1}{Z_{n,m}} \binom{n+\alpha-\beta}{n-k}\binom{m-\alpha+\beta}{k} (-n+m-\alpha+\beta).
\end{align*}
This is precisely what we wanted to show for $C_{n,m}[k]$. We have therefore proven \eqref{MLaguerreI_1}. The formula for $D_{n,m}(x)$ in \eqref{MLaguerreI_2} then follows from the symmetry of the problem.
\end{proof}

Some elementary computations show that the type I Laguerre polynomials of the first kind are the following hypergeometric polynomials
    $$ C_{n,m}(x) = \frac{(-1)^{n+m+1}}{\Gamma(\alpha+1) (\beta-\alpha)_{m} (n-1)!} {}_2F_2 \left( \begin{array}{c} -n+1,-m+\alpha-\beta+1 \\ \alpha+1,\alpha-\beta+1 \end{array}; x \right), $$
    $$ D_{n,m}(x) = \frac{(-1)^{n+m+1}}{\Gamma(\beta+1) (\alpha-\beta)_{n} (m-1)!} {}_2F_2 \left( \begin{array}{c} -m+1,-n-\alpha+\beta+1 \\ \beta+1,-\alpha+\beta+1 \end{array}; x \right). $$

\subsubsection{Differentiation formula}

We will be able to determine an explicit expression for the mixed type polynomials by making use of the lemma below, which is the analogue of the reduction property in Lemma \ref{II_red}. In this lemma, we explicitly wrote down the dependence of $G_{\vec{n},\vec{m}}=G_{\vec{n},\vec{m}}^{\alpha,\beta,\nu}$ on the underlying parameters.

\begin{lem}
One has,
    $$ \frac{d}{dx}\left[ x^{\nu} G_{\vec{n},\vec{m}}^{\alpha,\beta,\nu}(x) \right] \propto x^{\nu-1} G_{\vec{n},(m_1-1,m_2+1)}^{\alpha,\beta,\nu-1}(x) . $$
\end{lem}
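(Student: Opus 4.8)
The plan is to adapt the argument for Lemma~\ref{II_red}. Write $x^\nu G_{\vec n,\vec m}^{\alpha,\beta,\nu}(x) = C_{\vec n,\vec m}(x)\,x^{\alpha+\nu} + D_{\vec n,\vec m}(x)\,x^{\beta+\nu}$ and differentiate:
$$ \frac{d}{dx}\Bigl[x^\nu G_{\vec n,\vec m}^{\alpha,\beta,\nu}(x)\Bigr] = x^{\nu-1}\Bigl(\widetilde C(x)\,x^\alpha + \widetilde D(x)\,x^\beta\Bigr),\qquad \widetilde C = x C_{\vec n,\vec m}' + (\alpha+\nu)C_{\vec n,\vec m},\ \ \widetilde D = x D_{\vec n,\vec m}' + (\beta+\nu)D_{\vec n,\vec m}. $$
Here $\deg\widetilde C\le n_1-1$ and $\deg\widetilde D\le n_2-1$, so $\widetilde G:=\widetilde C\,x^\alpha + \widetilde D\,x^\beta$ has exactly the shape of a mixed type function for the multi-indices $\vec n,(m_1-1,m_2+1)$ and parameters $(\alpha,\beta,\nu-1)$. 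One checks easily that $\widetilde G\not\equiv0$ (otherwise $x^\nu G_{\vec n,\vec m}^{\alpha,\beta,\nu}$ would reduce to a nonzero constant, which $\alpha-\beta\not\in\Z$ together with the orthogonality of $G_{\vec n,\vec m}^{\alpha,\beta,\nu}$ rules out), so by uniqueness of the mixed type function it suffices to verify that $\widetilde G$ satisfies the defining orthogonality conditions, namely $\int_0^\infty\widetilde G(x)\,x^k e^{-x}\,dx=0$ for $k=0,\dots,m_1-2$ and $\int_0^\infty\widetilde G(x)\,x^k E_\nu(x)\,dx=0$ for $k=0,\dots,m_2$. Throughout I use $\widetilde G(x)=x^{1-\nu}\tfrac{d}{dx}\bigl[x^\nu G_{\vec n,\vec m}^{\alpha,\beta,\nu}(x)\bigr]$.

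For the conditions against $e^{-x}$, integration by parts gives
$$ \int_0^\infty\widetilde G(x)\,x^k e^{-x}\,dx = \int_0^\infty\frac{d}{dx}\bigl[x^\nu G_{\vec n,\vec m}^{\alpha,\beta,\nu}(x)\bigr]x^{k-\nu+1}e^{-x}\,dx = -\int_0^\infty G_{\vec n,\vec m}^{\alpha,\beta,\nu}(x)\bigl((k{+}1{-}\nu)-x\bigr)x^k e^{-x}\,dx, $$
the boundary term vanishing since $\alpha,\beta>-1$. The right-hand side is a combination of $\int_0^\infty G\,x^k e^{-x}$ and $\int_0^\infty G\,x^{k+1}e^{-x}$, both of which vanish for $k\le m_1-2$ by the orthogonality of $G_{\vec n,\vec m}^{\alpha,\beta,\nu}$ with respect to $e^{-x}$.

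For the conditions against $E_\nu$, I use $E_\nu(x)=x^{\nu-1}\Gamma_{-\nu+1}(x)$ and $\Gamma_{-\nu+1}'(x)=-x^{-\nu}e^{-x}$, so that $x^{k-\nu+1}E_\nu(x)=x^k\Gamma_{-\nu+1}(x)$ and integration by parts yields
$$ \int_0^\infty\widetilde G(x)\,x^k E_\nu(x)\,dx = -k\int_0^\infty G_{\vec n,\vec m}^{\alpha,\beta,\nu}(x)\,x^k E_\nu(x)\,dx + \int_0^\infty G_{\vec n,\vec m}^{\alpha,\beta,\nu}(x)\,x^k e^{-x}\,dx, $$
again with vanishing boundary term. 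For $k=0$ the first term drops and the second is $\int_0^\infty G_{\vec n,\vec m}^{\alpha,\beta,\nu}(x)e^{-x}\,dx=0$. For $k\ge1$, the recurrence $\nu E_{\nu+1}(x)+xE_\nu(x)=e^{-x}$ (see \cite[Eq.~8.19.12]{DLMF}) rewrites $\int_0^\infty G\,x^k E_\nu$ as $\int_0^\infty G\,x^{k-1}e^{-x} - \nu\int_0^\infty G\,x^{k-1}E_{\nu+1}$, which vanishes for $k\le\min\{m_1,m_2\}$; combined with $\int_0^\infty G\,x^k e^{-x}=0$ for $k\le m_1-1$, this gives the required vanishing for $1\le k\le\min\{m_1-1,m_2\}$. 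In the relevant range of multi-indices — namely $m_1\ge m_2+1$, which is exactly the condition under which $(m_1-1,m_2+1)$ is again admissible and the form in which the lemma is applied — this covers all of $k=0,\dots,m_2$, so $\widetilde G$ is proportional to $G_{\vec n,(m_1-1,m_2+1)}^{\alpha,\beta,\nu-1}$ and the proof is complete.

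The main obstacle is the last piece of bookkeeping: one must confirm that differentiation sends the orthogonality relations for $(\vec m;\nu)$ precisely onto those for $((m_1-1,m_2+1);\nu-1)$, which forces one to trade a single $e^{-x}$-orthogonality for two $E_{\nu+1}$-orthogonalities via $\nu E_{\nu+1}+xE_\nu=e^{-x}$, and this is exactly the step that needs $m_1\ge m_2+1$. The integration-by-parts boundary terms at $0$ and $\infty$ likewise rely on the standing parameter assumptions $\alpha,\beta>-1$ and $\alpha+\nu,\beta+\nu>-1$, but verifying them is routine.
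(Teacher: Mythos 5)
Your proof is correct and follows essentially the same route as the paper's: the same decomposition of $\frac{d}{dx}[x^\nu G_{\vec n,\vec m}^{\alpha,\beta,\nu}]$, the same integration by parts (with the boundary terms killed by $\alpha,\beta>-1$), and the same use of $\nu E_{\nu+1}(x)+xE_\nu(x)=e^{-x}$ to trade one $e^{-x}$-orthogonality for the extra $E_{\nu+1}$-orthogonality, which is exactly where $m_1\ge m_2+1$ enters in the paper as well. The only cosmetic difference is that you keep the intermediate expression in terms of $E_\nu$ before converting, and you add an explicit non-vanishing remark that the paper leaves implicit.
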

\begin{proof}
We will first verify that the left hand side has the right structure; it is given by
    $$ ((\alpha+\nu) C_{\vec{n},\vec{m}}(x) + x C_{\vec{n},\vec{m}}'(x)) x^{\alpha+\nu-1}
    + ((\beta+\nu) D_{\vec{n},\vec{m}}(x) + x D_{\vec{n},\vec{m}}'(x)) x^{\beta+\nu-1}, $$
where the polynomials between the brackets are still of degree $n_1$ and $n_2$ respectively. It then remains to show that the required orthogonality conditions are satisfied, i.e.,
    $$ \int_0^\infty \frac{d}{dx}\left[ x^{\nu} G_{\vec{n},\vec{m}}^{\alpha,\beta,\nu}(x) \right] e^{-x} x^{k-\nu+1}\, dx = 0,\quad k=0,\dots,m_1-2,  $$
    $$ \int_0^\infty \frac{d}{dx}\left[ x^{\nu} G_{\vec{n},\vec{m}}^{\alpha,\beta,\nu}(x) \right] E_{\nu}(x) x^{k-\nu+1}\, dx = 0,\quad k=0,\dots,m_2. $$
We can work out both integrals via integration by parts. The first one becomes
$$ -\int_0^\infty G_{\vec{n},\vec{m}}^{\alpha,\beta,\nu}(x) e^{-x} x^{k} (-x+(k-\nu+1))\, dx, $$
and indeed vanishes for $k=0,\dots,m_1-2$. Using the same argument as in the proof of Lemma \ref{II_red}, the second integral becomes
$$ \int_0^\infty G_{\vec{n},\vec{m}}^{\alpha,\beta,\nu}(x) x^{k-1} ((x-k)e^{-x} 
 + k \nu E_{\nu+1}(x)) \, dx.$$
For $k=0$, this is
    $$ \int_0^\infty G_{\vec{n},\vec{m}}(x) e^{-x}\, dx = 0, $$
while for $k>0$, the integral vanishes if $k\leq m_1-1$ and $k\leq m_2$. We thus also obtain the desired orthogonality conditions with respect to the second weight, since $m_1\geq m_2+1$ by assumption.
\end{proof}

We can now express the mixed type function in terms of the type I function of the type~I multiple Laguerre polynomials of the first kind.

\begin{thm}
There is a mixed type function $G_{\vec{n},\vec{m}}$ given by
\begin{equation} \label{mixedII_form}
    G_{\vec{n},\vec{m}}(x) = x^{-\nu}\frac{d^{m_2}}{dx^{m_2}}\left[ x^{m_2+\nu} L_{\vec{n}}^{(I\mid\alpha,\beta)}(x) \right].
\end{equation}
\end{thm}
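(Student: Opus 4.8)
The plan is to follow the same route as for the differentiation formula \eqref{II_diff} in the type II case: start from an object whose second index is $0$, and then apply the reduction property of the preceding lemma $m_2$ times to build up the function whose second index is $m_2$.

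First I would observe that $L_{\vec{n}}^{(I\mid\alpha,\beta)}$ is itself a mixed type function of the required form for the index pair $\bigl(\vec{n},(m_1+m_2,0)\bigr)$. Indeed it has the shape \eqref{defLI} with $\deg C_{n_1,n_2}=n_1-1$ and $\deg D_{n_1,n_2}=n_2-1$, and since $|\vec{m}|=|\vec{n}|-1$ its defining conditions $\int_0^\infty L_{\vec{n}}^{(I\mid\alpha,\beta)}(x)\,x^k e^{-x}\,dx=0$ for $k=0,\dots,n_1+n_2-2$ are exactly the orthogonality conditions with second index $(m_1+m_2,0)$ (the conditions against $E_{\nu+1}$ being vacuous); the value of the parameter is irrelevant at this stage. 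Next I would apply the preceding lemma repeatedly. Reading it as the statement that the operator $f\mapsto x^{1-c}\frac{d}{dx}\bigl[x^{c}f\bigr]$ carries a mixed type function with parameter $c$ and index $(\tilde m_1,\tilde m_2)$ to one with parameter $c-1$ and index $(\tilde m_1-1,\tilde m_2+1)$, I would start with parameter $\nu+m_2$ and index $(m_1+m_2,0)$ and iterate $m_2$ times; the index then runs through $(m_1+m_2-1,1),(m_1+m_2-2,2),\dots$ and ends at $(m_1,m_2)$, while the parameter decreases to $\nu$. At step $j$ the lemma is invoked for the index $(m_1+m_2-j,j)$ with $0\le j\le m_2-1$, and the requirement it needs --- that the first component exceed the second by at least one --- holds precisely because of the standing assumption $m_1+1\ge m_2$, the worst case being $j=m_2-1$. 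The outcome is a mixed type function for the pair $\bigl(\vec{n},(m_1,m_2)\bigr)$ with parameter $\nu$, equal up to a non-zero constant to the composition of these $m_2$ first-order operators applied to $L_{\vec{n}}^{(I\mid\alpha,\beta)}$.

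Finally I would collapse that composition. Writing $A_c f:=x^{-c}\frac{d}{dx}\bigl[x^{c+1}f\bigr]$, the iteration produces $A_\nu\circ A_{\nu+1}\circ\cdots\circ A_{\nu+m_2-1}$, and since at each junction the two neighbouring stages contribute the cancelling powers $x^{c+1}\cdot x^{-(c+1)}=1$, an easy induction gives
\[
   A_\nu\circ A_{\nu+1}\circ\cdots\circ A_{\nu+m_2-1}(f)=x^{-\nu}\frac{d^{m_2}}{dx^{m_2}}\bigl[x^{m_2+\nu}f\bigr].
\]
With $f=L_{\vec{n}}^{(I\mid\alpha,\beta)}$ this is exactly \eqref{mixedII_form}, so the right-hand side of \eqref{mixedII_form} is a non-zero constant multiple of a mixed type function and hence is one itself. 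As a cross-check one can also expand $x^{-\nu}\frac{d^{m_2}}{dx^{m_2}}\bigl[x^{m_2+\nu}L_{\vec{n}}^{(I\mid\alpha,\beta)}(x)\bigr]$ directly: differentiating term by term it is $x^\alpha$ times a polynomial of degree at most $n_1-1$ plus $x^\beta$ times a polynomial of degree at most $n_2-1$, the top-degree coefficients surviving because $\alpha+\nu,\beta+\nu>-1$.

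The step I expect to be the main obstacle is the middle one: checking that the single standing hypothesis $m_1+1\ge m_2$ on the final multi-index really does license every application of the reduction lemma along the chain --- i.e.\ that all the intermediate multi-indices $(m_1+m_2-j,j)$, $0\le j\le m_2-1$, satisfy the lemma's admissibility condition --- while keeping the parameter shift $\nu\to\nu+m_2\to\nu$ straight throughout. Once that is in place, the identification of $L_{\vec{n}}^{(I\mid\alpha,\beta)}$ with the degenerate mixed type function and the telescoping of the operators $A_c$ are routine.
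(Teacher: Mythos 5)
Your proposal is correct and follows essentially the same route as the paper: identify $L_{\vec{n}}^{(I\mid\alpha,\beta)}$ with the degenerate mixed type function of index $(m_1+m_2,0)$ and parameter $\nu+m_2$, apply the reduction lemma $m_2$ times, and telescope the first-order operators into a single $m_2$-fold derivative. Your explicit check that every intermediate index $(m_1+m_2-j,j)$ satisfies the lemma's admissibility condition under the standing hypothesis $m_1+1\geq m_2$ is a detail the paper leaves implicit, but the argument is the same.
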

\begin{proof}
It follows from the previous lemma that
    $$ G_{\vec{n},\vec{m}}^{\alpha,\beta,\nu}(x) \propto x^{-\nu} \frac{d}{dx}\left[ x^{\nu+1} G_{\vec{n},(m_1+1,m_2-1)}^{\alpha,\beta,\nu+1}(x) \right], $$
for $m_1+1\geq m_2$. After extracting $m_2-1$ additional derivatives, we thus get
    $$ G_{\vec{n},\vec{m}}^{\alpha,\beta,\nu}(x) \propto x^{-\nu} \frac{d^{m_2}}{dx^{m_2}}\left[ x^{m_2+\nu} G_{\vec{n},(m_1+m_2,0)}^{\alpha,\beta,\nu+m_2}(x) \right]. $$
The multi-index $(m_1+m_2,0)$ corresponds to the type I setting with $m_1+m_2=n_1+n_2-1$ orthogonality conditions and therefore $G_{\vec{n},(m_1+m_2,0)}^{\alpha,\beta,\nu+m_2}(x) \propto L_{\vec{n}}^{(I\mid\alpha,\beta)}(x)$.
\end{proof}

\subsubsection{Hypergeometric series}

The mixed type polynomials can be represented as ${}_3F_3$ hypergeometric polynomials in the following way.

\begin{prop}
The mixed type polynomials $C_{\vec{n},\vec{m}}(x)$ and $D_{\vec{n},\vec{m}}(x)$ in the mixed type function \eqref{mixedII_form} arise as the hypergeometric series
    $$ C_{\vec{n},\vec{m}}(x) = \mathcal{C}_{\vec{n},\vec{m}} \, {}_3F_3 \left( \begin{array}{c} -n_1+1,-n_2+\alpha-\beta+1,m_2+\alpha+\nu+1 \\ \alpha+1,\alpha-\beta+1,\alpha+\nu+1 \end{array}; x \right) ,$$
    $$ D_{\vec{n},\vec{m}}(x) = \mathcal{D}_{\vec{n},\vec{m}} \, {}_3F_3 \left( \begin{array}{c} -n_2+1,-n_1-\alpha+\beta+1,m_2+\beta+\nu+1 \\ \beta+1,-\alpha+\beta+1,\beta+\nu+1 \end{array}; x \right) ,$$
where
$$ \mathcal{C}_{\vec{n},\vec{m}} = \frac{(-1)^{n_1+n_2+1} (\alpha+\nu+1)_{m_2}}{\Gamma(\alpha+1) (\beta-\alpha)_{n_2} (n_1-1)!},\quad \mathcal{D}_{\vec{n},\vec{m}} = \frac{(-1)^{n_1+n_2+1} (\beta+\nu+1)_{m_2}}{\Gamma(\beta+1)(\alpha-\beta)_{n_1} (n_2-1)! }. $$
\end{prop}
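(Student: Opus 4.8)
The plan is to feed the explicit hypergeometric formulas for the type~I multiple Laguerre polynomials of the first kind into the differentiation formula \eqref{mixedII_form} and then invoke the differentiation formula \cite[Eq. 16.3.2]{DLMF} for generalized hypergeometric series, exactly as was done for the type~II polynomials in \eqref{II_2F2}. First I would write $L_{\vec{n}}^{(I\mid\alpha,\beta)}(x)=C_{n_1,n_2}(x)x^{\alpha}+D_{n_1,n_2}(x)x^{\beta}$ as in \eqref{defLI}, with $C_{n_1,n_2}$ and $D_{n_1,n_2}$ the ${}_2F_2$ polynomials computed above. Inserting this into \eqref{mixedII_form} and using linearity of $\tfrac{d^{m_2}}{dx^{m_2}}$ writes $G_{\vec{n},\vec{m}}$ as the sum
$$ x^{-\nu}\frac{d^{m_2}}{dx^{m_2}}\left[x^{m_2+\nu+\alpha}C_{n_1,n_2}(x)\right]+x^{-\nu}\frac{d^{m_2}}{dx^{m_2}}\left[x^{m_2+\nu+\beta}D_{n_1,n_2}(x)\right]. $$

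Next I would apply \cite[Eq. 16.3.2]{DLMF} to each term separately. For the first term, taking $\gamma=m_2+\nu+\alpha$ in that formula and differentiating $m_2$ times turns $x^{m_2+\nu+\alpha}$ times the ${}_2F_2$ of $C_{n_1,n_2}$ into $(\nu+\alpha+1)_{m_2}\,x^{\nu+\alpha}$ times a ${}_3F_3$, obtained by adjoining the upper parameter $m_2+\alpha+\nu+1$ and the lower parameter $\alpha+\nu+1$ to the ${}_2F_2$ of $C_{n_1,n_2}$; after multiplying by $x^{-\nu}$ this is precisely $x^{\alpha}$ times the stated ${}_3F_3$, and absorbing the Pochhammer factor $(\nu+\alpha+1)_{m_2}$ into the prefactor of $C_{n_1,n_2}$ produces the constant $\mathcal{C}_{\vec{n},\vec{m}}$. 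The second term is handled identically with $\beta$ in place of $\alpha$, giving $x^{\beta}$ times the second ${}_3F_3$ with constant $\mathcal{D}_{\vec{n},\vec{m}}$. Because the adjoined upper parameters $-n_1+1$ and $-n_2+1$ are non-positive integers, both ${}_3F_3$ series terminate and are polynomials of the required degrees $n_1-1$ and $n_2-1$.

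The one point deserving care is the legitimacy of identifying $C_{\vec{n},\vec{m}}$ and $D_{\vec{n},\vec{m}}$ from the resulting expression: the decomposition of $G_{\vec{n},\vec{m}}$ as a polynomial times $x^{\alpha}$ plus a polynomial times $x^{\beta}$ is unique since $\alpha-\beta\notin\Z$, so the two pieces computed above must be the component polynomials in \eqref{defG_MT}. One should also observe that the differentiation really does preserve the $x^{\alpha}$- and $x^{\beta}$-structure: under the standing assumption $\alpha+\nu,\beta+\nu>-1$, none of the exponents $\nu+\alpha+1,\dots,m_2+\nu+\alpha$ (respectively the analogous ones with $\beta$) is zero, so $\tfrac{d^{m_2}}{dx^{m_2}}[x^{m_2+\nu+\alpha}p(x)]$ equals $x^{\nu+\alpha}$ times a polynomial of the same degree as $p$, and no spurious terms appear. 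I do not expect a genuine obstacle here: the argument is routine bookkeeping of Pochhammer symbols, the substantive inputs being the hypergeometric representations of the type~I multiple Laguerre polynomials of the first kind obtained above and the differentiation formula \cite[Eq. 16.3.2]{DLMF}.
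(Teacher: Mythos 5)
Your proposal is correct and follows exactly the route the paper takes: the paper's proof is the one-line remark that the result ``follows from the hypergeometric series representations for the type I multiple Laguerre polynomials of the first kind and the differentiation properties of the generalized hypergeometric series,'' and your computation (splitting $L_{\vec{n}}^{(I\mid\alpha,\beta)}$ into its $x^\alpha$ and $x^\beta$ components, applying \cite[Eq.~16.3.2]{DLMF} with $\gamma=m_2+\nu+\alpha$ resp.\ $\gamma=m_2+\nu+\beta$, and matching the Pochhammer prefactors) is precisely the bookkeeping that remark leaves implicit. Your additional observations about the uniqueness of the $x^\alpha$/$x^\beta$ decomposition when $\alpha-\beta\notin\Z$ and the non-vanishing of $(\alpha+\nu+1)_{m_2}$ are sound and only make the argument more complete.
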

\begin{proof}
This follows from the hypergeometric series representations for the type I multiple Laguerre polynomials of the first kind and the differentiation properties of the generalized hypergeometric series.
\end{proof}

\section{Related multiple orthogonal polynomials}

The type I and II multiple orthogonal polynomials that were investigated here are related to the two-parameter subfamily of the Jacobi-Piñeiro polynomials in, e.g., \cite{SmetVA}, and the multiple orthogonal polynomials associated with $K$-Bessel functions in \cite{VAYakubovich}, with confluent hypergeometric functions in \cite{LimaLoureiro1} and with Gauss' hypergeometric functions in \cite{LimaLoureiro2}. In the latter, one considers multiple orthogonal polynomials on the step-line associated with two weights $\mathcal{W}(x;a,b,c,d)$ and $\mathcal{W}(x;a,b+1,c+1,d)$. The weights are defined in terms of Gauss’ hypergeometric function and have moments of the form
    $$ \int_0^\infty \mathcal{W}(x;a,b,c,d) x^k dx = \frac{(a+1)_k(b+1)_k}{(c+1)_k(d+1)_k}.$$
One assumes that $a,b,c,d\in\R_{>-1}$, $c+1,d>a$ and $c,d>b$ for regularity reasons.

The connection to the multiple orthogonal polynomials in this article is given in Figure \ref{fig1} below. It is based on the formulas for the Mellin transforms of the type I functions (where we might have had to rebalance the underlying weights) as given in Figure \ref{fig2}. 

\begin{figure}[ht]
    \centering
    \includegraphics[scale=0.33]{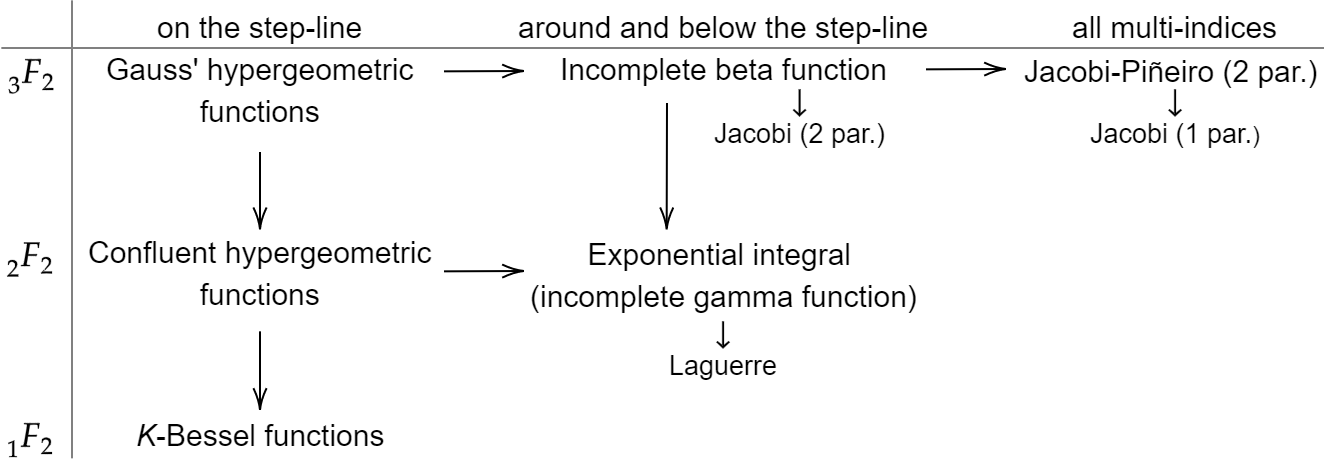}
    \caption{Connection to other weights.}
    \label{fig1}
\end{figure}

The vertical order is according to the type of hypergeometric function that is used in the expression for the type I functions (and the type II polynomials) which can be obtained by applying the inverse Mellin transform. Moreover, the asymptotic zero distribution of the type II multiple orthogonal polynomials on the step-line is the same on each row. Moving horizontally in the figure, one obtains formulas that are valid for a larger amount of multi-indices. Classical orthogonal polynomials appear first as degenerate cases of the multiple orthogonal polynomials in the second column.
\newpage

Figure \ref{fig2} suggests that in a setting with a weight that has moments of the form 
    $$ \frac{(a_1+1)_k\dots(a_r+1)_k}{(b_1+1)_k\dots(b_r+1)_k},$$
the Mellin transforms of the type I functions on the step-line will be given by
    $$ \frac{\Gamma(s+a_1)\dots\Gamma(s+a_r)}{\Gamma(s+b_1+n_1)\dots\Gamma(s+b_r+n_r)} (1-s)_{n_1+\dots n_r-1} $$
and that we have to consider $r$ weights in total to attain this. For an appropriate choice of weights, it can be verified that this is indeed the case. In that case, the type I function arises as an $r$-fold Mellin convolution in which Jacobi polynomials and beta densities appear. An application of the inverse Mellin transform then allows us to obtain a formula for the type I function in terms of $r$ hypergeometric functions of type $_{r+1}F_r$. From this, we may derive expressions for the type II polynomials as well.

\begin{figure}[h]
    \centering
    \includegraphics[scale=0.33]{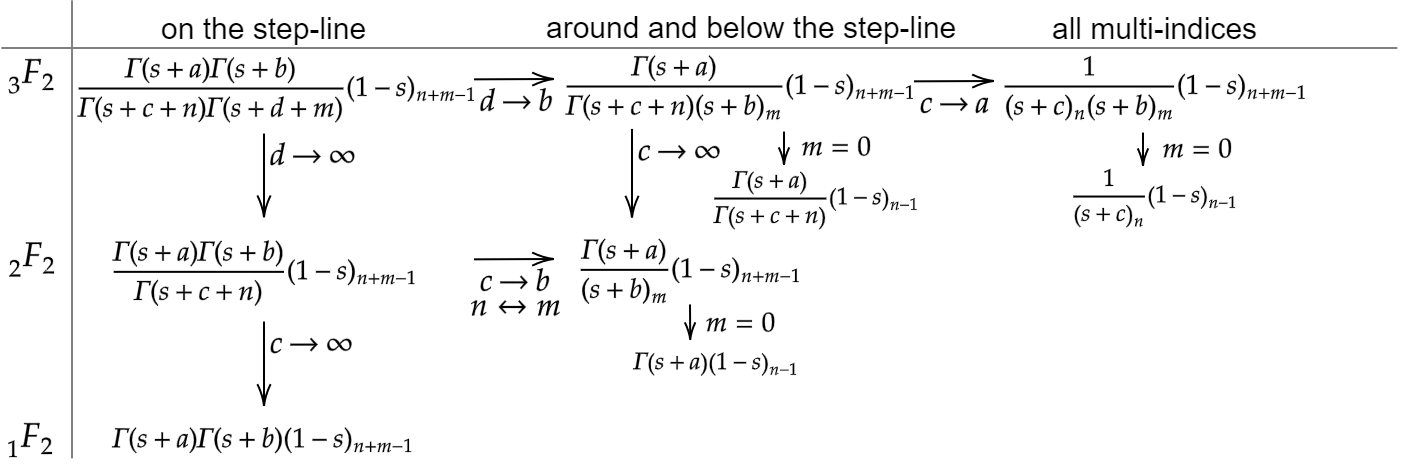}
    \caption{Underlying Mellin transforms.}
    \label{fig2}
\end{figure}
\medbreak

By taking suitable limits, it is also possible to consider weights with fewer Pochhammer symbols in the denominator. The associated multiple orthogonal polynomials will appear in the study of the singular values of (mixed) products of Ginibre matrices and truncated unitary matrices. The multiple orthogonal polynomials associated with weights with no Pochhammer in the denominator were already investigated in \cite{KuijlaarsZhang}, where they appeared in the study of products of Ginibre matrices. Weight functions for which the moments are ratios of Pochhammer symbols also appear naturally in the work of Sokal \cite{Sokal}, which makes a connection between multiple orthogonal polynomials, production matrices and branched continued fractions.

We may extend the multiple orthogonal polynomials in this article to multiple orthogonal polynomials with respect to more weights by considering additional exponential integral weights $(w_1,\dots,w_r)$, with $w_j(x) = x^\alpha E_{\nu_j+1}(x)$, that have parameters $\nu_1=-1$ (so that $w_1(x)=x^\alpha e^{-x}$) and $\nu_i - \nu_j \notin \mathbb{Z}$ whenever $i \neq j$. This system of weights will again be a Nikishin system under some additional restrictions on the parameters.

\newpage

\end{document}